\documentclass{imamci}

%
\jno{dnaaxxx}

\usepackage{comment} 
\usepackage{mathtools} 
  \setcounter{MaxMatrixCols}{20}

\usepackage{amsthm}
\usepackage{amssymb}

\usepackage{bbm} 
\usepackage{amsxtra}
\usepackage{amsfonts}
\usepackage{latexsym}
\usepackage[mathscr]{eucal}
  \makeindex

\usepackage{cancel} 
\usepackage{multicol}
\usepackage{mathdots}
\usepackage[shortlabels]{enumitem} 
  \setlist[enumerate]{leftmargin=*}
  \setlist[enumerate,1]{label={\textup{(\roman*)}}}
  \setlist[description]{font=\normalfont}
\usepackage{calc} 


\usepackage{nicefrac} 

\usepackage{subcaption} 

\usepackage[hidelinks]{hyperref}

\usepackage[
    backend=bibtex,
    style=numeric-comp,
    sorting=nyt,
    firstinits=true,
    doi=false,url=false,isbn=false,
    maxbibnames=99
 ]{biblatex}
    \addbibresource{MST.bib}
    

\usepackage{pdflscape}

\usepackage{longtable} 
\usepackage{tabu} 

\makeatletter
\renewcommand*\env@matrix[1][*\c@MaxMatrixCols c]{%
  \hskip -\arraycolsep
  \let\@ifnextchar\new@ifnextchar
  \array{#1}}
\makeatother

\usepackage{listings} 
\lstset{%
basicstyle=\scriptsize\ttfamily,
emph=[1]{if,else,function,end,while},
emphstyle=[1]\color{blue},
emph=[2]{size,null,colspace,eye,rank,min,error,zeros},
emphstyle=[2]\color{green!50!black},
stringstyle=\color{violet},
morestring=[b]',
showstringspaces=false,
}

\usepackage{tikz}
\usetikzlibrary{fit}
\usetikzlibrary{calc}
\pgfdeclarelayer{background}
\pgfsetlayers{background,main}
\colorlet{inbox}{lightgray!20}
\colorlet{outbox}{lightgray!50}



%
%
%

%


\usepackage{thmtools}
\declaretheoremstyle[headfont=\normalfont\bfseries]{bfthmstyle}
\declaretheorem[numberwithin=section,style=bfthmstyle]{Theorem}
\declaretheorem[sharenumber=Theorem,style=bfthmstyle]{Lemma}
\declaretheorem[sharenumber=Theorem,style=bfthmstyle]{Proposition}

\declaretheoremstyle[headfont=\normalfont\bfseries,qed={$\diamond$}]{bfdefstyle}
\declaretheorem[sharenumber=Theorem,style=bfdefstyle]{Definition}
\declaretheorem[sharenumber=Theorem,style=bfdefstyle]{Example}

\declaretheoremstyle[headfont=\normalfont\bfseries,qed={$\diamond$}]{bfremstyle}
\declaretheorem[sharenumber=Theorem,style=bfdefstyle]{Remark}

\makeatletter
\newenvironment{Proof}[1][\proofname]{\par
  \pushQED{\hfill$\square$}%
  \normalfont \topsep6\p@\@plus6\p@\relax
  \trivlist
  \item[\hskip\labelsep
        \itshape
    #1\@addpunct{.}]\ignorespaces
}{%
  \popQED\endtrivlist\@endpefalse
}
\makeatother






\newcommand{\mb}[1]{\mathbb{#1}}

\DeclareMathOperator{\rank}{rank}

\newcommand{\R}{\mb{R}}

\newcommand{\C}{\mb{C}}

\newcommand{\N}{{\mb{N}}}





\newcommand{\cV}{\mathcal{V}}
\newcommand{\cW}{\mathcal{W}}

\newcommand{\bsalpha}{{\boldsymbol{\alpha}}}

\newcommand{\bsdelta}{{\boldsymbol{\delta}}}
\newcommand{\bsbeta}{{\boldsymbol{\beta}}}
\newcommand{\bsgamma}{{\boldsymbol{\gamma}}}
\newcommand{\bskappa}{{\boldsymbol{\kappa}}}


\newcommand{\setdef}[2]{\left\{\ #1\ \left|\ \vphantom{#1} #2\ \right.\right\}}

\newcommand{\ddt}{\tfrac{\text{\normalfont d}}{\text{\normalfont d}t}}

\renewcommand{\imath}{\mathrm{i}}

\DeclareMathOperator{\diag}{diag}
\DeclareMathOperator{\rk}{rk}
\DeclareMathOperator{\im}{im}



{\left(\begin{smallmatrix}}
{\end{smallmatrix}\right)}

\newenvironment{smallbmatrix}
{\left[\begin{smallmatrix}}
{\end{smallmatrix}\right]}


\newcommand{\Gl}{\GL}
\newcommand{\GL}{GL}


\newlength{\innersep}
\newlength{\maxlength}
\newlength{\dummylength}

\newcommand{\JordanBlock}[3]{
\setlength{\arraycolsep}{0pt}
\renewcommand{\arraystretch}{0}
\settowidth{\maxlength}{$#1$}
\settoheight{\dummylength}{$#1$}
\ifdim\dummylength>\maxlength
  \setlength{\maxlength}{\dummylength}
\fi
\settowidth{\dummylength}{$#2$}
\ifdim\dummylength>\maxlength
  \setlength{\maxlength}{\dummylength}
\fi
\settoheight{\dummylength}{$#2$}
\ifdim\dummylength>\maxlength
  \setlength{\maxlength}{\dummylength}
\fi
\setlength{\innersep}{0.1\maxlength}
\addtolength{\maxlength}{\innersep}
\addtolength{\maxlength}{\innersep}
\newcommand{\invisiblebox}{\phantom{\rule{\maxlength}{\maxlength}}}
\begin{array}{cccc}
  \tikz[remember picture] \node[outer sep=0,inner sep=\innersep] (a11) {$#1$}; &{\tikz[remember picture] \node[outer sep=0,inner sep=\innersep] (a21) {$#2$};}& & \invisiblebox\\
  &&\phantom{\rule{#3}{#3}} &\\
   \invisiblebox &\invisiblebox& &{\tikz[remember picture] \node[outer sep=0,inner sep=\innersep] (a43) {$#2$};} \\
   \invisiblebox &\invisiblebox& &
   {\tikz[remember picture] \node[outer sep=0,inner sep=\innersep] (a44) {$#1$};}
\end{array}
\tikz[remember picture, overlay] \draw (a11) edge[very thick] (a44);
\tikz[remember picture, overlay] \draw (a21) edge[very thick] (a43);
}

\newcommand{\LowerNilBlock}[3]{
\setlength{\arraycolsep}{0pt}
\renewcommand{\arraystretch}{0}
\settowidth{\maxlength}{$#1$}
\settoheight{\dummylength}{$#1$}
\ifdim\dummylength>\maxlength
  \setlength{\maxlength}{\dummylength}
\fi
\settowidth{\dummylength}{$#2$}
\ifdim\dummylength>\maxlength
  \setlength{\maxlength}{\dummylength}
\fi
\settoheight{\dummylength}{$#2$}
\ifdim\dummylength>\maxlength
  \setlength{\maxlength}{\dummylength}
\fi
\setlength{\innersep}{0.1\maxlength}
\addtolength{\maxlength}{\innersep}
\addtolength{\maxlength}{\innersep}
\newcommand{\invisiblebox}{\phantom{\rule{\maxlength}{\maxlength}}}
\begin{array}{cccc}
  \tikz[remember picture] \node[outer sep=0,inner sep=\innersep] (a11) {$#1$}; &  & \invisiblebox & \invisiblebox\\
  {\tikz[remember picture] \node[outer sep=0,inner sep=\innersep] (a21) {$#2$};}&  & \invisiblebox & \invisiblebox\\
   & \phantom{\rule{#3}{#3}} &  & \\
   \invisiblebox & &  {\tikz[remember picture] \node[outer sep=0,inner sep=\innersep] (a43) {$#2$};} &
   {\tikz[remember picture] \node[outer sep=0,inner sep=\innersep] (a44) {$#1$};}
\end{array}
\tikz[remember picture, overlay] \draw (a11) edge[very thick] (a44);
\tikz[remember picture, overlay] \draw (a21) edge[very thick] (a43);
}

\newcommand{\UpperNilBlock}[3]{
\setlength{\arraycolsep}{0pt}
\renewcommand{\arraystretch}{0}
\settowidth{\maxlength}{$#1$}
\settoheight{\dummylength}{$#1$}
\ifdim\dummylength>\maxlength
  \setlength{\maxlength}{\dummylength}
\fi
\settowidth{\dummylength}{$#2$}
\ifdim\dummylength>\maxlength
  \setlength{\maxlength}{\dummylength}
\fi
\settoheight{\dummylength}{$#2$}
\ifdim\dummylength>\maxlength
  \setlength{\maxlength}{\dummylength}
\fi
\setlength{\innersep}{0.1\maxlength}
\addtolength{\maxlength}{\innersep}
\addtolength{\maxlength}{\innersep}
\newcommand{\invisiblebox}{\phantom{\rule{\maxlength}{\maxlength}}}
\begin{array}{cccc}
  \tikz[remember picture]{\node[outer sep=0,inner sep=\innersep] (a11) {$#1$};} & \tikz[remember picture]{\node[outer sep=0,inner sep=\innersep] (a21) {$#2$};}  &  & \invisiblebox\\
  &  & \phantom{\rule{#3}{#3}} &  \\
  \invisiblebox & \invisiblebox & & \tikz[remember picture]{\node[outer sep=0,inner sep=\innersep] (a43) {$#2$};}\\
   \invisiblebox & \invisiblebox &   & \tikz[remember picture]{\node[outer sep=0,inner sep=\innersep] (a44) {$#1$};}
\end{array}
\tikz[remember picture, overlay] \draw (a11) edge[very thick] (a44);
\tikz[remember picture, overlay] \draw (a21) edge[very thick] (a43);
}

\newcommand{\SmallNilBlock}[3]{
\LowerNilBlock{\mbox{\scriptsize $#1$}}{\mbox{\scriptsize $#2$}}{#3}
}

\newcommand{\RectBlock}[3]{
\setlength{\arraycolsep}{0pt}
\renewcommand{\arraystretch}{0}
\settowidth{\maxlength}{$#1$}
\settoheight{\dummylength}{$#1$}
\ifdim\dummylength>\maxlength
  \setlength{\maxlength}{\dummylength}
\fi
\settowidth{\dummylength}{$#2$}
\ifdim\dummylength>\maxlength
  \setlength{\maxlength}{\dummylength}
\fi
\settoheight{\dummylength}{$#2$}
\ifdim\dummylength>\maxlength
  \setlength{\maxlength}{\dummylength}
\fi
\setlength{\innersep}{0.1\maxlength}
\addtolength{\maxlength}{\innersep}
\addtolength{\maxlength}{\innersep}
\newcommand{\invisiblebox}{\phantom{\rule{\maxlength}{\maxlength}}}
\begin{array}{ccccc}
  \tikz[remember picture] \node[outer sep=0,inner sep=\innersep] (a11) {$#1$}; &{\tikz[remember picture] \node[outer sep=0,inner sep=\innersep] (a21) {$#2$};}& & \invisiblebox & \invisiblebox\\
  &&\phantom{\rule{#3}{#3}} &&\\
   \invisiblebox &\invisiblebox& &
   {\tikz[remember picture] \node[outer sep=0,inner sep=\innersep] (a44) {$#1$};} & {\tikz[remember picture] \node[outer sep=0,inner sep=\innersep] (a43) {$#2$};}
\end{array}
\tikz[remember picture, overlay] \draw (a11) edge[very thick] (a44);
\tikz[remember picture, overlay] \draw (a21) edge[very thick] (a43);
}

\newcommand{\RectBlockT}[3]{
\setlength{\arraycolsep}{0pt}
\renewcommand{\arraystretch}{0}
\settowidth{\maxlength}{$#1$}
\settoheight{\dummylength}{$#1$}
\ifdim\dummylength>\maxlength
  \setlength{\maxlength}{\dummylength}
\fi
\settowidth{\dummylength}{$#2$}
\ifdim\dummylength>\maxlength
  \setlength{\maxlength}{\dummylength}
\fi
\settoheight{\dummylength}{$#2$}
\ifdim\dummylength>\maxlength
  \setlength{\maxlength}{\dummylength}
\fi
\setlength{\innersep}{0.1\maxlength}
\addtolength{\maxlength}{\innersep}
\addtolength{\maxlength}{\innersep}
\newcommand{\invisiblebox}{\phantom{\rule{\maxlength}{\maxlength}}}
\begin{array}{ccccc}
  \tikz[remember picture]{\node[outer sep=0,inner sep=\innersep] (a11) {$#1$};} & & \invisiblebox\\
  \tikz[remember picture]{\node[outer sep=0,inner sep=\innersep] (a21) {$#2$};} & & \invisiblebox\\
  &\phantom{\rule{#3}{#3}} &\\
   \invisiblebox & & \tikz[remember picture]{\node[outer sep=0,inner sep=\innersep] (a44) {$#1$};}\\
   \invisiblebox & & \tikz[remember picture]{\node[outer sep=0,inner sep=\innersep] (a43) {$#2$};}
\end{array}
\tikz[remember picture, overlay] \draw (a11) edge[very thick] (a44);
\tikz[remember picture, overlay] \draw (a21) edge[very thick] (a43);
}

\newcommand{\SmallRectBlock}[3]{
\RectBlock{\mbox{\scriptsize $#1$}}{\mbox{\scriptsize $#2$}}{#3}
}

\newcommand{\QBlock}[3]{
\setlength{\arraycolsep}{0pt}
\renewcommand{\arraystretch}{0}
\settowidth{\maxlength}{$#1$}
\settoheight{\dummylength}{$#1$}
\ifdim\dummylength>\maxlength
  \setlength{\maxlength}{\dummylength}
\fi
\settowidth{\dummylength}{$#2$}
\ifdim\dummylength>\maxlength
  \setlength{\maxlength}{\dummylength}
\fi
\settoheight{\dummylength}{$#2$}
\ifdim\dummylength>\maxlength
  \setlength{\maxlength}{\dummylength}
\fi
\setlength{\innersep}{0.1\maxlength}
\addtolength{\maxlength}{\innersep}
\addtolength{\maxlength}{\innersep}
\newcommand{\invisiblebox}{\phantom{\rule{\maxlength}{\maxlength}}}
\begin{array}{ccc}
  \tikz[remember picture] \node[outer sep=0,inner sep=\innersep] (a11) {$#1$}; &   & \invisiblebox\\
  {\tikz[remember picture] \node[outer sep=0,inner sep=\innersep] (a21) {$#2$};}&   & \invisiblebox\\
   & \phantom{\rule{#3}{#3}} &  \\
   \invisiblebox & &
   {\tikz[remember picture] \node[outer sep=0,inner sep=\innersep] (a44) {$#1$};}\\
     \invisiblebox & & {\tikz[remember picture] \node[outer sep=0,inner sep=\innersep] (a43) {$#2$};}
\end{array}
\tikz[remember picture, overlay] \draw (a11) edge[very thick] (a44);
\tikz[remember picture, overlay] \draw (a21) edge[very thick] (a43);
}

\usepackage{afterpage}
\usepackage{xcolor}
\usepackage{pagecolor}
\definecolor{carolinablue}{rgb}{0.6, 0.73, 0.89}
\definecolor{bluegray}{rgb}{0.4, 0.6, 0.8}
\definecolor{darkpastelblue}{rgb}{0.47, 0.62, 0.8}        
\definecolor{glaucous}{rgb}{0.38, 0.51, 0.71}
\definecolor{almond}{rgb}{0.94, 0.87, 0.8}
\definecolor{beige}{rgb}{0.96, 0.96, 0.86}
\definecolor{darkchampagne}{rgb}{0.76, 0.7, 0.5}
\definecolor{champagne}{rgb}{0.97, 0.91, 0.81}
\definecolor{wheat}{rgb}{0.96, 0.87, 0.7}
\definecolor{almond}{rgb}{0.94, 0.87, 0.8}
\definecolor{blanchedalmond}{rgb}{1.0, 0.92, 0.8}
\newenvironment{smallarray}[1]
 {\null\,\vcenter\bgroup\scriptsize
  \renewcommand{\arraystretch}{0.7}%
  \arraycolsep=.13885em
  \hbox\bgroup$\array{@{}#1@{}}}
 {\endarray$\egroup\egroup\,\null}

\numberwithin{equation}{section}

\newcommand{\myblue}[1]{{\color{blue}#1}}

\sloppy

\begin{document}

\title{Quasi feedback forms for differential-algebraic systems}
\author{ {\sc Thomas Berger}\\[2pt]
Institut f\"ur Mathematik, Universit\"at Paderborn, Warburger Str.~100, 33098~Paderborn, Germany, {\tt thomas.berger@math.upb.de}\\[6pt]
{\sc Achim Ilchmann}\\[2pt]
Institut f\"ur Mathematik, Technische Universit\"{a}t Ilmenau,
                          Weimarer Stra{\ss}e~25, 98693 Ilmenau,
            Germany, {\tt achim.ilchmann@tu-ilmenau.de}\\[6pt]
{\sc Stephan Trenn}\\[2pt]
Systems, Control and Applied Analysis -- Bernoulli Institute, University of Groningen, Nijenborgh~9, 9747~AG~Groningen,
            The Netherlands, {\tt s.trenn@rug.nl}\vspace*{6pt}}

\markboth{\sc T. Berger et al.}{\sc Quasi feedback-forms for differential-algebraic systems}
\pagestyle{headings}

\maketitle


\begin{abstract}
{{
We investigate feedback forms for linear time-invariant systems described by differential-algebraic
equations.
Feedback forms are representatives of certain equivalence classes.
For example  state space transformations, invertible transformations from the
left, and proportional state feedback constitute an equivalence relation.
The representative of  such an equivalence class,
which we call proportional feedback form for the above example,
 allows to read off
relevant system theoretic properties.
Our main contribution is to derive a
\textit{quasi} proportional feedback form.
This form is advantageous since it provides some geometric insight and is simple to compute, but
still allows to read off  the relevant structural properties of the control system.
We also derive a \textit{quasi} proportional and derivative feedback form.
Similar  advantages hold.
}}

{{Differential-algebraic systems, descriptor systems, feedback forms, Wong sequences}}
\end{abstract}

\noindent
\textbf{Dedication}
\\
We dedicate  the  present note to the  memory of Nicos Karcanias~--
a friend  and  colleague.
Nicos  has had a fundamental impact in diverse areas of
\emph{systems and control theory},
in particular in
\emph{matrix pencil theory of linear systems}.
He has had a special interest in \textit{system structure},
leading to invariants and canonical forms.
See for example the very early paper~\cite{KarcKouv79}
and many more.
Our present note is in this spirit. Some of our results are closely
related to his  seminal work~\cite{LoisOzca91}.
We will refer to  this in due place.

\section{Introduction}

We study \emph{structured} matrix pencils of the form~$s[E,0] - [A,B]$ with $E,A\in\R^{\ell\times n}$ and $B\in\R^{\ell\times m}$ for which we write $[E,A,B]\in \Sigma_{\ell,n,m}$. Such pencils are typically associated with differential-algebraic control systems of the form
\begin{equation}\label{eq:EAB}
 E \dot x(t) = Ax(t)+Bu(t).
\end{equation}

An essential difference between  differential-algebraic systems~\eqref{eq:EAB}
and ordinary differential systems (by this  we mean~\eqref{eq:EAB}
with  square and invertible~$E\in\R^{n\times n}$)
is  their  solution  behaviour (see~\cite{Tren13a})
and  the different  controllability concepts (see~\cite{BergReis13a}).
To address various  control problems of    systems~\eqref{eq:EAB},
the well known
(quasi) Kronecker form (cf.~\cite{Kron90,Gant59d,BergTren12})
of the augmented pencil~$s[E,0] - [A,B]$
is not ``good enough''. A finer structure, which takes into
account the input matrix~$B$,  is  required.
This is achieved by     ``(quasi) canonical'' (feedback) forms.

A very early contribution in this spirit is  by Nicos   Karcanias and coworkers~\cite{LoisOzca91};  we will explain their  achievements in due course.
Our approach  to ``(quasi) canonical'' forms is  via
 augmented Wong sequences; this tool  is fundamental and introduced in Section~\ref{Sec:AugWong}.

In Section~\ref{Ssec:PFF}, we allow  for state space transformations of~\eqref{eq:EAB},
 invertible transformations from the  left, and proportional state feedback,
where the  latter means  to add the algebraic relation $u(t) = F_Px(t) + v(t)$ to the system~\eqref{eq:EAB}  for~$F_P\in\R^{m\times n}$ and~$v$ as new input. These transformations  constitute an equivalence relation and
the representatives of the equivalence classes are called P-feedback forms.
We derive a \textit{quasi} P-feedback form which~-- when compared to
the previous form~-- has the advantages that it provides some geometric insight and is simpler to compute,
but still allows to read off the most relevant structural properties of the control system.

In Section~\ref{Ssec:PDF-form},
 we extend the class of allowed transformations by also considering derivative feedback, i.e.\ $u(t) = F_P x(t) + F_D \dot{x}(t) + v(t)$. Again, we derive a \textit{quasi} form for the corresponding equivalence relation.

\section{Augmented Wong sequences} \label{Sec:AugWong}
%

Wong sequences have been introduced as a fundamental geometric tool for the analysis of matrix pencils and the derivation of quasi canonical forms~-- see~\cite{BergIlch12a,BergTren12,BergTren13}.
This approach has been extended to control systems $[E,A,B]\in\Sigma_{\ell,n,m}$ to derive a Kalman controllability decomposition~-- see~\cite{BergReis13a,BergTren14}.  Compared to matrix pencils $sE-A\in\R[s]^{\ell\times n}$, the augmented pencil $[sE-A,-B]$ with $B\in\R^{\ell\times m}$ contains additional independent variables, which are typically associated with the input of the control system~\eqref{eq:EAB}\footnote{Of course, input constraints may be present, but we ignore this for purpose of motivation.}. We like to emphasize that the augmented Wong sequences are projections of the Wong sequences corresponding to the augmented matrix pencil $s[E,0]-[A,B]$ as shown in Proposition~\ref{Prop:AWS_properties}.

The augmented Wong limits are related to the concepts of reachable and controllable spaces for the~DAE control system $[E,A,B]\in\Sigma_{\ell,n,m}$. These spaces are some of the most important notions for~(DAE) control systems and have been considered in~\cite{Lewi86} for regular systems. Further usage of these concepts can be found in the following: in~\cite{OzcaLewi89} generalized reachable and controllable subspaces of regular systems are considered; Eliopoulou and Karcanias~\cite{ElioKarc95} consider reachable and almost reachable subspaces of general~DAE systems; Frankowska~\cite{Fran90} considers the reachable subspace in terms of differential inclusions. However, to the best of our knowledge, the interplay between the (augmented) Wong-sequences and (quasi) canonical forms has not been investigated so far and
the  present contribution aims to close this gap.


\begin{Definition}[Augmented Wong sequences]\label{Def:AWS}
Let $[E,A,B]\in \Sigma_{\ell,n,m}$.
The sequences $(\cV_{[E,A,B]}^i)_{i\in\N_0}$ and $(\cW_{[E,A,B]}^i)_{i\in\N_0}$  defined as\footnote{Note that in this work $A^{-1}$ and $E^{-1}$ denote the preimage of the respective space under the induced linear map}, and not the matrix inverse (whose existence is not assumed here; in fact, $E$ and $A$ are not even assumed to be square).
\[
   \begin{aligned}
   \cV_{[E,A,B]}^0&:=\R^n,&\quad \cV_{[E,A,B]}^{i+1}&:=A^{-1}(E\cV_{[E,A,B]}^i + \im  B)\subseteq \R^n,\\
   \cW_{[E,A,B]}^0&:=\{0\},&\quad \cW_{[E,A,B]}^{i+1}&:=E^{-1}(A\cW_{[E,A,B]}^i + \im  B)\subseteq \R^n,
   \end{aligned}
\]
are called \emph{augmented Wong sequences} and
\[
   \cV^*_{[E,A,B]}  :=  \bigcap_{i\in\N_0} \cV_{[E,A,B]}^i,\qquad
    \cW^*_{[E,A,B]} := \bigcup_{i\in\N_0} \cW_{[E,A,B]}^i,
\]
are called the \emph{augmented Wong limits}.
\end{Definition}


We highlight some important properties of the above defined sequences.

\begin{Proposition}[Properties of augmented Wong sequences]\label{Prop:AWS_properties}
Consider $[E,A,B]\in\Sigma_{\ell,n,m}$ with their augmented Wong sequences $(\cV_{[E,A,B]}^i)_{i\in\N_0}$ and $(\cW_{[E,A,B]}^i)_{i\in\N_0}$. Then we have:
\begin{enumerate}[(a)]
\item
The sequences are nested and terminate, i.e., there exist~$i^*,j^*\leq n$ such that, for all~$i,j\in\N$,
\begin{subequations}\label{eq:AWS_nested}
\begin{align}
&\cV^0_{[E,A,B]} \supsetneq \cV^1_{[E,A,B]} \supsetneq \cdots   \supsetneq \cV^{i^*}_{[E,A,B]} = \cV^{i^*+i}_{[E,A,B]}
= \cV^*_{[E,A,B]} = A^{-1}(E\cV^*_{[E,A,B]} + \im  B),\label{eq:AWS_nested1}\\
 &\cW^0_{[E,A,B]} \subsetneq \cW^1_{[E,A,B]} \subsetneq \cdots  \subsetneq \cW^{j^*}_{[E,A,B]} = \cW^{j^*+j}_{[E,A,B]}
= \cW^*_{[E,A,B]} = E^{-1}(A\cW^*_{[E,A,B]} + \im  B),\label{eq:AWS_nested2}
\end{align}
\end{subequations}
and hence their limits are well defined.

\item
The augmented Wong limits $\cV^*_{[E,A,B]},\cW^*_{[E,A,B]}\subseteq\R^n$ are linear subspaces and satisfy
\begin{equation}\label{eq:AWS_invariance}
    \begin{aligned}
    E \cW^*_{[E,A,B]} &\subseteq A\cW^*_{[E,A,B]} + \im B,\\
    A\cV^*_{[E,A,B]} &\subseteq E\cV^*_{[E,A,B]} + \im B,\\
    \end{aligned}
\end{equation}
\begin{equation}\label{eq:E(VcapW)_A(VcapW)}
\begin{aligned}
  E(\cV^*_{[E,A,B]} \cap \cW^*_{[E,A,B]}) &= E\cV^*_{[E,A,B]}\cap (A\cW^*_{[E,A,B]}+\im B),\\
  A(\cV^*_{[E,A,B]} \cap \cW^*_{[E,A,B]}) &= (E\cV^*_{[E,A,B]} + \im B) \cap A\cW^*_{[E,A,B]},
  \end{aligned}
\end{equation}
and
\begin{multline}\label{eq:EVBcapAWB}
  \hspace*{-3mm} E(\cV^*_{[E,A,B]}\cap\cW^*_{[E,A,B]})+\im B
  \ = \
  (E\cV^*_{[E,A,B]}+\im B)\cap (A\cW^*_{[E,A,B]}+\im B)\\
  = A(\cV^*_{[E,A,B]}\cap\cW^*_{[E,A,B]})+\im B.
\end{multline}
\item The augmented Wong limits $\cV^*_{[E,A,B]},\cW^*_{[E,A,B]}\subseteq\R^n$ are related to the Wong limits $\cV^*_{[[E,0],[A,B],0]}$, $\cW^*_{[[E,0],[A,B],0]}\subseteq\R^{n+m}$ of the augmented pencil $s[E,0]-[A,B]$ as follows:
\begin{equation}\label{eq:augWong_WongAug}
   \cV^*_{[E,A,B]} = [I_n,0]\cdot \cV^*_{[[E,0],[A,B],0]}\qquad\text{and}\qquad \cW^*_{[E,A,B]} = [I_n,0]\cdot \cW^*_{[[E,0],[A,B],0]}.
\end{equation}
\end{enumerate}
\end{Proposition}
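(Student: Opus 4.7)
The plan is to verify parts (a), (b), (c) in order, relying on monotonicity of preimages and on the fixed-point equations satisfied by the limits.

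For part (a), I argue by induction that the chains are nested. For $(\cV^i_{[E,A,B]})$, the inclusion $\cV^1_{[E,A,B]}\subseteq\R^n=\cV^0_{[E,A,B]}$ is automatic, and assuming $\cV^{i+1}_{[E,A,B]}\subseteq\cV^i_{[E,A,B]}$, monotonicity of $V\mapsto A^{-1}(EV+\im B)$ yields $\cV^{i+2}_{[E,A,B]}\subseteq\cV^{i+1}_{[E,A,B]}$. An analogous increasing induction, based on $\cW^0_{[E,A,B]}=\{0\}\subseteq\cW^1_{[E,A,B]}$, handles $(\cW^i_{[E,A,B]})$. Finite-dimensionality forces termination within $n$ steps, and once the recursion stabilizes every further iteration preserves the subspace, producing the fixed-point identities in \eqref{eq:AWS_nested1}--\eqref{eq:AWS_nested2}.

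For part (b), the inclusions in \eqref{eq:AWS_invariance} follow at once from the fixed-point identities of (a). For \eqref{eq:E(VcapW)_A(VcapW)} I would use the characterizations $v\in\cV^*_{[E,A,B]}\iff Av\in E\cV^*_{[E,A,B]}+\im B$ and $w\in\cW^*_{[E,A,B]}\iff Ew\in A\cW^*_{[E,A,B]}+\im B$. The inclusion $\subseteq$ is immediate from \eqref{eq:AWS_invariance}; for $\supseteq$ in the first identity, any $v\in\cV^*_{[E,A,B]}$ with $Ev\in A\cW^*_{[E,A,B]}+\im B$ is forced into $\cW^*_{[E,A,B]}$ by the $\cW$-characterization, hence into $\cV^*_{[E,A,B]}\cap\cW^*_{[E,A,B]}$. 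The second identity follows by swapping the roles of $E,\cV$ and $A,\cW$. For \eqref{eq:EVBcapAWB}, any $y$ in the middle set has representations $y=Ev+Bu_1=Aw+Bu_2$; subtracting yields $Ev\in A\cW^*_{[E,A,B]}+\im B$ and $Aw\in E\cV^*_{[E,A,B]}+\im B$, which force $v,w\in\cV^*_{[E,A,B]}\cap\cW^*_{[E,A,B]}$, while the reverse inclusions follow again from \eqref{eq:AWS_invariance}.

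For part (c), I treat the two limits separately. The $\cV$-identity admits the step-by-step equality $[I_n,0]\cdot\cV^i_{[[E,0],[A,B],0]}=\cV^i_{[E,A,B]}$ by an induction using $[E,0]\cV^i_{[[E,0],[A,B],0]}=E\cdot([I_n,0]\cV^i_{[[E,0],[A,B],0]})$ together with the equivalence $\exists u\colon Ax+Bu\in E\cV^i_{[E,A,B]}\iff Ax\in E\cV^i_{[E,A,B]}+\im B$. Step-by-step equality fails for the $\cW$-sequence already at $i=1$, where $[I_n,0]\cW^1_{[[E,0],[A,B],0]}=\ker E$ whereas $\cW^1_{[E,A,B]}=E^{-1}(\im B)$, so I instead establish two separate inclusions that collapse in the limit. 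The routine direction is $[I_n,0]\cW^i_{[[E,0],[A,B],0]}\subseteq\cW^i_{[E,A,B]}$. For the shifted reverse inclusion $\cW^i_{[E,A,B]}\subseteq[I_n,0]\cW^{i+1}_{[[E,0],[A,B],0]}$, I exploit that $\{0\}\times\R^m\subseteq\cW^1_{[[E,0],[A,B],0]}\subseteq\cW^{i+1}_{[[E,0],[A,B],0]}$: if $Ex=Aw+Bv$ with $w\in\cW^i_{[E,A,B]}$, the induction hypothesis yields $u'$ with $(w,u')\in\cW^{i+1}_{[[E,0],[A,B],0]}$, and adding $(0,v-u')$ gives $(w,v)\in\cW^{i+1}_{[[E,0],[A,B],0]}$, whence $(x,0)\in\cW^{i+2}_{[[E,0],[A,B],0]}$. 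The main subtlety is exactly this $\cW$-case: because $[E,0]$ has a null direction along the $u$-coordinate while $[A,B]$ does not, step-by-step equality fails and one must use the containment $\{0\}\times\R^m\subseteq\cW^1_{[[E,0],[A,B],0]}$ at the cost of an index shift that disappears upon passing to the limit.
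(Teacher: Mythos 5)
Your proof is correct. Parts (a) and (b) follow essentially the paper's route: (a) is the same routine monotonicity-plus-dimension argument the paper omits as straightforward, and in (b) your derivation of \eqref{eq:E(VcapW)_A(VcapW)} and \eqref{eq:EVBcapAWB} is the same argument as the paper's, except that where the paper passes through an auxiliary element $\overline{w}\in\cW^*_{[E,A,B]}$ with $E\overline{w}=Aw+Bu$ and then invokes $\ker E\subseteq\cW^*_{[E,A,B]}$, you apply the fixed-point characterization $\cW^*_{[E,A,B]}=E^{-1}(A\cW^*_{[E,A,B]}+\im B)$ directly to conclude $v\in\cW^*_{[E,A,B]}$ from $Ev\in A\cW^*_{[E,A,B]}+\im B$; this is a mild streamlining of the same idea. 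The genuine added value is in part (c), which the paper disposes of by citing an external lemma: you give a self-contained induction, and you correctly identify the one nontrivial point, namely that $[I_n,0]\cW^i_{[[E,0],[A,B],0]}=\cW^i_{[E,A,B]}$ fails termwise (already $[I_n,0]\cW^1_{[[E,0],[A,B],0]}=\ker E$ versus $\cW^1_{[E,A,B]}=E^{-1}(\im B)$), so that one must prove the two one-sided inclusions $[I_n,0]\cW^i_{[[E,0],[A,B],0]}\subseteq\cW^i_{[E,A,B]}\subseteq[I_n,0]\cW^{i+1}_{[[E,0],[A,B],0]}$, using $\{0\}\times\R^m\subseteq\cW^1_{[[E,0],[A,B],0]}$ to absorb the input component, and let the index shift vanish in the union defining the limit. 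This is exactly the mechanism hidden behind the paper's citation, and your $\cV$-case (where termwise equality does hold) is likewise correct.
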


\begin{Proof} ~\\[-4ex]
\begin{enumerate}[(a)]
 \item
The proof of this statement is straightforward
and hence omitted.

  \item
  By~\eqref{eq:AWS_nested} the two relations in~\eqref{eq:AWS_invariance} follow, which in turn immediately yield the subset inclusion ``$\subseteq$'' of
  the equations in~\eqref{eq:E(VcapW)_A(VcapW)}. To show the converse inclusions, let $z\in E\cV^*_{[E,A,B]}\cap (A\cW^*_{[E,A,B]}+\im B)$.
  Then there exist $v\in\cV^*_{[E,A,B]}$, $w\in\cW^*_{[E,A,B]}$, and~$u\in\R^m$ such that
  \[
      Ev = z = Aw + Bu.
  \]
  By~\eqref{eq:AWS_nested} we have $E\cW^*_{[E,A,B]} = (A\cW^*_{[E,A,B]}+\im B) \cap \im E$ and since $Aw+Bu\in(A\cW^*_{[E,A,B]}+\im B) \cap \im E$ there exists $\overline{w}\in\cW^*_{[E,A,B]}$ such that $E\overline{w} = Aw+Bu$ and hence $z=Ev=E\overline{w}$. Therefore, $v - \overline{w} \in \ker E \subseteq \cW^*_{[E,A,B]}$, which gives $v\in \cV^*_{[E,A,B]} \cap \cW^*_{[E,A,B]}$, thus $z = Ev \in E(\cV^*_{[E,A,B]}\cap\cW^*_{[E,A,B]})$. This shows $E\cV^*_{[E,A,B]}\cap (A\cW^*_{[E,A,B]}+\im B)\subseteq E(\cV^*_{[E,A,B]} \cap \cW^*_{[E,A,B]})$. The inclusion $(E\cV^*_{[E,A,B]} + \im B) \cap A\cW^*_{[E,A,B]}\subseteq A(\cV^*_{[E,A,B]} \cap \cW^*_{[E,A,B]})$ can be shown
  similarly and its proof is omitted.
\\
  We show~\eqref{eq:EVBcapAWB}: For the first equality, observe that ``$\subseteq$'' follows from~\eqref{eq:AWS_invariance}. For ``$\supseteq$'' let $x\in(E\cV^*_{[E,A,B]}+\im B)\cap (A\cW^*_{[E,A,B]}+\im B)$, i.e., $x=Ev+b_1=Aw+b_2$ for some $v\in\cV^*_{[E,A,B]}, w\in\cW^*_{[E,A,B]}, b_1,b_2\in\im B$. Then
    \[
        v\in E^{-1} \{Aw+b_2-b_1\} \subseteq E^{-1} (A\cW^*_{[E,A,B]}+\im B) = \cW^*_{[E,A,B]}
    \]
    and hence $x\in E(\cV^*_{[E,A,B]}\cap\cW^*_{[E,A,B]})+\im B$. The second equality in~\eqref{eq:EVBcapAWB} can be proved similarly; we omit the proof.

    \item The proof of this statement can be easily inferred from the proof of~\cite[Lem.~2.1]{Berg19a}.\qedhere
\end{enumerate}
\end{Proof}

\section{P-feedback forms}\label{Ssec:PFF}

In this section, we recall the concept of P-feedback which allows a decoupling of the DAE~\eqref{eq:EAB}.
This has been successfully used for various purposes, cf.\ the survey~\cite{BergReis13a}. After that, we present the P-feedback form from~\cite{LoisOzca91}, which is a canonical form. As a new contribution, we
 derive  a quasi P-feedback form using the augmented Wong sequences.
Relevant system theoretic  information can be read off this form.
  Apart from allowing a calculation via the simple subspace sequences, this also provides some geometric insight in the decoupling.

Concerning applications, the new quasi P-feedback form may be advantageous for instance in observer design problems for differential-algebraic systems. The construction of regular and freely initializable observers in the proof of~\cite[Thm.~3.8]{BergReis17c} completely relies on the  P-feedback form. However, in order to implement this design procedure, a method with lower complexity would be favorable, for which the new quasi P-feedback form is predestined.

\subsection{P-feedback equivalence}
We recall the notion of P-feedback equivalence for systems $[E,A,B]\in\Sigma_{\ell,n,m}$, see e.g.~\cite{BergReis13a}.
Here and in the
following, $GL_p(\R)$ denotes the set of all invertible matrices in~$\R^{p\times p}$,  $p\in\N$.

\begin{Definition}[P-feedback equivalence]\label{Def:PF-equiv}
Two systems
$[E_1, A_1, B_1], [E_2, A_2, B_2]  \in \Sigma_{\ell,n,m}$
are called
\emph{P-feedback equivalent}, if
\begin{equation}
\begin{aligned}
&\exists\, S\in \Gl_\ell(\R), T\in \Gl_n(\R), V\in \Gl_m(\R), F_P\in \R^{m\times n}: \\
&\begin{bmatrix} sE_1-A_1, & -B_1\end{bmatrix}
=
S
\begin{bmatrix} sE_2-A_2, & -B_2 \end{bmatrix}
\begin{bmatrix} T & 0 \\  F_P & V \end{bmatrix}\,;
\end{aligned}\label{eq:P-feedbequiv}
\end{equation}
we write
\[
 [E_1,A_1,B_1] \ {\cong_{P}} \  [E_2, A_2, B_2]
 \quad
    \text{or, if necessary,}\quad
     [E_1,A_1,B_1] \ \overset{S,T,V,F_P}{\cong_{P}} \  [E_2, A_2, B_2]\,.
\]
\end{Definition}

\begin{Remark}
P-feedback equivalence is an equivalence relation on $\Sigma_{\ell,n,m}$:
\begin{itemize}
    \item Reflexivity: Clear with $S=I$, $T=I$, $V=I$, $F_P=0$.
    \item Symmetry: For $[E_1,A_1,B_1] \overset{S,T,V,F_P}{\cong_{P}}  [E_2, A_2, B_2]$ we have that
\[
    [E_2,A_2,B_2] \ \overset{S^{-1},T^{-1},V^{-1},-V^{-1}F_PT^{-1}}{\cong_{P}} \  [E_1, A_1, B_1],
\]
which can be verified by observing that
\[
    \begin{bmatrix} T & 0\\ F_P & V\end{bmatrix}^{-1} = \begin{bmatrix} T^{-1} & 0\\ -V^{-1}F_PT^{-1} & V^{-1}\end{bmatrix}.
\]
\item Transitivity: For $[E_1,A_1,B_1] \overset{S_1,T_1,V_1,F_1}{\cong_{P}}  [E_2, A_2, B_2]\overset{S_2,T_2,V_2,F_2}{\cong_{P}}  [E_3, A_3, B_3]$ we have
\[
    [E_1,A_1,B_1] \overset{S_1 S_2,T_2 T_1 ,V_2 V_1,\widetilde{F}}{\cong_{P}}  [E_3, A_3, B_3]\quad\text{ where }\quad \widetilde{F} = F_2 T_1 + V_2 F_1.
\]
\end{itemize}
\end{Remark}

The augmented Wong sequences change under P-feedback as shown in the following result.

\begin{Lemma}[Augmented Wong sequences under P-feedback]\label{lem:Wong-Pfb}
If the systems  $[E_1, A_1, B_1]$, $[E_2, A_2, B_2] \in \Sigma_{\ell,n,m}$
are P-feedback equivalent $[E_1, A_1, B_1] \overset{S,T,V,F_P}{\cong_{P}}  [E_2, A_2, B_2]$, then
\begin{equation*}\label{eq:rel-wong-P}
\begin{aligned}
    \forall\, i\in\N_0:\quad  \cV_{[E_1,A_1,B_1]}^i = T^{-1} \cV_{[E_2,A_2,B_2]}^i\quad \ \text{and} \quad
      \cW_{[E_1,A_1,B_1]}^i = T^{-1} \cW_{[E_2,A_2,B_2]}^i.
    \end{aligned}
\end{equation*}
\end{Lemma}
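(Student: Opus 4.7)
The plan is to proceed by induction on $i$, after first extracting from the P-feedback equivalence~\eqref{eq:P-feedbequiv} the componentwise relations
\[
E_1 = S E_2 T, \qquad A_1 = S A_2 T + S B_2 F_P, \qquad B_1 = S B_2 V.
\]
These follow by comparing the coefficient of $s$, the constant term, and the last block column in~\eqref{eq:P-feedbequiv}. Since $S$ and $V$ are invertible, the relation for $B_1$ yields in particular $\im B_1 = S\, \im B_2$, a fact I will use repeatedly.

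For the $\cV$-sequence I would verify the base case $\cV^0_{[E_1,A_1,B_1]} = \R^n = T^{-1}\R^n = T^{-1}\cV^0_{[E_2,A_2,B_2]}$ and then carry out the induction step as follows. Using the induction hypothesis and the relations above,
\[
E_1 \cV^i_{[E_1,A_1,B_1]} + \im B_1 \;=\; S E_2 T \cdot T^{-1}\cV^i_{[E_2,A_2,B_2]} + S\,\im B_2 \;=\; S\bigl(E_2 \cV^i_{[E_2,A_2,B_2]} + \im B_2\bigr).
\]
Then $x \in \cV^{i+1}_{[E_1,A_1,B_1]}$ precisely when $S A_2 T x + S B_2 F_P x \in S(E_2\cV^i_{[E_2,A_2,B_2]} + \im B_2)$, i.e.\ when $A_2 T x \in E_2\cV^i_{[E_2,A_2,B_2]} + \im B_2$ (the $S B_2 F_P x$ contribution is absorbed into $\im B_2$ and then the invertible $S$ cancels); equivalently, $T x \in \cV^{i+1}_{[E_2,A_2,B_2]}$.

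The $\cW$-sequence case is handled analogously, with the base case $\cW^0_{[E_1,A_1,B_1]} = \{0\} = T^{-1}\{0\}$. In the induction step, applying the relations to $A_1 \cW^i_{[E_1,A_1,B_1]}$ produces the extra summand $S B_2 F_P T^{-1}\cW^i_{[E_2,A_2,B_2]}$, which is again absorbed into $S\,\im B_2$, so that
\[
A_1 \cW^i_{[E_1,A_1,B_1]} + \im B_1 \;=\; S\bigl(A_2 \cW^i_{[E_2,A_2,B_2]} + \im B_2\bigr);
\]
taking preimages under $E_1 = S E_2 T$ gives $\cW^{i+1}_{[E_1,A_1,B_1]} = T^{-1}\cW^{i+1}_{[E_2,A_2,B_2]}$. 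I expect this $\cW$-step to be the only mildly subtle point, since it is here that the proportional feedback $F_P$ actively contributes and one must observe that its effect is swallowed by $\im B$; the $\cV$-step, by contrast, is purely computational.
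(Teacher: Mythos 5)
Your proposal is correct and follows essentially the same route as the paper: induction on $i$ using the componentwise relations $E_1=SE_2T$, $A_1=SA_2T+SB_2F_P$, $B_1=SB_2V$, with the feedback term $SB_2F_Px$ absorbed into $S\,\im B_2$ and the invertible $S$ cancelled under the preimage. The paper writes out only the $\cV$-case explicitly and declares the $\cW$-case analogous, exactly as you do in substance.
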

\begin{proof} We prove the first statement by induction. It is clear that $\cV_{[E_1,A_1,B_1]}^0 = T^{-1} \cV_{[E_2,A_2,B_2]}^0$.
Assume that $\cV_{[E_1,A_1,B_1]}^i = T^{-1} \cV_{[E_2,A_2,B_2]}^i$ for some $i\geq 0$.
Then~\eqref{eq:P-feedbequiv} yields
\begin{align*}
     \cV_{[E_1,A_1,B_1]}^{i+1}
    &= A_1^{-1} (E_1 \cV_{[E_1,A_1,B_1]}^i + \im  B_1)\\
    &= \setdef{x\in\R^n}{ \begin{array}{l} \exists\, y\in\cV_{[E_1,A_1,B_1]}^i\ \exists\, u\in\R^m:\\[1.5mm] (SA_2 T + SB_2F_P) x = SE_2T y + SB_2V u\end{array}}\\
    &= \setdef{x\in\R^n}{ \exists\, z\in\cV_{[E_2,A_2,B_2]}^i\ \exists\, v\in\R^m:\ A_2 T x = E_2 z + B_2 v}\\
    &= T^{-1} \left( A_2^{-1} (E_2 \cV_{[E_2,A_2,B_2]}^i + \im  B_2)\right) = T^{-1} \cV_{[E_2,A_2,B_2]}^{i+1}.
\end{align*}
The proof of the second statement is similar
and omitted.
\end{proof}

\subsection{P-feedback form (PFF)}

For the definition of the P-feedback form we need to introduce some further notation. For $k\in\N$, consider the matrices
\[
N_k : =\left[\SmallNilBlock{0}{1}{2ex}\right]\in\R^{k\times k},
\quad
K_k   :=\left[\SmallRectBlock{0}{1}{2ex}\right],\ L_k :=\left[\SmallRectBlock{1}{0}{2ex}\right]\in\R^{(k-1)\times k},
\]
where $K_k = L_k = 0_{0\times 1}$ for $k=1$.
We set, for some multi-index $\bsalpha=(\alpha_1,\ldots,\alpha_k)\in\N^k$, \
 $|\bsalpha| = \alpha_1 + \ldots + \alpha_k$
 and introduce the notation
\begin{align*}
  N_\bsalpha &:= \diag(N_{\alpha_1},\ldots,N_{\alpha_k}) \in\R^{|\bsalpha|\times|\bsalpha|},\\
  K_\bsalpha &:= \diag(K_{\alpha_1},\ldots,K_{\alpha_k}) \in\R^{(|\bsalpha|-k)\times|\bsalpha|},\\
  L_\bsalpha &:= \diag(L_{\alpha_1},\ldots,L_{\alpha_k}) \in\R^{(|\bsalpha|-k)\times|\bsalpha|}.
\end{align*}
By $e_i^{[n]}$ we denote the~$i$-th unit vector in~$\R^n$
and define
\[
    E_\bsalpha := \diag(e_{\alpha_1}^{[\alpha_1]},\ldots,e_{\alpha_k}^{[\alpha_k]})\in\R^{|\bsalpha|\times k},
    \qquad
 \text{for} \quad   \bsalpha=(\alpha_1,\ldots,\alpha_k)\in\N^k.
\]

\begin{Definition}[P-feedback form]\label{def:PFF}
The system $[E,A,B]\in\Sigma_{\ell,n,m}$
 is said to be in
\textit{P-feedback form}~(PFF), if
\begin{equation}
 \label{eq:Pform}
[E, A,B]
\ = \
\left[
  \begin{smallbmatrix}
   K_{\bsalpha} & 0 & 0 & 0 & 0 & 0\\[-0.5ex]
   \vphantom{N_{\bsalpha}^\top} 0 &  I_{|\bsbeta|} & 0 & 0& 0& 0\\
   0 & 0 & I_{n_{\overline{c}}} & 0 & 0 & 0\\
   \vphantom{I_{|\bskappa|}} 0 & 0 & 0 & N_\bsgamma & 0 & 0 \\[-0.5ex]
   0 & 0 & 0 & 0 & K_{\bsdelta}^\top & 0 \\[-0.5ex]
   0 & 0 & 0 & 0 & 0 & K_{\bskappa}^\top
 \end{smallbmatrix},
 \begin{smallbmatrix}
   L_{\bsalpha} & 0 & 0 & 0 & 0 & 0\\[-0.5ex]
   \vphantom{I_{|\bsalpha|}} 0 & N_{\bsbeta}^\top & 0 & 0& 0& 0 \\
   \vphantom{I_{n_{\overline{c}}}} 0 & 0 & A_{\overline{c}} & 0 & 0 & 0 \\
   0 & 0 & 0 & I_{|\bsgamma|} & 0 & 0\\[-0.5ex]
   0 & 0 & 0 & 0 & L_{\bsdelta}^\top & 0 \\[-0.5ex]
   0 & 0 & 0 & 0 & 0 & L_{\bskappa}^\top
  \end{smallbmatrix},
  \begin{smallbmatrix}
    \vphantom{K_{\bsbeta}}0&0&0 \\[-0.5ex]
    \vphantom{I_{|\bsalpha|}}\vphantom{N_{\bsalpha}^\top}E_{\bsbeta}&0&0  \\
    \vphantom{I_{n_{\overline{c}}}}0&0&0\\
    \vphantom{I_{|\bskappa|}}0 &0&0\\[-0.5ex]
    \vphantom{L_{\bsdelta}^\top} 0&0&0 \\[-0.5ex]
    \vphantom{K_{\bsgamma}^\top}0 & 0 & E_{\bskappa}
  \end{smallbmatrix}
  \right],
\end{equation}
where $\bsalpha\in\N^{n_\bsalpha},\bsbeta\in\N^{n_\bsbeta},\bsgamma\in\N^{n_\bsgamma},\bsdelta\in\N^{n_\bsdelta},\bskappa\in\N^{n_\bskappa}$ are multi-indices and $A_{\overline{c}}\in\R^{n_{\overline{c}}\times n_{\overline{c}}}$.
\end{Definition}

We like to note that the~PFF of a system~$[E,A,B]$ can be viewed as a Kronecker canonical form~(KCF)
of the augmented pencil $s[E,0]-[A,B]$ with some additional structure, as shown in~\cite[Rem.~3.10]{BergReis13a}. This is remarkable
 because P-feedback equivalence induces an equivalence relation on $\R^{\ell\times(n+m)}[s]$ which is a subrelation of the system equivalence used to obtain the Kronecker canonical form, and hence it is not clear whether the Kronecker canonical form of~$s[E,0]-[A,B]$ is contained in each of the smaller equivalence classes.

We use the connection between the KCF and the PFF to show that two P-feedback equivalent systems have the same PFF up to permutation of the entries of $\bsalpha, \bsbeta, \bsgamma, \bsdelta, \bskappa$ and similarity of $A_{\overline{c}}$.

\begin{Proposition}[Uniqueness of indices for PFF]\label{Prop:Indices-uniqueness-PFF}
Let $[E_i,A_i,B_i]\in\Sigma_{\ell,n,m}$, $i=1,2$,
be in PFF~\eqref{eq:Pform} with corresponding multi-indices
$\bsalpha_i\in\N^{n_{\bsalpha_i}}, \bsbeta_i\in\N^{n_{\bsbeta_i}}, \bsgamma_i\in\N^{n_{\bsgamma_i}}, \bsdelta_i\in\N^{n_{\bsdelta_i}},\bskappa_i\in\N^{n_{\bskappa_i}}$, and  $A_{\overline{c},i}\in\R^{n_{\overline{c},i}\times n_{\overline{c},i}}$. If $[E_1, A_1, B_1] \cong_{P} [E_2, A_2, B_2]$, then
\[
    \bsalpha_1 =  P_\bsalpha \bsalpha_2,\quad \bsbeta_1 =  P_\bsbeta \bsbeta_2,\quad \bsgamma_1 =  P_\bsgamma \bsgamma_2,\quad \bsdelta_1 =   P_\bsdelta \bsdelta_2,\quad \bskappa_1 =  P_\bskappa \bskappa_2
\]
and
\[
    n_{\overline{c},1} = n_{\overline{c},2}, \quad A_{\overline{c},1} = H^{-1} A_{\overline{c},2} H,
\]
for permutation matrices $P_\bsalpha, P_\bsbeta, P_\bsgamma, P_\bsdelta, P_\bskappa$
of appropriate sizes and~$H\in\Gl_{n_{\overline{c},1}}(\R)$.
\end{Proposition}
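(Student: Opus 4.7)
The plan is to reduce uniqueness of the PFF to the uniqueness of (i) the Kronecker canonical form (KCF) of the augmented pencil $s[E,0]-[A,B]$ together with (ii) the dimensions of the augmented Wong sequences, both of which are P-feedback invariants.

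First, I would rewrite the P-feedback equivalence~\eqref{eq:P-feedbequiv} as a strict equivalence of the augmented pencils:
\[
s[E_1,0]-[A_1,B_1] \;=\; S\,\bigl(s[E_2,0]-[A_2,B_2]\bigr)\,\begin{bmatrix} T & 0\\ F_P & V\end{bmatrix},
\]
where the right factor is invertible since $T\in\Gl_n(\R)$ and $V\in\Gl_m(\R)$. Hence the augmented pencils share the same KCF up to permutation of blocks. Next, I would inspect the KCF of the augmented pencil of a PFF system block by block: the $\bsalpha$- and $\bsbeta$-blocks contribute right minimal indices of sizes $\alpha_i-1$ and $\beta_i$ respectively (in the latter case the unit input column $e_{\beta_i}^{[\beta_i]}$ enlarges the pencil and raises the right index by one); the $(I_{n_{\overline{c}}},A_{\overline{c}},0)$-block contributes the finite elementary divisors determined by the Jordan form of $A_{\overline{c}}$; the $\bsgamma$-block contributes infinite elementary divisors of sizes $\gamma_i$; the $\bsdelta$-block contributes left minimal indices of sizes $\delta_i-1$; and the $\bskappa$-block, once its input column is absorbed, becomes a square unimodular pencil and therefore contributes no Kronecker invariant. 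Uniqueness of the KCF already pins down $\bsgamma$, $\bsdelta$, and the Jordan class of $A_{\overline{c}}$ (whence $A_{\overline{c},1}=H^{-1}A_{\overline{c},2}H$ for some $H\in\Gl_{n_{\overline{c}}}(\R)$), together with the combined right-index multiset $\{\alpha_i-1\}\sqcup\{\beta_j\}$.

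To separate $\{\alpha_i\}$ from $\{\beta_j\}$ and to recover $\{\kappa_i\}$, I would invoke Lemma~\ref{lem:Wong-Pfb}: the dimensions of $\cV^i_{[E,A,B]}$ and $\cW^i_{[E,A,B]}$ are P-feedback invariants. A block-wise computation on the PFF expresses their $j$-th increments as linear combinations of tail-counts $\#\{i:\alpha_i>j\}$, $\#\{i:\beta_i>j\}$, $\#\{i:\gamma_i>j\}$, $\#\{i:\delta_i-1>j\}$, $\#\{i:\kappa_i-1>j\}$, with each index type appearing only in specific increments dictated by the block structure of~\eqref{eq:Pform}. Combining these with the KCF data and the normalisation $\alpha_i,\beta_i,\delta_i,\kappa_i\ge 1$ permits recovery of the individual multisets $\{\alpha_i\}$, $\{\beta_i\}$, $\{\kappa_i\}$ via a cascading comparison of multiplicities (the boundary conditions prevent indeterminacy).

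The main obstacle is precisely this book-keeping in the final step: the bare KCF of the augmented pencil is blind to the $\bskappa$-block and cannot distinguish an $\bsalpha$-index of size $k+1$ from a $\bsbeta$-index of size $k$; the Wong dimensions supplied by Lemma~\ref{lem:Wong-Pfb} provide the extra invariants required, and inverting the resulting triangular system of equations that links the Wong increments to the index multiplicities delivers the asserted permutation equalities.
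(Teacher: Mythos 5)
Your overall strategy---pass to the augmented pencil $s[E,0]-[A,B]$, observe that P-feedback equivalence is a strict equivalence of these pencils, and then combine the uniqueness of the KCF with the P-feedback invariance of the augmented Wong sequences from Lemma~\ref{lem:Wong-Pfb}---is exactly the route the paper indicates (its proof is essentially a citation of Lemma~\ref{lem:Wong-Pfb} together with an external reference that carries out the index bookkeeping). However, your block-by-block analysis of the KCF contains a genuine error concerning the $\bskappa$-blocks. The augmented pencil of the block $[K_{\kappa_i}^\top,L_{\kappa_i}^\top,e_{\kappa_i}^{[\kappa_i]}]$ is the $\kappa_i\times\kappa_i$ pencil
\[
  s\,[K_{\kappa_i}^\top,0]-[L_{\kappa_i}^\top,e_{\kappa_i}^{[\kappa_i]}]
  \;=\;
  \begin{bmatrix} -1 & & \\ s & \ddots & \\ & \ddots & -1 \end{bmatrix},
\]
which is of the form $sN-I$ with $N$ nilpotent of index $\kappa_i$. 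It is unimodular over $\R[s]$, but \emph{strict} equivalence does not trivialize such blocks: they are precisely the infinite elementary divisors of the KCF, and blocks with different $\kappa_i$ are strictly inequivalent (e.g.\ they have different $\rk E$). Consequently the KCF of the augmented pencil does \emph{not} ``pin down $\bsgamma$''; it only determines the combined multiset $\{\gamma_i\}\sqcup\{\kappa_j\}$ of infinite elementary divisor degrees, exactly as it only determines $\{\alpha_i-1\}\sqcup\{\beta_j\}$ for the right minimal indices. Your later statement that ``the bare KCF is blind to the $\bskappa$-block'' repeats this misconception.

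The gap is repairable within your own framework, but it must be closed explicitly: besides separating $\{\alpha_i\}$ from $\{\beta_j\}$ you must also separate $\{\gamma_i\}$ from $\{\kappa_j\}$, and for this the Wong-sequence dimensions are again the right tool. A block computation gives $\dim\cW^i=\sum_j\min(i,\alpha_j)+\sum_j\min(i,\beta_j)+\sum_j\min(i,\gamma_j)+\sum_j\min(i,\kappa_j-1)$ and $\dim\cV^i=|\bsalpha|+|\bsbeta|+n_{\overline c}+\sum_j\max(\gamma_j-i,0)+\sum_j\max(\delta_j-1-i,0)+\sum_j\max(\kappa_j-1-i,0)$; after removing the $\bsdelta$-contribution (known from the left minimal indices), the second differences of $\dim\cV^i$ yield the counts $\#\{j:\gamma_j=k\}+\#\{j:\kappa_j=k+1\}$, which together with the KCF data $\#\{j:\gamma_j=k\}+\#\{j:\kappa_j=k\}$ determine both multisets by the same downward cascade you describe for $\bsalpha$ versus $\bsbeta$. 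This additional disentangling step is not optional decoration---without it the proposition's claim about $\bsgamma$ and $\bskappa$ is simply not established---and since the entire final step of your argument is asserted rather than computed, you should carry out at least this part of the bookkeeping in detail.
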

\begin{proof}
  This result is a consequence of Lemma~\ref{lem:Wong-Pfb} and~\cite[Thm.~2.2~\&~Prop.~2.3]{BergReis15a}.
\end{proof}

We are now in the position to show that any $[E,A,B]\in\Sigma_{\ell,n,m}$ is P-feedback equivalent to a system in PFF.

\begin{Theorem}[PFF]\label{Thm:brundae}
For any system $[E,A,B]\in\Sigma_{\ell,n,m}$
  there exist $S\in \Gl_\ell(\R), T\in \Gl_n(\R), V\in \Gl_m(\R), F_P\in \R^{m\times n}$
such that
\[
    [SET,SAT+SBF_P,SBV]
\ \
\text{is in PFF~\eqref{eq:Pform}.}
\]
\end{Theorem}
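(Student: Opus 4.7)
The plan is to derive the PFF from the Kronecker canonical form (KCF) of the augmented pencil $s[E,0]-[A,B]\in\R[s]^{\ell\times(n+m)}$ by exploiting the additional freedom provided by the input transformation $V$ and the feedback $F_P$. This reduction was already noted in~\cite[Rem.~3.10]{BergReis13a} and is made explicit via the augmented Wong sequences.

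First I would apply the classical KCF to $s[E,0]-[A,B]$, obtaining $\widetilde S\in\Gl_\ell(\R)$ and $Q\in\Gl_{n+m}(\R)$ that bring the augmented pencil into a block-diagonal sum of right singular blocks $sK_k-L_k$, left singular blocks $sK_k^\top-L_k^\top$, a finite regular block $sI-J$, and an infinite regular block $sN-I$. The KCF is not yet in PFF because an arbitrary $Q\in\Gl_{n+m}(\R)$ need not have the block-lower-triangular shape $\begin{bmatrix} T & 0 \\ F_P & V\end{bmatrix}$ that P-feedback equivalence requires.

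The key step is to choose the KCF decomposition compatibly with the state/input partition $\R^{n+m}=(\R^n\times\{0\})\oplus(\{0\}\times\R^m)$. By Proposition~\ref{Prop:AWS_properties}(c), the (ordinary) Wong limits of $s[E,0]-[A,B]$ project onto the augmented Wong limits $\cV^*_{[E,A,B]}$ and $\cW^*_{[E,A,B]}$, and intersecting the corresponding KCF subspaces with $\{0\}\times\R^m$ yields, for each singular block, a trichotomy according to whether its ``free'' column lies in the state subspace, can be absorbed into the input subspace, or is a genuine mixture. This produces precisely the five singular block types $\bsalpha,\bsbeta,\bsdelta,\bskappa$ together with the two regular types $\bsgamma$ and $A_{\overline c}$: right singular blocks absorbable into the input yield the controllability (Brunovsky) blocks $(I_{|\bsbeta|},N_{\bsbeta}^\top,E_{\bsbeta})$, purely-state right singular blocks yield $(K_{\bsalpha},L_{\bsalpha})$, purely-state left singular blocks yield $(K_{\bsdelta}^\top,L_{\bsdelta}^\top)$, mixed left singular blocks yield $(K_{\bskappa}^\top,L_{\bskappa}^\top,E_{\bskappa})$, and the finite and infinite regular parts yield $(I_{n_{\overline c}},A_{\overline c})$ and $(N_{\bsgamma},I_{|\bsgamma|})$, respectively.

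The main obstacle is that a freely chosen KCF basis may mix state and input columns in a way that P-feedback cannot undo: through $F_P$ one may add multiples of states to inputs, but never the reverse. Overcoming this requires choosing bases for the Wong limits that respect the input subspace $\{0\}\times\R^m$; the intersection identities~\eqref{eq:E(VcapW)_A(VcapW)} and~\eqref{eq:EVBcapAWB} are precisely what guarantee that such compatible bases exist. Once they are fixed, reading off $(S,T,V,F_P)$ from the resulting change of coordinates and verifying that each block has the claimed shape is a routine blockwise computation, analogous in spirit to the construction in~\cite{LoisOzca91}.
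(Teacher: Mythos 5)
The paper offers no proof of Theorem~\ref{Thm:brundae}: it defers entirely to \cite[Thm.~3.1]{LoisOzca91}, and it explicitly warns (in the paragraph following Definition~\ref{def:PFF}) that it is \emph{not} clear a priori that the KCF of $s[E,0]-[A,B]$ is attainable inside the smaller P-feedback equivalence class. Your sketch has a genuine gap at exactly that point. The whole content of the theorem is that the Kronecker transformation $Q\in\Gl_{n+m}(\R)$ acting on the columns of the augmented pencil can be replaced by one of the restricted block-lower-triangular form $\begin{smallbmatrix} T & 0\\ F_P & V\end{smallbmatrix}$. You assert that the intersection identities \eqref{eq:E(VcapW)_A(VcapW)} and \eqref{eq:EVBcapAWB} ``are precisely what guarantee that such compatible bases exist'' and that the remainder is routine. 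Those identities only yield the coarse three-way splitting of the state space that underlies the QPFF of Theorem~\ref{thm:QPFF}; they say nothing about how to choose, within each individual Kronecker chain, generators whose input components can be normalized to the unit vectors $E_\bsbeta$, $E_\bskappa$ by a transformation that never adds input directions to state directions. That is the ``subtle'' step for which the paper cites \cite{LoisOzca91}, and your argument assumes it rather than proving it.

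There is also a concrete error in your dictionary between KCF blocks of the augmented pencil and PFF blocks. The augmented pencil of the block $[K_{\kappa_i}^\top, L_{\kappa_i}^\top, e_{\kappa_i}^{[\kappa_i]}]$ is the \emph{square} pencil $[sK_{\kappa_i}^\top - L_{\kappa_i}^\top,\, -e_{\kappa_i}^{[\kappa_i]}]$, which is regular with nilpotent leading coefficient, i.e.\ an infinite elementary divisor of degree $\kappa_i$ of $s[E,0]-[A,B]$ --- not a left singular block. So there is no ``trichotomy for left singular blocks'': a left singular block has no free column, and the input subspace $\{0\}\times\R^m$ only interacts with column structure. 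The correct correspondence is that the row minimal indices of the augmented pencil give exactly $\bsdelta$, while the infinite elementary divisors must be \emph{split} between $\bsgamma$ and $\bskappa$, and the column minimal indices between $\bsalpha$ (index $\alpha_i-1$) and $\bsbeta$ (index $\beta_i$). These two splits are additional invariants not visible in the KCF of the augmented pencil alone, and producing them is part of what any proof of Theorem~\ref{Thm:brundae} must accomplish.
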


The proof of Theorem 3.6 is omitted. It relies on
subtle transformations and is
proved in~\cite[Thm.~3.1]{LoisOzca91}~-- a paper coauthored by Nicos Karcanias.

\begin{Example}[PFF]\label{Ex:P-form}
For an illustration of Theorem~\ref{Thm:brundae} we consider the system $[E,A,B]\in\Sigma_{7,6,3}$ with
\[
    E=\begin{smallbmatrix}-2 & -3 & 0 & -1 & -4 & -3\\ 1 & 4 & 3 & -1 & 4 & 4\\ 0 & -4 & -7 & 1 & -3 & -6\\ 0 & 2 & 1 & -2 & 2 & 1\\ 2 & 5 & 1 & -1 & 6 & 4\\ 2 & 4 & 2 & 1 & 5 & 5\\ -2 & 2 & 9 & -2 & 0 & 5
 \end{smallbmatrix},\quad
    A=\begin{smallbmatrix} -2 & -2 & 4 & 3 & 1 & -1\\ 0 & -3 & -3 & -2 & -5 & -3\\ -1 & 4 & 3 & 5 & 7 & 3\\ -1 & -2 & 3 & 1 & 0 & 0\\ 1 & 1 & 1 & -2 & 0 & 3\\ 4 & 0 & -5 & -5 & -2 & -2\\ 2 & -6 & 4 & -5 & -4 & -4
 \end{smallbmatrix},\quad
    B=\begin{smallbmatrix}1 & -1 & -1\\ 0 & 0 & 2\\ -1 & 2 & -3\\ 1 & -1 & 1\\ 0 & 0 & 2\\ 1 & -3 & 2\\ 5 & -9 & 3
\end{smallbmatrix}.
\]
With
\[\begin{aligned}
   S &= \begin{smallbmatrix} -15 & 2 & 4 & 5 & -6 & -6 & 4\\ -16 & -1 & 2 & 9 & -8 & -5 & 3\\ -3 & -1 & 0 & 3 & -2 & 0 & 0\\ 8 & 2 & 0 & -5 & 4 & 2 & -1\\ -1 & 0 & 0 & 1 & -1 & 0 & 0\\ -6 & 0 & 1 & 3 & -3 & -2 & 1\\ -4 & 0 & 1 & 2 & -2 & -1 & 1
 \end{smallbmatrix},&\quad
   T &= \begin{smallbmatrix} -17 & 10 & -13 & -3 & -8 & 6\\ 13 & -6 & 9 & 2 & 6 & -4\\ -7 & 4 & -5 & -1 & -3 & 2\\ 6 & -3 & 4 & 1 & 3 & -2\\ -5 & 2 & -3 & 0 & -2 & 1\\ 3 & -2 & 2 & 0 & 1 & -1
\end{smallbmatrix},\\
   V &= \begin{smallbmatrix}   2 & 0 & -1\\ 1 & 0 & -1\\ 0 & -1 & -1 \end{smallbmatrix},&
   F_P &= \begin{smallbmatrix} 3 & 3 & -2 & -2 & 0 & 2\\ -14 & 10 & -12 & -3 & -7 & 6\\ 7 & -4 & 6 & 3 & 4 & -3
\end{smallbmatrix},
\end{aligned}
\]
it can be verified that
\[\begin{aligned}
  SET &= \diag(K_1,I_2, I_1,N_1, K_2^\top, K_1^\top),\\
  S(AT+BF_P) &= \diag(L_1,N_2^\top,A_{\overline{c}},1,L_2^\top,L_1^\top),\\
  SBV &= \diag (0_{0\times 0},e_2^{[2]},0_{1\times 0},0_{1\times 0}, e_2^{[2]}, e_1^{[1]}),
  \end{aligned}
\]
where $A_{\overline{c}} = [1]$; therefore $[E,A,B]$ is P-feedback equivalent to a system in the PFF~\eqref{eq:Pform} with $\bsalpha=(1)$, $\bsbeta=(2)$, $n_{\overline{c}}=1$, $\bsgamma=(1)$, $n_\bsdelta=0$, $\bskappa=(2,1)$. The details on how to obtain this transformation as well as numerical considerations are out of the scope of this contribution. We will
however revisit this example in the next section in the context of the quasi
P-feedback forms and will briefly discuss numerical issues in Remark~\ref{rem:numerics}.
\end{Example}

\subsection{Quasi P-feedback form (QPFF)}

We will now weaken P-feedback forms to
\textit{quasi}  P-feedback forms.
Roughly speaking,
the latter is ``less canonical'' than the former,
it contains less zeros and ones. However~-- and this is
the important message~--
the relevant  system theoretic properties can be
read off the quasi P-feedback form and, moreover,
the form provides a geometric insight (as it is obtained via the augmented Wong sequences) and can be easily computed.

\begin{Definition}\label{def:QPFF-neu}
  The system $[E,A,B]\in\Sigma_{\ell,n,m}$ is said to be in \emph{quasi P-feedback form~(QPFF)}, if
  \begin{equation}\label{eq:QPFF}
     [E,A,B] = \left[\begin{bmatrix} E_{11} & E_{12} & E_{13} \\ 0 & E_{22} & E_{23} \\ 0 & 0 & E_{33} \end{bmatrix}, \begin{bmatrix} A_{11} & A_{12} & A_{13} \\ 0 & A_{22} & A_{23} \\ 0 & 0 & A_{33} \end{bmatrix}, \begin{bmatrix} B_{11} & 0 & B_{13} \\ 0 & 0 & 0 \\ 0 & 0 & B_{33} \end{bmatrix}\right],
  \end{equation}
  where
  \begin{enumerate}[(i)]
     \item \label{item:QPFF11} $[E_{11},A_{11},B_{11}]\in\Sigma_{\ell_1,n_{1},m_{1}}$ with $\ell_1 < n_1 + m_1$, $\rk E_{11} = \rk_\C [\lambda E_{11} - A_{11}, B_{11}] = \ell_1$ for all $\lambda\in\C$ and $\rk B_{11} = m_1$,
     \item \label{item:QPFF22}  $E_{22},A_{22}\in\R^{\ell_{2}\times n_{2}}$ with $\ell_2 = n_2$ and $E_{22}\in\Gl_{n_2}(\R)$,
     \item \label{item:QPFF33}$[E_{33},A_{33},B_{33}]\in\Sigma_{\ell_{3},n_{3},m_{3}}$ satisfies $\rk_\C [\lambda E_{33}-A_{33},B_{33}] = n_{3} + m_3$ for all $\lambda\in\C$
  \end{enumerate}
  and the remaining matrices have suitable sizes.

  Furthermore, a~QPFF~\eqref{eq:QPFF} with zero off-diagonal blocks (i.e.\ $E_{12}=A_{12}=0$, $E_{13}=A_{13} = 0$, $B_{13}=0$, $E_{23}=A_{23}=0$) is called \emph{decoupled~QPFF}.
\end{Definition}

\begin{Remark}
The three conditions in  Definition~\ref{def:QPFF-neu}
describe control theoretic properties as follows (see the survey~\cite{BergReis13a} for the different notions of controllability):
\begin{enumerate}[(i)]
   \item
  The system $[E_{11},A_{11},B_{11}]$ in the~QPFF~\eqref{eq:QPFF} is completely controllable; the input is not constrained and not redundant.
   \item The~ODE system $[E_{22},A_{22},0]$ is uncontrollable.
   \item The system $[E_{33},A_{33},B_{33}]$  has only
    the trivial solution; in particular, the system is trivially behaviorally controllable but the input corresponding to~$B_{33}$ is maximally constrained (because it has to be zero).
\end{enumerate}
\end{Remark}

The following result will be helpful in due course.

\begin{Proposition}\label{Prop:decoupledQPFF}
Any system $[E,A,B]\in\Sigma_{\ell,n,m}$ in~QPFF~\eqref{def:QPFF-neu}
is P-feedback equivalent to a system in \emph{decoupled}~QPFF with
 identical  diagonal blocks.
\end{Proposition}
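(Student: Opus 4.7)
My plan is to construct a P-feedback transformation $(S,T,V,F_P)$ explicitly that preserves the diagonal blocks while annihilating the off-diagonal blocks. I take $S\in\Gl_\ell(\R)$, $T\in\Gl_n(\R)$, $V\in\Gl_m(\R)$ to be block upper triangular with identity diagonal blocks (partitioned compatibly with the QPFF), and $F_P\in\R^{m\times n}$ supported only in off-diagonal positions. A direct block calculation confirms that this ansatz leaves the diagonal blocks $E_{ii}$, $A_{ii}$, $B_{11}$, $B_{33}$ unchanged provided the diagonal blocks of $F_P$ vanish and $V_{11},V_{33}=I$, which is forced by the full column rank of $B_{11}$ and $B_{33}$ coming from conditions~(i) and~(iii). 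The problem thus reduces to choosing the remaining off-diagonal free parameters so that the off-diagonal blocks of $SET$, $S(AT+BF_P)$, and $SBV$ all vanish.

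I carry out the decoupling in two stages. \emph{Stage~1} decouples block~3 from blocks~1 and~2 by annihilating the $(i,3)$-blocks for $i=1,2$. For row~2 the conditions reduce to the pencil system
\[
(sE_{22}-A_{22})T_{23} + S_{23}(sE_{33}-A_{33}) = -(sE_{23}-A_{23}), \qquad S_{23}B_{33}=0,
\]
and there is an analogous coupled system for row~1 in the unknowns $T_{13}, S_{13}, V_{13}, (F_P)_{1,3}$, with algebraic side-constraint $B_{11}V_{13}+S_{13}B_{33}=-B_{13}$. \emph{Stage~2} decouples block~2 from block~1 via the pencil Sylvester equation
\[
(sE_{11}-A_{11})T_{12} - B_{11}(F_P)_{1,2} + S_{12}(sE_{22}-A_{22}) = -(sE_{12}-A_{12}),
\]
with no further side-constraints.

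Solvability of these generalized Sylvester pencil equations rests on the structural rank conditions: condition~(iii) forces the augmented right-pencil $[sE_{33}-A_{33},-B_{33}]$ to carry no generalized eigenvalues, so its characteristic structure is disjoint from that of the regular pencil $sE_{22}-A_{22}$ (and from $[sE_{11}-A_{11},-B_{11}]$); dually, condition~(i) forces $[sE_{11}-A_{11},-B_{11}]$ to have trivial left eigenstructure, disjoint from the spectrum of $sE_{22}-A_{22}$. The classical theory of Sylvester-type pencil equations with disjoint characteristics then provides solutions of the pencil equations. The main technical obstacle is treating the algebraic side-constraints (e.g.\ $S_{23}B_{33}=0$) in Stage~1 on top of the pencil Sylvester equations; I would handle these by first bringing $[sE_{33}-A_{33},-B_{33}]$ and $[sE_{11}-A_{11},-B_{11}]$ into Kronecker canonical form, where conditions~(iii) and~(i) force only minimal-index blocks to appear, so that the Sylvester systems decouple into scalar triangular subsystems in which the side-constraints can be absorbed using the remaining freedom in $V$ and $F_P$. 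Composing the Stage~1 and Stage~2 transformations then yields the required P-feedback equivalence to a decoupled QPFF with identical diagonal blocks.
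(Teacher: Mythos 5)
Your skeleton coincides with the paper's: unipotent block-triangular $S$, $T$ and off-diagonal $F_P$, leading to generalized Sylvester-type pencil equations for the unknown off-diagonal blocks, whose solvability is extracted from conditions (i)--(iii) via disjointness of characteristic structures (the paper does this through Lemma~\ref{lem:twoEqs2genSylvester} and Lemma~\ref{lem:Sylvester-equation}). The genuine gap sits exactly where you flag the ``main technical obstacle'': the algebraic side-constraints such as $S_{23}B_{33}=0$ and $B_{11}V_{13}+S_{13}B_{33}=-B_{13}$. Your proposed resolution rests on the claim that conditions (i) and (iii) force \emph{only minimal-index blocks} in the Kronecker forms of the augmented pencils. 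That is true for $[sE_{11}-A_{11},-B_{11}]$ (full row rank for all $\lambda$, including $\lambda=\infty$ because $\rk E_{11}=\ell_1$, leaves only underdetermined blocks), but it is false for $[sE_{33}-A_{33},-B_{33}]$: condition (iii) excludes only \emph{finite} eigenvalues, so besides overdetermined blocks the Kronecker form may contain nilpotent blocks $sN-I$ (eigenvalue $\infty$); e.g.\ $E_{33}=[0]$, $A_{33}=[1]$, $m_3=0$ satisfies (iii). Moreover, passing to the Kronecker form of the augmented pencil mixes the columns of $B_{33}$ with those of $E_{33},A_{33}$, so the constraint $S_{23}B_{33}=0$ does not transform into anything transparently absorbable in the resulting scalar subsystems; this step is asserted rather than established.

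The paper closes this hole by a different device, which you could substitute for your Stage~1: keep $[A_{33},-B_{33}]$ and $[E_{33},0]$ as the coefficient pencils of the two-matrix system \eqref{eq:twoMatrixEqs}, i.e.\ write the $(2,3)$-decoupling in the relaxed form
\[
0=[A_{23},0]+A_{22}[F_T^x,F_T^u]+F_S[A_{33},-B_{33}],\qquad 0=[E_{23},0]+E_{22}[F_T^x,F_T^u]+F_S[E_{33},0].
\]
This is solvable by Lemmas~\ref{lem:twoEqs2genSylvester} and~\ref{lem:Sylvester-equation} (no simultaneous rank drop: $sE_{22}-A_{22}$ can only drop rank at finitely many finite $\lambda$, where $[\lambda E_{33}-A_{33},B_{33}]$ has full column rank, and it has no drop at $\infty$ since $E_{22}$ is invertible). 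The last block column of the second equation then reads $0=E_{22}F_T^u$, so $F_T^u=0$, and the last block column of the first equation becomes exactly $F_S B_{33}=0$ --- the side-constraint is satisfied automatically rather than imposed. For the $(1,3)$-block the paper uses the full column rank of $[A_{33},-B_{33}]$ to split off an identity block via a row operation, solves the $B_{13}$-constraint directly by $H_S^u=B_{13}$ (no $V_{13}$ is needed; in fact $V=I$ suffices throughout), and reduces the remainder to a Sylvester system handled by the transposed version of the lemma. With this replacement your composition of stages goes through.
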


\begin{proof}
 The proof is structurally similar to the proof of~\cite[Thm.~2.6]{BergTren12}, however, due to the presence of the input some technical adjustments are necessary. Two technical results required for the proof are collected in the Appendix~\ref{app:ProofDecoupledQPFF}.

To show that any~QPFF can be decoupled, we prove existence of matrices $G_S,H_S,F_S$ and $G^x_T, G_T^u, H_T^{x}, H_T^{u}, F^x_T$ of appropriate sizes such that
\begin{multline*}
   \begin{bmatrix} sE_{11}-A_{11} & sE_{12}-A_{12} & sE_{13}-A_{13} \\
  0 & sE_{22}-A_{22} & sE_{23}-A_{23} \\ 0 & 0 & sE_{33}-A_{33}\end{bmatrix} \begin{bmatrix} I & G_T^x & H_T^{x}\\0 & I & F_T^x \\ 0 & 0 & I \end{bmatrix} + \begin{bmatrix} B_{11} & 0 & B_{13} \\ 0 & 0 & 0 \\ 0 & 0 & B_{33} \end{bmatrix} \begin{bmatrix}0 & G_T^u & H_T^{u} \\ 0 & 0 & 0 \\ 0 & 0 & 0\end{bmatrix} \\
  = \begin{bmatrix}I & -G_S & -H_S \\ 0 & I & -F_S \\ 0 & 0 & I \end{bmatrix} \begin{bmatrix} sE_{11}-A_{11} & 0 & 0 \\
  0 & sE_{22}-A_{22} & 0 \\ 0 & 0 & sE_{33}-A_{33} \end{bmatrix}
\end{multline*}
and
\[
    \begin{bmatrix} B_{11} & 0 & B_{13} \\ 0 & 0 & 0 \\ 0 & 0 & B_{33} \end{bmatrix}  =  \begin{bmatrix}I & -G_S & -H_S \\ 0 & I & -F_S \\ 0 & 0 & I \end{bmatrix}\begin{bmatrix} B_{11} & 0 & 0 \\ 0 & 0 & 0 \\ 0 & 0 & B_{33} \end{bmatrix}.
\]
This holds
if, and only if, the following matrix equations have solutions:
\begin{subequations}\label{eq:GiFiHi}
\begin{align}
   &\begin{aligned}
     0 &= A_{12}  + [A_{11}, -B_{11}] \begin{smallbmatrix} G_T^x \\ G_T^u \end{smallbmatrix} + G_S A_{22}, \\[-0.5ex]
     0 &= E_{12} + [E_{11},0] \begin{smallbmatrix} G_T^x \\ G_T^u \end{smallbmatrix} + G_S E_{22};
   \end{aligned} \label{eq:Gi}\\[1ex]
  &\begin{aligned}
      0 &= [A_{23},0] + A_{22} [ F_T^x, 0] + F_S [A_{33},-B_{33}],\\[-0.5ex]
      0 &= [E_{23},0] + E_{22} [F_T^x, 0] + F_S [E_{33},0];
   \end{aligned}\label{eq:Fi}\\[1ex]
  &\begin{aligned}
      0 &= {A_{12} F_T^x + A_{13} + [A_{11},-B_{11}] \begin{smallbmatrix} H_T^{x} \\ H_T^{u} \end{smallbmatrix} + H_S A_{33}},\\[-0.5ex]
      0 &= {E_{12} F_T^x + E_{13} + [E_{11},0] \begin{smallbmatrix} H_T^{x} \\ H_T^{u} \end{smallbmatrix} + H_S E_{33}},\\[-0.5ex]
      0 & = - B_{13} - H_S B_{33}.
     \end{aligned}\label{eq:Hi}
\end{align}
\end{subequations}
In the following we show that each of the sets of equations above admits a solution, where we use Lemmas~\ref{lem:twoEqs2genSylvester} and~\ref{lem:Sylvester-equation}.
\\

\noindent
\emph{We show that~\eqref{eq:Gi} has a solution.}
\\
Clearly, \eqref{eq:Gi} has the form~\eqref{eq:twoMatrixEqs} with $\mathrm{A} = [A_{11},-B_{11}]$, $\mathrm{D} = A_{22}$, $\mathrm{C} = [E_{11},0]$, $\mathrm{B} = E_{22}$.
Since $E_{22}$ is invertible, there exists~$\lambda\in\R$ such that $\lambda E_{22}-A_{22}$ is also invertible and hence the assumption of Lemma~\ref{lem:twoEqs2genSylvester} is satisfied.
Therefore, it suffices to show solvability of the associated generalized Sylvester equation~\eqref{eq:genSylvester}. By assumption, $\rank[\lambda E_{11} - A_{11},B_{11}]=\ell_1$ for all $\lambda\in\C\cup\{\infty\}$, i.e.\ no rank drop occurs at all, so all assumptions of Lemma~\ref{lem:Sylvester-equation} are satisfied and existence of a solution $\begin{smallbmatrix} G_T^x \\ G_T^u \end{smallbmatrix}$ and $G_S$ of~\eqref{eq:Gi} is shown.
\\

\noindent
\emph{We show that~\eqref{eq:Fi} has a solution.}
\\
We consider first a relaxed version of~\eqref{eq:Fi} by replacing $[F_T^x,0]$ by $[F_T^x,F_T^u]$ in both equations of~\eqref{eq:Fi}. With $\mathrm{A} = A_{22}$, $\mathrm{D} = [A_{33},-B_{33}]$, $\mathrm{C}=E_{22}$, $\mathrm{B}=[E_{33},0]$ we again see that all assumptions of Lemmas~\ref{lem:twoEqs2genSylvester} and Lemma~\ref{lem:Sylvester-equation} are satisfied ensuring solvability of the relaxed version of~\eqref{eq:Fi}. From the second relaxed equation of~\eqref{eq:Fi} we see that, in particular,
\[
   0 = 0 + E_{22} F_T^u + 0,
\]
which, due to the invertibility of~$E_{22}$,
 immediately yields that~$F_T^u=0$.
 This shows solvability of the original equations~\eqref{eq:Fi}.
\\

\noindent
\emph{We show that~\eqref{eq:Hi} has a solution.}
\\
Since by assumption $[A_{33},-B_{33}]$ has full column rank, there exists an invertible row operation $R_{33}\in\R^{\ell_3\times \ell_3}$ such that
\[
   [A_{33}, -B_{33}] = R_{33} \begin{bmatrix} A_{33}^x & 0 \\ 0 & I_{m_3}\end{bmatrix},\quad A_{33}^x\in\R^{(\ell_3-m_3)\times n_3}.
\]
Define $[H_S^x,H_S^u]:=H_S R_{33}$, then the last equation of~\eqref{eq:Hi} simplifies to
\[
   0 = - B_{13} + H_S^u.
\]
This is solvable with $H_S^u = B_{13}$. Let $\begin{smallbmatrix} E^x_{33} \\ E^u_{33} \end{smallbmatrix} :=R_{33}^{-1} E_{33}$ with $E_{33}^x\in\R^{(\ell_3-m_3)\times n_3}$ and $E_{33}^u\in\R^{m_3\times n_3}$, then the first two equations of~\eqref{eq:Hi} have the form
\begin{equation}\label{eq:HTHS}
   \begin{aligned}
      0 &= \widetilde{A}_{13} + [A_{11},-B_{11}] \begin{smallbmatrix} H_T^{x} \\ H_T^{u} \end{smallbmatrix} + H_S^x A^x_{33},\\[-0.5ex]
      0 &= \widetilde{E}_{13} + [E_{11},0] \begin{smallbmatrix} H_T^{x} \\ H_T^{u} \end{smallbmatrix} + H_S^x E^x_{33},
   \end{aligned}
\end{equation}
where $\widetilde{A}_{13} = A_{12} F_T^x + A_{13}$ and $\widetilde{E}_{13} = E_{12}F_T^x + E_{13} + H_S^u E^u_{33}$. Since $A^x_{33}$ has full column rank, the pencil $sE^x_{33}-A^x_{33}$ has full polynomial column rank and by assumption  $\rank(\lambda [E_{11},0] - [A_{11},-B_{11}])=\ell_1$ for all $\lambda\in\C\cup\{\infty\}$. So Lemma~\ref{lem:Sylvester-equation} together with Remark~\ref{rem:transposeTwoMatrixEqs} is applicable to~\eqref{eq:HTHS} and guarantees existence of $H_T^{x}$, $H_T^{u}$, $H_S^x$ satisfying~\eqref{eq:HTHS}.
Now set $H_S = R_{33}^{-1} [H_S^x,H_S^u]$,
and the proof is complete.
\end{proof}

A notable observation from the proof of Proposition~\ref{Prop:decoupledQPFF} is that no input transformation is needed (i.e., $V=I$ in~\eqref{eq:P-feedbequiv}) to arrive at a decoupled~QPFF.

We  stress some important properties of the augmented Wong limits for systems in
decoupled~QPFF.

\begin{Lemma}\label{lem:Wong-decoupledQPFF}
For any $[E,A,B]\in\Sigma_{\ell,n,m}$ in decoupled~QPFF, the augmented Wong sequences satisfy:
\[
   \cV^*_{[E,A,B]}\cap\cW^*_{[E,A,B]} = \R^{n_1}\times\{0\}^{n_2+n_3},\quad \cV^*_{[E,A,B]} = \R^{n_1+n_2} \times \{0\}^{n_3}
\]
and
\[
   E(\cV^*_{[E,A,B]} \cap \cW^*_{[E,A,B]}) = \R^{\ell_1}\times \{0\}^{\ell_2+\ell_3},\quad E\cV^*_{[E,A,B]} = \R^{\ell_1+\ell_2}\times\{0\}^{\ell_3}.
\]
Furthermore, we have that $m_1 = m - m_2 -m_3$, where $m_2 = \dim \ker B$ and
\begin{equation}\label{eq:QPFF-m3}
    m_3 = \dim \Big( \im B \cap (\{0\}^{\ell_1+\ell_2}\times \R^{\ell_3})\Big).
\end{equation}
In particular (in view of Lemma~\ref{lem:Wong-Pfb} and Proposition~\ref{Prop:decoupledQPFF}), two~QPFFs which are P-feedback equivalent have the same block sizes in $E$, $A$ and $B$.
\end{Lemma}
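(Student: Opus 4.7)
The plan is to exploit the block-diagonal structure of a decoupled QPFF. A direct induction on the recursions of Definition~\ref{Def:AWS} yields
\[
   \cV^*_{[E,A,B]} = \cV^*_{[E_{11},A_{11},B_{11}]}\times\cV^*_{[E_{22},A_{22},0]}\times\cV^*_{[E_{33},A_{33},B_{33}]},
\]
and an analogous identity for $\cW^*$, reducing the first four claims to a block-by-block analysis of the three diagonal sub-systems.

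Block~2 is immediate: since $E_{22}\in\Gl_{n_2}(\R)$, one reads off $\cV^*_{[E_{22},A_{22},0]}=\R^{n_2}$ and $\cW^*_{[E_{22},A_{22},0]}=\{0\}$ straight from the recursions. For block~1 the full row rank hypothesis $\rk E_{11}=\ell_1$ gives $\cV^1_{[E_{11},A_{11},B_{11}]}=A_{11}^{-1}(\R^{\ell_1})=\R^{n_1}$, hence $\cV^*_{[E_{11},A_{11},B_{11}]}=\R^{n_1}$; for the matching identity $\cW^*_{[E_{11},A_{11},B_{11}]}=\R^{n_1}$ I would pass, via Proposition~\ref{Prop:AWS_properties}(c), to the Wong limit of the non-augmented pencil $s[E_{11},0]-[A_{11},B_{11}]$. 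The combined conditions $\rk E_{11}=\ell_1$ and $\rk_\C[\lambda E_{11}-A_{11},B_{11}]=\ell_1$ for all $\lambda\in\C$ yield full row rank on $\C\cup\{\infty\}$, so the Kronecker canonical form contains only right Kronecker blocks; for each such block $\cW^*$ equals the whole space, and projecting onto the first $n_1$ coordinates gives $\cW^*_{[E_{11},A_{11},B_{11}]}=\R^{n_1}$. For block~3 the full column rank condition~(iii) of Definition~\ref{def:QPFF-neu} rules out right Kronecker and finite Jordan summands in the KCF of the augmented pencil; the remaining nilpotent and left Kronecker summands all contribute $\{0\}$ to $\cV^*$, so $\cV^*_{[E_{33},A_{33},B_{33}]}=\{0\}$, which suffices ($\cW^*_{33}$ may be non-trivial but never enters the claim, since $\cV^*_{33}\cap\cW^*_{33}=\{0\}$ follows already from $\cV^*_{33}=\{0\}$). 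Assembling the three contributions yields the formulas for $\cV^*\cap\cW^*$ and $\cV^*$, and applying $E$ blockwise, using $\rk E_{11}=\ell_1$ and invertibility of $E_{22}$, gives the corresponding formulas for $E(\cV^*\cap\cW^*)$ and $E\cV^*$.

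For the input-block dimensions, $B_{11}$ has column rank $m_1$ by hypothesis, and evaluating condition~(iii) at $\lambda=0$ shows that $[A_{33},B_{33}]$ has full column rank $n_3+m_3$, whence $B_{33}$ has full column rank $m_3$. Therefore $\ker B=\{0\}^{m_1}\times\R^{m_2}\times\{0\}^{m_3}$ has dimension $m_2$, and $\im B\cap(\{0\}^{\ell_1+\ell_2}\times\R^{\ell_3})=\{0\}^{\ell_1+\ell_2}\times\im B_{33}$ has dimension $m_3$, forcing $m_1=m-m_2-m_3$. The block-size invariance is then pure bookkeeping: by Proposition~\ref{Prop:decoupledQPFF} each QPFF is P-feedback equivalent to a decoupled one with identical diagonal blocks, and by Lemma~\ref{lem:Wong-Pfb} the augmented Wong limits transform as $T^{-1}(\cdot)$; under the full P-feedback action one also has $E\cV^*\mapsto SE\cV^*$, $\ker B\mapsto V^{-1}\ker B$ and $\im B\cap E\cV^*\mapsto S(\im B\cap E\cV^*)$ with invertible $S,T,V$. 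Consequently, every dimension on the right-hand side of $n_1=\dim(\cV^*\cap\cW^*)$, $n_1+n_2=\dim\cV^*$, $\ell_1=\dim E(\cV^*\cap\cW^*)$, $\ell_1+\ell_2=\dim E\cV^*$, $m_2=\dim\ker B$, $m_1=\dim(\im B\cap E\cV^*)$, together with the totals $n$, $\ell$ and $m=m_1+m_2+m_3$, is a P-feedback invariant, which yields the claim.

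The main obstacle will be the Kronecker-canonical-form step for blocks~1 and~3, namely carefully translating each rank condition (row versus column, on $\C$ versus $\C\cup\{\infty\}$) into the absence of every forbidden Jordan, nilpotent, or opposite-type Kronecker summand, and then invoking the textbook Wong-limit formulas for the remaining block types. Everything else reduces to direct block-matrix computation.
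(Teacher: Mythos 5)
Your proposal is correct and follows essentially the same route as the paper's proof: decompose the augmented Wong limits blockwise using the decoupled structure, handle block~2 via invertibility of $E_{22}$, translate the rank conditions on blocks~1 and~3 into the (quasi-)Kronecker structure of the corresponding augmented pencils and project back via Proposition~\ref{Prop:AWS_properties}(c), and read off the $E$-images and the input dimensions $m_2=\dim\ker B$, $m_3=\dim\im B_{33}$ from the full-rank hypotheses. The only cosmetic difference is that you make the final block-size-invariance bookkeeping (via $T^{-1}$, $S$, $V^{-1}$ transformations of the relevant subspaces) more explicit than the paper, which simply cites Lemma~\ref{lem:Wong-Pfb} and Proposition~\ref{Prop:decoupledQPFF}.
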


\begin{Proof}
\emph{Step 1}: We show $\cV^*_{[E,A,B]} = \R^{n_1+n_2} \times \{0\}^{n_3}$.
\\
Since $[E,A,B]$ is in decoupled~QPFF, $\cV^*_{[E,A,B]} = \cV^*_{[E_{11},A_{11},B_{11}]}\times \cV^*_{[E_{22},A_{22},0]}\times \cV^*_{[E_{33},A_{33},B_{3}]}$. Since both~$E_{11}$ and~$E_{22}$ have full row rank, they are surjective. It  follows inductively that
$\cV^i_{[E_{11},A_{11},B_{11}]}=\R^{n_1}$ and $\cV^i_{[E_{22},A_{22},0]}=\R^{n_2}$ for all~$i\geq 0$. It remains to show that $\cV^*_{[E_{33},A_{33},B_{33}]}=\{0\}^{n_3}$.
Since $\lambda [E_{33},0]-[A_{33},B_{33}]$ has full column rank for all $\lambda\in\C$, its quasi-Kronecker form~\cite{BergTren13}
has  only a nilpotent and a overdetermined part, in particular, $\cV^*_{[[E_{33},0],[A_{33},B_{33}],0]} = \{0\}^{n_3+m_3}$. Now the claim follows from~\eqref{eq:augWong_WongAug}.
\\

\noindent
\emph{Step 2}: We show $\R^{n_1}\times\{0\}^{n_2}\times\{0\}^{n_3} \subseteq \cW^*_{[E,A,B]} \subseteq \R^{n_1}\times\{0\}^{n_2}\times\R^{n_3}$.

Again, since $[E,A,B]$ is in decoupled~QPFF we have
$\cW^*_{[E,A,B]} = \cW^*_{[E_{11},A_{11},B_{11}]}\times \cW^*_{[E_{22},A_{22},0]}\times \cW^*_{[E_{33},A_{33},B_{3}]}$. Thus, it suffices to show that $\cW^*_{[E_{11},A_{11},B_{11}]} = \R^{n_1}$ and $\cW^*_{[E_{22},A_{22},0]} = \{0\}$. The latter is a simple consequence of invertibility of~$E_{22}$ and that $\cW^0_{[E_{22},A_{22},0]} = \{0\}$; for the former we observe that the augmented matrix pencil $s[E_{11},0]-[A_{11},B_{11}]$ is an underdetermined~DAE in the sense of \cite{BergTren12} and hence $\cW^*_{[[E_{11},0],[A_{11},B_{11}],0]}=\R^{n_1+m_1}$. Invoking again~\eqref{eq:augWong_WongAug} we can conclude the claim.
\\

\noindent
\emph{Step 3}: We conclude the claimed properties of the augmented Wong-sequences in the statement of the lemma. The first two equations follow from Steps~1 and~2.
The third and fourth equation follow from the block structure of~$E$ and full row rank of~$E_{11}$ and $E_{22}$.
\\

\noindent
\emph{Step 4}: We show~\eqref{eq:QPFF-m3}. It is easy to see that
\[
    \im B \cap (\{0\}^{\ell_1+\ell_2}\times \R^{\ell_3}) = \{0\}^{\ell_1+\ell_2}\times \im B_{33},
\]
and  the full column rank of~$B_{33}$ yields $\dim \im B_{33} = m_3$. This proves~\eqref{eq:QPFF-m3}.
\end{Proof}

In the following we derive the~QPFF by choosing basis matrices according to the augmented Wong sequences. This has the advantage that the transformation provides some geometric insight. We are now in the position to show that any system $[E,A,B]$ is equivalent to a system in~QPFF.

\begin{Theorem}[Quasi P-feedback form]\label{thm:QPFF}
 Consider $[E,A,B]\in\Sigma_{\ell,n,m}$ with corresponding augmented Wong limits~$\cV^*_{[E,A,B]}$ and~$\cW^*_{[E,A,B]}$. Choose full column rank matrices $U_T\in\R^{n\times n_{1}}$, $R_T\in\R^{n\times n_{2}}$, $O_T\in\R^{n\times n_{3}}$, $U_S\in\R^{\ell\times \ell_{1}}$, $R_S\in\R^{\ell\times \ell_{2}}$, $O_S\in\R^{\ell\times \ell_{3}}$ such that
 \[\begin{aligned}
    \im U_T &= \cV^*_{[E,A,B]}\cap\cW^*_{[E,A,B]},\\
     \im R_T \oplus \im U_T  &= \cV^*_{[E,A,B]}, \\
     \im O_T \oplus  \im R_T \oplus\im U_T &= \R^n, \\[2ex]
    \im U_S &= E\cV^*_{[E,A,B]} \cap (A\cW^*_{[E,A,B]}+\im B),\\ 
     \im R_S \oplus  \im U_S &= E\cV^*_{[E,A,B]},\\
    \im O_S \oplus  \im R_S \oplus  \im U_S &= \R^{\ell}
    \end{aligned}
 \]
 and, additionally,
 \begin{equation}\label{eq:cond-B-O2}
    \im B \subseteq \im [U_S, O_S].
 \end{equation}
 Let $T:=[U_T,R_T,O_T]$, $S:=[U_S,R_S,O_S]^{-1}$ and further choose (not necessarily full rank) matrices $F_{1}\in\R^{m\times n_{1}}$, $F_{2}\in\R^{m\times n_{2}}$ such that
  \begin{equation}\label{eq:F1_F2_choice}
      [0,I_{\ell_2+\ell_3}] S (A U_T + B F_1) = 0
\qquad \text{and}\qquad
      [0,0,I_{\ell_3}] S (A R_T + B F_2) = 0,
  \end{equation}
  and let $F_P = [F_{1},F_{2},0]$. Finally, let $V=[V_{1},V_2,V_3]$, where $V_{1},V_2,V_3$ are full column rank matrices with $\im V_1 \oplus \im V_2 \oplus \im V_3 = \R^m$ such that
 \begin{equation}\label{eq:V1_V2_prop}
     \im V_2 = \ker B,\quad\text{and}\quad \im [V_{1},V_2] = \ker [0,0,I_{\ell_3}] S B.
 \end{equation}
 Then $[SET,S(AT+BF_P),SBV]$ is in~QPFF~\eqref{eq:QPFF}.
\end{Theorem}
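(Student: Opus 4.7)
The plan is to verify the three defining structural conditions of a QPFF from Definition~\ref{def:QPFF-neu} in turn.

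First, I would establish the prescribed block structure of $SET$, $S(AT+BF_P)$ and $SBV$ by relying entirely on the invariance properties of the augmented Wong limits from Proposition~\ref{Prop:AWS_properties}. The key identity is
\[
    E\,\im U_T \ = \ E(\cV^*\cap\cW^*) \ = \ E\cV^* \cap (A\cW^* + \im B) \ = \ \im U_S,
\]
which comes from~\eqref{eq:E(VcapW)_A(VcapW)} and, together with $E\,\im R_T \subseteq E\cV^* = \im[U_S,R_S]$, forces the block upper triangular shape of $SET$. For $S(AT+BF_P)$, I would first argue that the conditions on $F_1,F_2$ in~\eqref{eq:F1_F2_choice} are solvable: from~\eqref{eq:EVBcapAWB} combined with~\eqref{eq:E(VcapW)_A(VcapW)},
\[
    A(\cV^*\cap\cW^*) + \im B \ = \ E(\cV^*\cap\cW^*) + \im B \ = \ \im U_S + \im B,
\]
so $A\,\im U_T \subseteq \im U_S + \im B$ yields a suitable $F_1$, and $A\,\im R_T \subseteq A\cV^* \subseteq E\cV^* + \im B$ yields $F_2$. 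The block triangular shape of $S(AT+BF_P)$ is then immediate. For $SBV$, the three required zero blocks come respectively from~\eqref{eq:cond-B-O2} (annihilating the middle block row of $SB$), from $\im V_2 = \ker B$ (annihilating the middle column), and from $\im V_1 \subseteq \ker([0,0,I_{\ell_3}]SB)$ (annihilating the bottom block of the first column of $SBV$).

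Second, I would verify condition~(ii): $\ell_2 = n_2$ and $E_{22}\in\Gl_{n_2}(\R)$. From $E\cV^* = E\,\im U_T + E\,\im R_T = \im U_S + E\,\im R_T$, the induced map $\bar E : \im R_T \to E\cV^*/\im U_S$ is surjective; for injectivity, if $Er \in \im U_S = E(\cV^*\cap\cW^*)$ for some $r \in \im R_T \subseteq \cV^*$, there exists $w \in \cV^*\cap\cW^*$ with $r-w \in \ker E \subseteq \cW^*$, forcing $r \in \cV^*\cap\cW^* = \im U_T$; the direct-sum condition $\im R_T \cap \im U_T = \{0\}$ then yields $r=0$. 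Hence $n_2 = \dim(E\cV^*/\im U_S) = \ell_2$, and $E_{22}$, being the matrix of $\bar E$ with respect to the chosen bases, is invertible.

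Third, the rank conditions in~(i) and~(iii) are the main obstacle. The plan here is to lift to the augmented pencil $s[E,0]-[A,B]\in\R[s]^{\ell\times(n+m)}$ and combine Proposition~\ref{Prop:AWS_properties}(c) with the quasi-Kronecker decomposition via Wong sequences from~\cite{BergTren12,BergTren13}. The projection~\eqref{eq:augWong_WongAug} should identify the first QPFF block with the underdetermined part of the augmented pencil (which has full row rank at every $\lambda\in\C\cup\{\infty\}$, yielding $\rk E_{11} = \ell_1$, $\rk_\C[\lambda E_{11}-A_{11},B_{11}]=\ell_1$, and the strict inequality $\ell_1 < n_1+m_1$ from its ``wider than tall'' shape), and the third block with the overdetermined part (giving condition~(iii)). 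The remaining rank $\rk B_{11} = m_1$ follows from~\eqref{eq:V1_V2_prop}: $\im V_1\cap\ker B = \{0\}$ makes $BV_1$ of full column rank $m_1$, and since $\im BV_1\subseteq\im U_S$ with $U_S$ of full column rank, the block $B_{11}$ defined by $U_S B_{11} = BV_1$ inherits rank $m_1$.
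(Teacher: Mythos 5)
Your first two parts are sound and essentially coincide with Steps~1 and~3 of the paper's proof: the block structure follows from the invariance relations \eqref{eq:AWS_invariance}--\eqref{eq:EVBcapAWB} exactly as you describe (including the solvability of \eqref{eq:F1_F2_choice} and the three zero blocks of $SBV$), and your quotient-map argument for $\ell_2=n_2$ and the invertibility of $E_{22}$ is a clean reformulation of the paper's Steps~3a--3c. The full column rank arguments for $B_{11}$ and $B_{33}$ are also correct.

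The gap is in your third part. You assert that the projection formula \eqref{eq:augWong_WongAug} ``identifies'' the first QPFF block with the underdetermined part of the quasi-Kronecker form of the augmented pencil and the third block with the overdetermined part, but this identification is precisely what has to be proved, and it does not follow from \eqref{eq:augWong_WongAug} alone. The transformed system is only block \emph{upper triangular}: the coupling blocks $E_{12},E_{13},A_{12},A_{13},E_{23},A_{23}$ and, crucially, $B_{13}$ are in general nonzero, so the augmented Wong sequences of $[\widehat E,\widehat A,\widehat B]$ do not split as products of the augmented Wong sequences of the diagonal block systems. (The paper is explicit that P-feedback equivalence is a proper subrelation of the pencil equivalence underlying the Kronecker form, so one cannot simply import the QKF decomposition of $s[E,0]-[A,B]$ into these constrained coordinates.) Concretely, to obtain $\rk_\C[\lambda E_{11}-A_{11},B_{11}]=\ell_1$ for all $\lambda$ one must first establish $\cV^*_{[E_{11},A_{11},B_{11}]}\cap\cW^*_{[E_{11},A_{11},B_{11}]}=\R^{n_1}$; the paper does this (Step~2b) by chasing a chain of witnesses $y_k,u_k$ through the Wong recursions of the \emph{coupled} system and using the full column rank of $B_{33}$ to force all $u_3$-components to vanish, so that the $B_{13}$-coupling drops out. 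Only after that, and after lifting to the augmented sub-pencil $s[E_{11},0]-[A_{11},B_{11}]$ (Step~2c), can one invoke the quasi-Kronecker theory to conclude that this sub-pencil is purely underdetermined. Likewise, condition~(iii) for $[E_{33},A_{33},B_{33}]$ is not obtained by block identification but by a separate induction (Step~4): from $(\lambda E_{33}-A_{33})x_3+B_{33}u_3=0$ one shows $O_T\overline{x}_3,O_T\widehat{x}_3\in\cV^k_{[E,A,B]}$ for every $k$, whence $O_Tx_3\in\im O_T\cap\cV^*_{[E,A,B]}=\{0\}$ and then $u_3=0$. These two inductive arguments are the substance of the theorem, and your outline does not supply them.
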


\begin{Proof}
\emph{Step 1}: We show that the block structure of the~QPFF~\eqref{eq:QPFF} is achieved. The subspace inclusions~\eqref{eq:AWS_invariance} imply that
\[
  \begin{aligned}
     \im E U_T &\subseteq \im U_S,&\quad \im A U_T &\subseteq \im U_S + \im B,\\
     \im E R_T &\subseteq \im [U_S,R_S],&\quad \im A R_T &\subseteq \im [U_S,R_S] + \im B,\\
     \im E O_T &\subseteq \im [U_S,R_S,O_S]=\R^n,&\quad \im A O_T &\subseteq \im[U_S,R_S,O_S]=\R^\ell,
  \end{aligned}
\]
hence there exists matrices $E_{11}, E_{12}, E_{13}, E_{22}, E_{23}, E_{33}, A_{11}, A_{12}, A_{13}, A_{22}, A_{23}, A_{33}, F_{1}, F_{2}$ such that
\begin{equation}\label{eq:QPFF-submatrices}
\begin{aligned}
   E U_T &= U_S E_{11}, &\quad A U_T &= U_S A_{11} - B F_{1},\\
   E R_T &= U_S E_{12} + R_S E_{22},& A R_T &= U_S A_{12} + R_S A_{22} - B F_{2},\\
   E O_T &= U_S E_{13} + R_S E_{23} + O_S E_{33}, &    A O_T &= U_S A_{13} + R_S A_{23} + O_S A_{33}.
   \end{aligned}
\end{equation}
Note that, in particular, $F_1$ and $F_2$ satisfy the equations~\eqref{eq:F1_F2_choice}, which hence have solutions. Conversely, for any solution $F_1$ and $F_2$ of~\eqref{eq:F1_F2_choice}, the matrices $A_{11}:=[I_{\ell_1},0,0] S (A U_T + B F_1)$ and $\begin{smallbmatrix} A_{12} \\ A_{22} \end{smallbmatrix} := [I_{\ell_1+\ell_2},0] S (A R_T + B F_2)$ are suitable choices for satisfying~\eqref{eq:QPFF-submatrices}.

Observe that~\eqref{eq:QPFF-submatrices} implies that $S E T$ and $S(AT+BF)$ have the desired block structure of a~QPFF~\eqref{eq:QPFF}.
Furthermore, since
\[
    \im U_S \cap \im B = E\cV^*_{[E,A,B]} \cap \im B  = \im[U_S,R_S] \cap \im B,
\]
one may always choose some~$O_S$
such  that~\eqref{eq:cond-B-O2}  holds.
Therefore, we may choose matrices $\widetilde{B}_1$, $\widetilde{B}_2$ such that
\[
    B = U_S \widetilde{B}_{1} + O_S\widetilde{B}_{2}
\]
holds, or, equivalently,
\[
    S B = \begin{bmatrix} \widetilde{B}_{1} \\ 0 \\ \widetilde{B}_{2} \end{bmatrix}.
\]
Finally, by construction we have $[0,0,I_{\ell_3}] S B V_{1} = 0$ and $SBV_2 = 0$, hence
\[
   S B V = \begin{bmatrix} B_{11} & 0 & B_{13} \\ 0 & 0 & 0 \\ 0 & 0 & B_{33} \end{bmatrix},
\]
which concludes Step~1. For later use we note that $0 = B_{33} x = [0,0,I_{\ell_3}] S B V_3 x$ implies that $V_3 x \in \ker [0,0,I_{\ell_3}] S B \cap \im V_3 = \im [V_1,V_2] \cap \im V_3 = \{0\}$, thus $B_{33}$ has full column rank. Similarly, $0=B_{11}x = [I_{\ell_1},0,0] S B V_1 x$ implies that $V_1 x \in \ker [I_{\ell_1},0,0] S B \cap \im V_1 \subseteq \ker [I_{\ell_1},0,0] S B \cap \ker[0,0,I_{\ell_3}] SB = \ker SB = \im V_2$, hence $V_1 x\in\im V_1 \cap \im V_2 = \{0\}$ which shows that $B_{11}$ is also of full column rank.
\\

\noindent\emph{Step 2}: We show that $[E_{11},A_{11},B_{11}]$ satisfies Definition~\ref{def:QPFF-neu}\,(i).
\\
\noindent\emph{Step 2a}: We show that $\rk E_{11} = \ell_1$.
Observe that $U_S E_{11} = E U_T$ yields
\[
    \im U_S E_{11} = \im E U_T = E(\cV^*_{[E,A,B]} \cap \cW^*_{[E,A,B]}) \overset{\rm Prop.~\ref{Prop:AWS_properties}}{=} E\cV^*_{[E,A,B]} \cap (A\cW^*_{[E,A,B]}+\im B) = \im U_S.
\]
As a consequence, the full column rank of~$U_S$ gives that~$E_{11}$ has full row rank.\\
\noindent\emph{Step 2b}: We show that $\cV^*_{[E_{11}, A_{11}, B_{11}]}\cap \cW^*_{[E_{11}, A_{11}, B_{11}]} = \R^{n_1}$.
\\
Set $[\widehat{E},\widehat{A},\widehat{B}] := [SET, S(AT+BF_P), SBV]$. Invoking Lemma~\ref{lem:Wong-Pfb} we can conclude that
\[
  \cV^*_{[\widehat{E},\widehat{A},\widehat{B}]}\cap\cW^*_{[\widehat{E},\widehat{A},\widehat{B}]}
  = T^{-1} (\cV^*_{[E,A,B]} \cap \cW^*_{[E,A,B]}) = T^{-1} (\im U_T) = \R^{n_1} \times \{0\}^{n_2+n_3}.
\]
From~\eqref{eq:EVBcapAWB} we may further infer that
\[\begin{aligned}
   \widehat{A}(\cV^*_{[\widehat{E},\widehat{A},\widehat{B}]}\cap\cW^*_{[\widehat{E},\widehat{A},\widehat{B}]}) &\subseteq \widehat{E}(\cV^*_{[\widehat{E},\widehat{A},\widehat{B}]}\cap\cW^*_{[\widehat{E},\widehat{A},\widehat{B}]}) + \im \widehat{B}.
   \end{aligned}
\]
Hence for all $x_1\in\R^{n_1}$, there exist $y_1, z_1\in\R^{n_1}$ and $u_1,v_1\in\R^{m_1}$, $u_2,v_2\in\R^{m_2}$, $u_3,v_3\in\R^{m_3}$ such that
\[
    \widehat{A} \begin{pmatrix} x_1 \\ 0 \\ 0 \end{pmatrix} = \widehat{E} \begin{pmatrix} y_1 \\ 0 \\ 0 \end{pmatrix} + \widehat{B} \begin{pmatrix} u_1 \\ u_2 \\ u_3 \end{pmatrix}\quad \text{ and }\quad {\widehat{A}} \begin{pmatrix} y_1 \\ 0 \\ 0 \end{pmatrix} = {\widehat{E}} \begin{pmatrix} z_1 \\ 0 \\ 0 \end{pmatrix} + \widehat{B} \begin{pmatrix} v_1 \\ v_2 \\ v_3 \end{pmatrix}.
\]
From the block diagonal structure, we can conclude that
\[\begin{aligned}
   A_{11} x_1 &= E_{11} y_1 + B_{11} u_1 + B_{13} u_3,\\
   0 &= B_{33} u_3.
   \end{aligned}
\]
Since $\ker B_{33} = \{0\}$, we find that $u_3 = 0$, thus we may conclude that
\[
   x_1 \in A_{11}^{-1} (E_{11} \{y_1\} + \im B_{11})\subseteq \cV^1_{[E_{11},A_{11},B_{11}]}.
\]
With the same  reasoning, one  may show that
\[
   y_1 \in A_{11}^{-1} (E_{11} \{z_1\} + \im B_{11})\subseteq \cV^1_{[E_{11},A_{11},B_{11}]}
\]
and hence $x_1\in\cV^2_{[E_{11},A_{11},B_{11}]}$.
Continuing this reasoning, it finally follows that $x_1\in\cV^k_{[E_{11},A_{11},B_{11}]}$ for all~$k\in\N$,
 and, since~$x_1$ is arbitrary, we have shown that $\cV^*_{[E_{11},A_{11},B_{11}]} = \R^{n_1}$.

Now, let $j^*,r^*\in\N$ be such that $\cW^*_{[\widehat{E},\widehat{A},\widehat{B}]}  = \cW^{j^*}_{[\widehat{E},\widehat{A},\widehat{B}]}$ and $\cW^*_{[E_{11}, A_{11}, B_{11}]} = \cW^{r^*}_{[E_{11}, A_{11}, B_{11}]}$ and set $q^* := \max\{j^*,r^*\}$. Again, take arbitrary $x = (x_1^\top, 0, 0)^\top \in \cV^*_{[\widehat{E},\widehat{A},\widehat{B}]} \cap \cW^*_{[\widehat{E},\widehat{A},\widehat{B}]}$, then $x\in \cW^{q^*}_{[\widehat{E},\widehat{A},\widehat{B}]}$ and Definition~\ref{Def:AWS} give that there exist $y_k\in\cW^{k}_{[\widehat{E},\widehat{A},\widehat{B}]}$ and $u_k\in\R^m$, $k=0,\ldots,q^*-1$,
 such that $\widehat Ex = \widehat A y_{q^*-1} + \widehat B u_{q^*-1}$ and $\widehat Ey_k = \widehat Ay_{k-1} + \widehat B u_{k-1}$ for all $k=0,\ldots,q^*-1$. Now we find that
\[
      \widehat Ay_{q^*-1}
      = \widehat E x - \widehat Bu_{q^*-1} \in \widehat E\big( \cV^*_{[\widehat{E},\widehat{A},\widehat{B}]} \cap \cW^*_{[\widehat{E},\widehat{A},\widehat{B}]}\big) + \im \widehat B
    \ \ \overset{\eqref{eq:EVBcapAWB}}{=}
    \ \
    \widehat A\big( \cV^*_{[\widehat{E},\widehat{A},\widehat{B}]} \cap \cW^*_{[\widehat{E},\widehat{A},\widehat{B}]}\big) + \im \widehat B,
\]
whence
\begin{align*}
    y_{q^*-1} &\in \widehat A^{-1}\Big(\widehat A\big( \cV^*_{[\widehat{E},\widehat{A},\widehat{B}]} \cap \cW^*_{[\widehat{E},\widehat{A},\widehat{B}]}\big) + \im \widehat B\Big)= \cV^*_{[\widehat{E},\widehat{A},\widehat{B}]} \cap \cW^*_{[\widehat{E},\widehat{A},\widehat{B}]} + \widehat A^{-1}(\im \widehat B)
   {{} \subseteq \cV^*_{[\widehat{E},\widehat{A},\widehat{B}]}}.
\end{align*}
Therefore,
\[
  y_{q^*-1} \in \cV^*_{[\widehat{E},\widehat{A},\widehat{B}]} \cap \cW^{q^*-1}_{[\widehat{E},\widehat{A},\widehat{B}]}.
\]
With the same  reasoning,
we may now conclude inductively
\[
  y_{k} \in \cV^*_{[\widehat{E},\widehat{A},\widehat{B}]} \cap \cW^{k}_{[\widehat{E},\widehat{A},\widehat{B}]},\quad k=q^*-2,\ldots,0.
\]
This implies that $y_k = (y_{k,1}^\top, 0, 0)^\top$ and $u_k = (u_{k,1}^\top, u_{k,2}^\top,u_{k,3}^\top)^\top$ for some $y_{k,1}\in\R^{n_1}$, $u_{k,1}\in\R^{m_1}$, $u_{k,2}\in\R^{m_2}$, $u_{k,3}\in\R^{m_3}$, $k=0,\ldots,q^*-1$, and hence
\begin{multline*}
    E_{11} y_{1,1} = B_{11} u_{0,1} + B_{13} u_{0,3} ,\ E_{11} y_{2,1} = A_{11} y_{1,1} + B_{11} u_{1,1} + B_{13} u_{1,3},\\
     \ldots,\ E_{11} y_{q^*-1,1} = A_{11} y_{q^*-2,1} + B_{11} u_{q^*-2,1} + B_{13} u_{q^*-2,3},\\
      E_{11} x_1 = A_{11} y_{q^*-1,1} + B_{11} u_{q^*-1,1} + B_{13} u_{q^*-1,3},
\end{multline*}
and
\[
    0 = B_{33}  u_{0,3} ,\ 0 = B_{33} u_{1,3},\ \ldots,\ 0 = B_{33} u_{q^*-2,3},\ 0 = B_{33} u_{q^*-1,3}.
\]
Therefore, we obtain $u_{k,3} = 0$ for all $k=0,\ldots,q^*-1$ and, as a consequence, $x_1\in\cW^*_{[E_{11}, A_{11}, B_{11}]}$. Since~$x_1$ was arbitrary we have proved that
\[
    \cV^*_{[E_{11}, A_{11}, B_{11}]}\cap \cW^*_{[E_{11}, A_{11}, B_{11}]} = \R^{n_1}.
\]

\noindent\emph{Step 2c}: We show that $\cV^*_{[[E_{11},0],[A_{11},B_{11}],0]}\cap \cW^*_{[[E_{11},0],[A_{11},B_{11}],0]} = \R^{n_1+m_1}$.
\\
It follows from Proposition~\ref{Prop:AWS_properties} and Step~2b that
\[
   [I_n,0] \cV^*_{[[E_{11},0],[A_{11},B_{11}],0]}\cap [I_n,0]\cW^*_{[[E_{11},0],[A_{11},B_{11}],0]} = \R^{n_1}.
\]
As shown in the proof of Proposition~\ref{Prop:AWS_properties} we have that
\[
    \cW^*_{[[E_{11},0],[A_{11},B_{11}],0]} = [I_{n_1},0]\cW^*_{[[E_{11},0],[A_{11},B_{11}],0]} \times \R^{m_1} = \R^{n_1}\times\R^{m_1}.
\]
Furthermore,
\[
\begin{array}{rcl}
  \cV^*_{[[E_{11},0],[A_{11},B_{11}],0]} &=&
   [A_{11}, B_{11}]^{-1}\big( [E_{11}, 0] \cV^*_{[[E_{11},0],[A_{11},B_{11}],0]} \big)= [A_{11}, B_{11}]^{-1}\big( \im E_{11} \big)\\
  &\overset{\rm Step~2a}{=} &
  [A_{11}, B_{11}]^{-1}\big( \R^{\ell_1} \big) = \R^{n_1 + m_1},
\end{array}
\]
which proves the claim.

\noindent{\emph{Step 2d: Conclusion of Step 2.}}
The result of Step 2c implies that the augmented matrix pencil $s[E_{11},0] - [A_{11},B_{11}]$ is in quasi-Kronecker form, see \cite{BergTren12}, and consists only
 of an underdetermined block. In particular,
$\ell_1<n_1+m_1$ and $\rk(\lambda [E_{11},0] - [A_{11},B_{11}]) = \ell_1$ for all $\lambda\in\C$. Full column rank of $B_{11}$ was already shown in Step 1.
\\

\noindent\emph{Step 3}: We show that $E_{22}$ is square and invertible.\\
\noindent\emph{Step 3a}: We show that $\im U_S \oplus \im E R_T = E \cV^*_{[E,A,B]}$.
\\
Since $\im R_T\subseteq\cV^*_{[E,A,B]}$ and $\im U_S\subseteq E\cV^*_{[E,A,B]}$ it follows that $\im U_S + \im E R_T \subseteq E \cV^*_{[E,A,B]}$. Furthermore,
\[
   E \cV^*_{[E,A,B]} = E \big((\cV^*_{[E,A,B]}\cap\cW^*_{[E,A,B]}) \oplus \im R_T\big)
   \subseteq E (\cV^*_{[E,A,B]}\cap\cW^*_{[E,A,B]}) + \im E R_T
   \overset{\eqref{eq:E(VcapW)_A(VcapW)}}{=} \im U_S + \im E R_T,
\]
hence it remains to be shown that the intersection of $\im U_S$ and $\im E R_T$ is trivial. Towards this goal, let $x\in\im U_S \cap \im E R_T$, then there exists $y\in\im R_T$ with $x=E y$ and, in view of~\eqref{eq:E(VcapW)_A(VcapW)}, there exists $z\in\cV^*_{[E,A,B]}\cap\cW^*_{[E,A,B]}$ such that $x = Ez$. Hence $z-y\in\ker E \subseteq \cW^*_{[E,A,B]}$. From $z\in\cW^*_{[E,A,B]}$ it then follows that $y\in\cW^*_{[E,A,B]}$ and, therefore, $y\in\cW^*_{[E,A,B]}\cap\im R_T = \{0\}$. This implies $x=0$ and completes  the proof of Step~3a.\\
\noindent\emph{Step 3b}: We show $\ell_2 = n_2$.
\\
From Step 3a we have $\ell_2 = \rk E R_T \leq n_2$ and hence it suffices to show that~$E R_T$ has full column rank. Let $v\in\R^{n_2}$ be such that $E R_T v = 0$, then $R_T v \in \im R_T \cap \ker E \subseteq \im R_T \cap \cW^*_{[E,A,B]} = \{0\}$ and due to full column rank of~$R_T$ the claim follows.\\
\noindent\emph{Step 3c}: We show full column rank of $E_{22}$.
\\
Let $v\in\R^{n_2}$ be such that $E_{22}v = 0$. Then by~\eqref{eq:QPFF-submatrices} we have $E R_T v = U_S E_{12} v$ and hence, invoking Step 3a, $E R_T v \in \im E R_T \cap \im U_S = \{0\}$. As already shown in Step 3b, $E R_T v =0$ implies $v=0$ and full column rank of $E_{22}$ is shown.
\\

\noindent\emph{Step 4}: We show that $[E_{33},A_{33},B_{33}]$ satisfies Definition~\ref{def:QPFF-neu}\,(iii).
\\
Assume there exist $\lambda\in\C$, $x_3\in\C^{n_3}$, $u_3\in\C^{m_3}$ such that $(\lambda E_{33} - A_{33}) x_3 + B_{33} u_3 = 0$. Then we have, according to~\eqref{eq:QPFF-submatrices}, that
\[
   (\lambda E - A) O_T x_3 = U_S (\lambda E_{13}-A_{13}) x_3 + R_S (\lambda E_{23}-A_{23}) x_3 - O_S B_{33} u_3.
\]
Writing the complex variables in terms of their real and imaginary parts, i.e., $\lambda = \mu + \imath \nu$, $x_3 = \overline{x}_3 + \imath \widehat{x}_3$ and $u_3 = \overline{u}_3 + \imath \widehat{u}_3$, we can conclude that
\[\begin{aligned}
    (\mu E - A)O_T \overline{x}_3 - \nu E O_T \widehat{x}_3 + O_S B_{33} \overline{u}_3 &\in \im [U_S,R_S],\\
    (\mu E - A)O_T \widehat{x}_3 + \nu E O_T \overline{x}_3 + O_S B_{33} \widehat{u}_3 &\in \im [U_S,R_S].\\
 \end{aligned}
\]
Furthermore,
\[
    \im(O_S B_{33} + U_S B_{13}) = \im [U_S,R_S,O_S] \begin{smallbmatrix} B_{13}\\ 0 \\ B_{33} \end{smallbmatrix}  = \im B V_2 \subseteq \im B,
\]
and hence
\[\begin{aligned}
    (\mu E - A)O_T \overline{x}_3 - \nu E O_T \widehat{x}_3 &\in \im[U_S,R_S]+\im B = E\cV^*_{[E,A,B]} + \im B,\\
    (\mu E - A)O_T \widehat{x}_3 + \nu E O_T \overline{x}_3 &\in \im[U_S,R_S]+\im B = E\cV^*_{[E,A,B]} + \im B.\\
 \end{aligned}
\]
Assume now inductively that $O_T\overline{x}_3,O_T\widehat{x}_3\in\cV^k_{[E,A,B]}$ (which is trivially satisfied for $k=0$), then
\[\begin{aligned}
   A O_T \overline{x}_3 = \mu E O\overline{x}_3 - \nu E O \widehat{x}_3 + E\overline{v} + B \overline{u}, \\
   A O_T \widehat{x}_3 = \mu E O\widehat{x}_3 + \nu E O \overline{x}_3 + E\widehat{v} + B \widehat{u},
 \end{aligned}
\]
for some $\overline{v},\widehat{v}\in\cV^*_{[E,A,B]}\subseteq \cV^k_{[E,A,B]}$ and $\overline{u},\widehat{u}\in\R^{m}$. Consequently,
\[
   O_T \overline{x}_3 \in A^{-1}(E\cV^k_{[E,A,B]} + \im B) = \cV^{k+1}_{[E,A,B]}\quad \text{and}\quad O_T \widehat{x}_3 \in A^{-1}(E\cV^k_{[E,A,B]} + \im B) = \cV^{k+1}_{[E,A,B]}.
\]
Altogether, we can conclude that $O_T\overline{x}_3,O_T\widehat{x}_3\in \im O_T \cap \cV^*_{[E,A,B]} =\{0\}$, which in view of full column rank of~$O_T$ implies that $x_3 = 0$. Therefore, also $B_{33} u_3 = 0$ which, due to full column rank of $B_{33}$, implies that $u_3 = 0$. This completes the proof.
\end{Proof}

\begin{Remark}[Geometric interpretation of QPFF]
   From Theorem~\ref{thm:QPFF} it becomes clear that the subspace $\cV^*_{[E,A,B]}\cap\cW^*_{[E,A,B]}$ is the reachability/controllability space of~\eqref{eq:EAB} and $\cV^*_{[E,A,B]}$ is the (augmented) consistency space of~\eqref{eq:EAB} (i.e., the set of all initial values $x_0$ for which a (smooth) solution $(x,u)$ of~\eqref{eq:EAB} with $x(0)=x_0$ exists), cf.\ also~\cite[Sec.~6]{BergReis13a}. A matrix representation of the linear system~\eqref{eq:EAB} restricted to $\cV^*_{[E,A,B]}\cap\cW^*_{[E,A,B]}$ is given by $[E_{11},A_{11},B_{11}]$, and a representation~\eqref{eq:EAB} restricted to the quotient-space $\cV^*_{[E,A,B]}/(\cV^*_{[E,A,B]}\cap\cW^*_{[E,A,B]})$ (representing the uncontrollable but consistent states) is given by the uncontrollable ODE system $[E_{22},A_{22},0]$.
\end{Remark}

We stress that condition~\eqref{eq:cond-B-O2} in Theorem~\ref{thm:QPFF} cannot be omitted in general, as the following example shows.

\begin{Example}
Consider the system
\[
    [E,A,B] = \left[\begin{bmatrix}0\\ 1\end{bmatrix}, \begin{bmatrix}1\\ 0\end{bmatrix}, \begin{bmatrix}1\\ 0\end{bmatrix}\right]
\]
and calculate that $\cV^*_{[E,A,B]} = \R$ and $\cW^*_{[E,A,B]}=\{0\}$. Then we may choose $T=R_T =[1]$ and
\[
    S=[R_S,O_S]^{-1} = \begin{bmatrix} 0&1\\ 1&\alpha\end{bmatrix}^{-1} = \begin{bmatrix} -\alpha&1\\1&0\end{bmatrix},\quad \alpha\in\R.
\]
Furthermore, we may choose $V = [1]$ and $F_P = F_2 = [\alpha]$
so that~\eqref{eq:F1_F2_choice} is satisfied.
 Then
\[
    [SET, S(AT+BF_P), SBV] = \left[\begin{bmatrix}1\\ 0\end{bmatrix}, \begin{bmatrix}0\\ 0\end{bmatrix}, \begin{bmatrix}-\alpha\\ 1\end{bmatrix}\right],
\]
which is \textit{not}  in~QPFF~\eqref{eq:QPFF}, if~$\alpha\neq 0$.
 However, we obtain~$\alpha=0$ under the additional condition~\eqref{eq:cond-B-O2}, by which the new system is in~QPFF.
\end{Example}

Finally, we stress that the~QPFF~\eqref{eq:QPFF} is unique in the following sense.

\begin{Proposition}\label{Prop:QPFF-uniqueness}
Consider the system $[E,A,B]\in\Sigma_{\ell,n,m}$ and assume there are $S_1,S_2\in\GL_\ell(\R)$, $T_1,T_2\in\GL_n(\R)$, $V_1,V_2\in\GL_m(\R)$, $F_P^1,F_P^2\in\R^{m\times n}$ such that for $i=1,2$
\[
   [E,A,B] \overset{S_i,T_i,V_i,F^i_P}{\cong_{P}}
   \left[\begin{bmatrix} E^i_{11} & E^i_{12} & E^i_{13} \\ 0 & E^i_{22} & E^i_{23} \\ 0 & 0 & E^i_{33} \end{bmatrix}, \begin{bmatrix} A^i_{11} & A^i_{12} & A^i_{13} \\ 0 & A^i_{22} & A^i_{23} \\ 0 & 0 & A^i_{33} \end{bmatrix}, \begin{bmatrix} B^i_{11} & 0 & B^i_{13} \\ 0 & 0 & 0 \\ 0 & 0 & B^i_{33} \end{bmatrix}\right].
\]
Then the corresponding diagonal blocks (which have the same corresponding sizes as established already in Lemma~\ref{lem:Wong-decoupledQPFF}) are P-feedback equivalent, i.e.\
$[E^1_{kk},A^1_{kk},B^1_{kk}]\cong_P [E^2_{kk},A^2_{kk},B^2_{kk}]$ for $k=1,2,3$ (with $B^i_{22} := 0_{\ell_2\times m_2}$).
\end{Proposition}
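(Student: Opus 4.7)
The plan is to reduce to decoupled QPFFs, refine each diagonal QPFF block to a PFF, and then invoke the uniqueness of the PFF indices from Proposition~\ref{Prop:Indices-uniqueness-PFF}.

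First, I would apply Proposition~\ref{Prop:decoupledQPFF} to each $\Sigma_i$: the decoupled QPFF has identical diagonal blocks, so without loss of generality both given QPFFs are already decoupled. Transitivity of $\cong_P$ then gives a single P-feedback equivalence between the two decoupled systems. Next, for each $i\in\{1,2\}$ and $k\in\{1,2,3\}$, apply Theorem~\ref{Thm:brundae} to bring $[E^i_{kk},A^i_{kk},B^i_{kk}]$ into PFF. The key observation is that the defining conditions (i)--(iii) of Definition~\ref{def:QPFF-neu} force a disjoint partition of the six PFF block types across the three diagonal blocks:
\begin{itemize}
\item Block $k{=}1$: $\rk E_{11}^i=\ell_1^i$ excludes the $\gamma,\delta,\kappa$ types (they force a row-rank deficiency in $E$), and $\rk_\C[\lambda E^i_{11}-A^i_{11},B^i_{11}]=\ell_1^i$ for all $\lambda$ excludes the $\bar c$ type (regular pencil rank drop). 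Only $\alpha,\beta$ blocks remain.
\item Block $k{=}2$: $E^i_{22}$ square invertible with $B=0$ leaves only the $\bar c$ part.
\item Block $k{=}3$: $\rk_\C[\lambda E^i_{33}-A^i_{33},B^i_{33}]=n_3^i+m_3^i$ for all $\lambda$ forces full column rank of the augmented pencil everywhere, which rules out $\alpha,\beta$ (column redundancy) and $\bar c$. Only $\gamma,\delta,\kappa$ blocks remain.
\end{itemize}
Assembling the three block-PFFs block-diagonally (and permuting to the order $\alpha,\beta,\bar c,\gamma,\delta,\kappa$) yields a full PFF of $\Sigma_i$ whose multi-indices split cleanly across the three QPFF blocks.

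Finally, since $\Sigma_1\cong_P\Sigma_2$, Proposition~\ref{Prop:Indices-uniqueness-PFF} forces $\bsalpha^1=P_{\bsalpha}\bsalpha^2,\ldots,\bskappa^1=P_{\bskappa}\bskappa^2$ and $A^1_{\bar c}=H^{-1}A^2_{\bar c}H$. Because in each $\Sigma_i$ every $\bsalpha$ (resp.\ $\bsbeta$) index stems exclusively from block $k=1$, every $\bar c$-datum from $k=2$, and every $\bsgamma,\bsdelta,\bskappa$ index from $k=3$, the index permutations necessarily restrict to permutations between corresponding blocks, and $H$ operates on the (2,2)-blocks. Therefore, for each $k$, $[E^1_{kk},A^1_{kk},B^1_{kk}]$ and $[E^2_{kk},A^2_{kk},B^2_{kk}]$ are P-feedback equivalent to the same PFF, hence P-feedback equivalent to each other, which is the claim (with $B^i_{22}=0$ for $k=2$).

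The main obstacle is the case-by-case verification in the second step that each PFF block type is compatible with the rank hypotheses of exactly one of the three QPFF diagonal-block conditions. Once this clean partitioning is established, the rest is a straightforward application of the already proved uniqueness result for the full PFF.
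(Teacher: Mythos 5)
Your proof is correct, but it follows a genuinely different route from the paper's. You reduce to decoupled QPFFs via Proposition~\ref{Prop:decoupledQPFF}, refine each diagonal block into a PFF via Theorem~\ref{Thm:brundae}, check that the rank hypotheses (i)--(iii) of Definition~\ref{def:QPFF-neu} distribute the PFF block types disjointly over the three diagonal blocks (the relevant invariants $\rk E$, $\rk B$ and $\rk_\C[\lambda E-A,B]$ all being preserved under P-feedback), and then let Proposition~\ref{Prop:Indices-uniqueness-PFF}, applied to the two assembled full PFFs, finish the argument. The paper never passes through the PFF: it compares an arbitrary decoupled QPFF with the Wong-sequence QPFF of Theorem~\ref{thm:QPFF} and uses the invariance and explicit form of the augmented Wong limits (Lemmas~\ref{lem:Wong-Pfb} and~\ref{lem:Wong-decoupledQPFF}) to force the connecting transformations $\overline S,\overline T$ to be block upper triangular with invertible diagonal blocks and $\overline V,\overline F$ to have the required zero blocks, which yields the block equivalences $E^1_{kk}=M_k^S E^2_{kk}M_k^T$ etc.\ directly. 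Your route is conceptually cleaner and exhibits the QPFF as a coarsening of the PFF, but it rests on Theorem~\ref{Thm:brundae} (whose proof is external to the paper) and on the elementary yet case-heavy verification of which Kronecker-type blocks are compatible with each condition; the paper's route is self-contained modulo the Wong-sequence machinery and produces the explicit structured transformations as a by-product. One detail you should make explicit: in the block $k=1$ analysis, also invoke $\rk B_{11}=m_1$ (preserved since $B_1=SB_2V$) to exclude zero input columns from the first block's PFF, so that the three per-block forms really assemble, after a permutation, into a PFF of the whole system with the input column blocks in the prescribed order.
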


\begin{proof}
Consider any system $[E,A,B]$ and the following P-feedback equivalent systems
\[
  [E_{QPFF},A_{QPFF},B_{QPFF}]\overset{S,T,V,F}{\cong_P}[E,A,B]
 \qquad \text{and} \qquad
  [E^W_{QPFF},A^W_{QPFF},B^W_{QPFF}]
  \overset{S^W,T^W,V^W,F^W}{\cong_P}[E,A,B]
\]
where $[E_{QPFF},A_{QPFF},B_{QPFF}]$ is any decoupled~QPFF (not necessarily obtained via the Wong-sequence approach, but probably utilizing Proposition~\ref{Prop:decoupledQPFF}) which is P-feedback equivalent to~$[E,A,B]$,
 and the~QPFF $[E^W_{QPFF},A^W_{QPFF},B^W_{QPFF}]$ is obtained from~$[E,A,B]$ via the Wong-sequence approach (Theorem~\ref{thm:QPFF}). 
We will now show that the diagonal blocks of $[E_{QPFF},A_{QPFF},B_{QPFF}]$ are P-feedback equivalent to the corresponding diagonal blocks of $[E^{W}_{QPFF},A^{W}_{QPFF},B^{W}_{QPFF}]$, from which the claim of Proposition~\ref{Prop:QPFF-uniqueness} follows.

First observe that $[E_{QPFF},A_{QPFF},B_{QPFF}] \overset{\overline{S},\overline{T},\overline{V},\overline{F}}{\cong_P} [E^{W}_{QPFF},A^{W}_{QPFF},B^{W}_{QPFF}]$ with
\[
   \overline{S} = S (S^W)^{-1},\quad
   \overline{T} = (T^W)^{-1} T,\quad
   \overline{V} = (V^W)^{-1} V,\quad
   \overline{F} = (V^W)^{-1} \big(F - F^W \overline{T}\big).
\]
Denote by $\cV^*$ and $\cW^*$ the Wong sequences of the original system $[E,A,B]$.
Then, by construction (cf.\ Theorem~\ref{thm:QPDFF}),
\[\begin{aligned}
   \im U^{W}_T &= \cV^*\cap\cW^*,& \im[U^{W}_T,R^{W}_T]&=\cV^*, \\
   \im U^{W}_S &= 
   E(\cV^*\cap\cW^*), & \im[U^{W}_S,R^{W}_S] &= E\cV^*.
   \end{aligned}
\]
Lemma~\ref{lem:Wong-decoupledQPFF} in conjunction
with Lemma~\ref{lem:Wong-Pfb} yields that the decoupled~QPFF $[E_{QPFF},A_{QPFF},B_{QPFF}]$ satisfies
\[\begin{aligned}
     T^{-1}(\cV^*\cap\cW^*) &= \im \begin{smallbmatrix} I \\ 0 \\ 0 \end{smallbmatrix}, &\quad T^{-1}\cV^* &= \im \begin{smallbmatrix} I & 0\\ 0 & I \\ 0 & 0 \end{smallbmatrix},\\
     E_{QPFF}T^{-1} (\cV^*\cap\cW^*) & =  \im \begin{smallbmatrix} I \\ 0 \\ 0 \end{smallbmatrix},&  E_{QPFF}T^{-1}\cV^* &= \im \begin{smallbmatrix} I & 0\\ 0 & I \\ 0 & 0 \end{smallbmatrix}.
     \end{aligned}
\]
This gives, for some invertible $M_U^T$, $M_R^T$, $M_O^T$, $M_U^S$, $M_R^S$ and~$M_O^S$,
\[\begin{aligned}
    T = [U^{W}_T,R^{W}_T,O^{W}_T] \begin{smallbmatrix} M_U^T & * & * \\ 0 & M_R^T & * \\ 0 & 0 & M_O^T\end{smallbmatrix},\quad
    S^{-1} = [U^{W}_S,R^{W}_S,O^{W}_S] \begin{smallbmatrix} (M_U^S)^{-1} & * & * \\ 0 & (M_R^S)^{-1} & * \\ 0 & 0 & (M_O^S)^{-1}\end{smallbmatrix}.
  \end{aligned}
\]
Therefore,
\[
   \overline{S} = \begin{smallbmatrix} M_U^S & * & * \\ 0 & M_R^S & * \\ 0 & 0 & M_O^S\end{smallbmatrix},\quad \overline{T} = \begin{smallbmatrix} M_U^T & * & * \\ 0 & M_R^T & * \\ 0 & 0 & M_O^T\end{smallbmatrix}
\]
and it follows from $E_{QPFF} = \overline{S} E^{W}_{QPFF} \overline{T}$
 that
\[
    E_{11} = M_U^S E^{W}_{11} M_U^T,\quad
    E_{22} = M_R^S E^{W}_{22} M_R^T,\quad
    E_{33}= M_O^S E^{W}_{33} M_O^T,
\]
where $E_{ii}$, $E^{W}_{ii}$, $i=1,2,3$, are the corresponding diagonal blocks of the block diagonal matrices~$E_{QPFF}$ and~$E^{W}_{QPFF}$.
This shows the desired P-feedback equivalence for the entries of the $E$-matrix.

Writing $\overline{V} = \begin{smallbmatrix} \overline{V}_{11} & \overline{V}_{12} & \overline{V}_{13} \\ \overline{V}_{21} & \overline{V}_{22} & \overline{V}_{23} \\ \overline{V}_{31} & \overline{V}_{32} & \overline{V}_{33} \end{smallbmatrix}$
and  multiplying the equation $B_{QPFF} = \overline{S}B^{W}_{QPFF}\overline{V}$ from the left by~$[0,0,I]$ and from the right by $\begin{smallbmatrix} I & 0 \\ 0 & I \\ 0 & 0 \end{smallbmatrix}$ gives
 $M_O^S B^{W}_{33} [\overline{V}_{31},\overline{V}_{32}] = 0$.
 Invertibility of~$M_O^S$ and full column rank of~$B^{W}_{33}$
 yields~$\overline{V}_{31}=0$ and $\overline{V}_{32}=0$, i.e.
\[
   \overline{V} = \begin{smallbmatrix} \overline{V}_{11} & \overline{V}_{12} & \overline{V}_{13} \\ \overline{V}_{21} & \overline{V}_{22} & \overline{V}_{23} \\ 0 & 0 & \overline{V}_{33} \end{smallbmatrix}\quad\text{with $V_{33}$ invertible}.
\]

Furthermore, from $B_{QPFF} = \overline{S}B^{W}_{QPFF}\overline{V}$ we  see
that $B_{11} = M_U^S B^{W}_{11} \overline{V}_{11}$ and $B_{33} = M_O^S B^{W}_{33} \overline{V}_{33}$ and we need to show that also $\overline{V}_{11}$ is invertible. Assuming that $\overline{V}_{11}$ is not invertible, we find $u_1\in\R^{m_1}\setminus\{0\}$ with $\overline{V}_{11} u_1 = 0$, which implies that $0 = M_U^S B^{W}_{11} \overline{V}_{11} u_1 = B_{11} u_1$ contradicting full column rank of $B_{11}$.

Finally, writing $\overline{F} = \begin{smallbmatrix} \overline{F}_{U,1} & \overline{F}_{R,1} & \overline{F}_{O,1} \\ * & * & * \\ \overline{F}_{U,3} & \overline{F}_{R,3} & \overline{F}_{O,3} \end{smallbmatrix}$ we will show that $\overline{F}_{U,3}$ and $\overline{F}_{R,3}$ are both zero, because then
we have
\[
   \overline{S} B^{W}_{QPFF} \overline{F} = \begin{smallbmatrix} M_U^S & * & * \\ 0 & M_R^S & * \\ 0 & 0 & M_O^S\end{smallbmatrix} \begin{smallbmatrix} B^{W}_{11} \overline{F}_{U,1} &* & * \\
     0 & 0 & 0 \\
   B^{W}_{33} \overline{F}_{U,3} & B^{W}_{33} \overline{F}_{R,3} & B^{W}_{33} \overline{F}_{O,3}. \end{smallbmatrix} = \begin{smallbmatrix} M_U^S B^{W}_{11} \overline{F}_{U,1} &* & * \\
     0 & 0 & * \\
   0 & 0 & M_O^S B^{W}_{33} \overline{F}_{O,3}, \end{smallbmatrix},
\]
which then yields the desired form
\[
A_{QPFF} = \overline{S} (A^{W}_{QPFF} \overline{T} + B^{W}_{QPFF} \overline{F} ) = \begin{smallbmatrix} M_U^S (A^{W}_{11} M_U^T + B^{W}_{11} \overline{F}_{U,1}) & * & * \\ 0 & M_U^S A^{W}_{22} M_U^T & * \\ 0 & 0 & M_O^S(A^{W}_{33} M_O^T + B^{W}_{33} \overline{F}_{O,3}) \end{smallbmatrix}.
\]
From the block structures of $S^W(A T^W + B F^W)$, $\overline{S}$ and $\overline{T}$ it follows that
\[
   \begin{smallbmatrix} * & * & * \\ 0 & * & * \\ 0 & 0 & * \end{smallbmatrix} = \overline{S} S^W(A T^W + B F^W) \overline{T} = SAT + SB F^W \overline{T} = S(AT+BF) + SB(F^W\overline{T}-F),
\]
and due to the block structure of $S(AT+BF)$ we therefore have
\[\begin{aligned}
   \begin{smallbmatrix} * & * & * \\ 0 & * & * \\ 0 & 0 & * \end{smallbmatrix} &= SB(F-F^W\overline{T}) = SBV \overline{V}^{-1} \overline{F} = \begin{smallbmatrix} B_{11} & 0 & 0 \\ 0 & 0 & 0 \\ 0 & 0 & B_{33} \end{smallbmatrix} \begin{smallbmatrix} * & * & * \\ * & * & * \\ 0 & 0 & V_{33}^{-1} \end{smallbmatrix} \begin{smallbmatrix} \overline{F}_{U,1} & \overline{F}_{R,1} & \overline{F}_{O,1} \\ * & * & * \\ \overline{F}_{U,3} & \overline{F}_{R,3} & \overline{F}_{O,3} \end{smallbmatrix}\\
   &= \begin{smallbmatrix} * & * & * \\ 0 & 0 & 0 \\ B_{33} V_{33}^{-1} \overline{F}_{U,3} & B_{33} V_{33}^{-1} \overline{F}_{R,3} & B_{33} V_{33}^{-1} \overline{F}_{O,3} \end{smallbmatrix}.
  \end{aligned}
\]
This shows that $B_{33} V_{33}^{-1} \overline{F}_{U,3}=0$ and $B_{33} V_{33}^{-1} \overline{F}_{R,3}=0$, which in view of the full column rank of $B_{33}$ implies $\overline{F}_{U,3}=0$ and $\overline{F}_{R,3}=0$ as desired.
\end{proof}

\begin{Example}[Example~\ref{Ex:P-form} revisited]\label{Ex:QPFF}
   Consider the system $[E,A,B]$ from Example~\ref{Ex:P-form}. The augmented Wong sequences can be calculated as follows:
   \[\begin{aligned}
       \cV^1_{[E,A,B]} &= \im \begin{smallbmatrix}
          5 & 2 &  0 &  2  \\
          -2 & 0 & -1 & -2  \\
          1 & 0 &  0 &  0  \\
          0 & 1 &  0 &  0  \\
          0 & 0 &  1 &  0  \\
          0 & 0 &  0 &  1  \\
 \end{smallbmatrix} = \cV^*_{[E,A,B]},\\
     \cW^1_{[E,A,B]} &= \im \begin{smallbmatrix}
        3 & -1 & -2 & 0  \\
        -1 &  1 &  0 & 0  \\
        1 &  0 &  0 & 0  \\
        0 &  1 &  0 & 0  \\
        0 &  0 &  1 & 0  \\
        0 &  0 &  0 & 1  \\
     \end{smallbmatrix},\quad
      \cW^2_{[E,A,B]} = \im \begin{smallbmatrix}
        -1 & 2 & 0 & -2 & 0  \\
        1 & 0 & 0 &  0 & 0  \\
        0 & 1 & 0 &  0 & 0  \\
        0 & 0 & 1 &  0 & 0  \\
        0 & 0 & 0 &  1 & 0  \\
        0 & 0 & 0 &  0 & 1  \\
     \end{smallbmatrix} = \cW^*_{[E,A,B]}.
     \end{aligned}
   \]
   Based on these subspaces, we can now easily choose, in virtue of Theorem~\ref{thm:QPFF},
   \[\begin{aligned}
      T &= [U_T,R_T,O_T] = \left[\begin{smallarray}{ccc|c|cc}
          -8 & -5 & 2 & 3 & 1 & 0\\ 4 & 1 & -2 & -1 & 0 & 0\\ -2 & -1 & 0 & 1 & 0 & 0\\ 1 & 0 & 0 & 0 & -1 & -1\\ 0 & 1 & 0 & 1 & 1 & -1\\ 0 & 0 & 1 & -1 & 0 & -1
      \end{smallarray}\right],&\quad
    S^{-1} &= [U_S,R_S,O_S] = \left[\begin{smallarray}{cc|c|cccc}
         1 & 0 & -1 & -1 & -1 & 0 & -1\\ 0 & 1 & -1 & -1 & 1 & -1 & 0\\ 0 & -1 & -1 & 0 & 1 & -1 & 1\\ 1 & 1 & -4 & -2 & 0 & 0 & 0\\ 0 & 1 & -3 & -1 & 1 & 1 & 0\\ -1 & 0 & 1 & \frac{7}{2} & -\frac{7}{2} & -1 & 0\\ 1 & 1 & 0 & 3 & -9 & -1 & -1
      \end{smallarray}\right]\\
     F_P & = [ F_1, F_2 , 0]  = \left[\begin{smallarray}{ccc|c|cc}
    9 & 4 & -1 & -5 & 0 & 0\\ 0 & 0 & 0 & 0 & 0 & 0\\ 6 & 4 & -3 & 1 & 0 & 0  \end{smallarray}\right],&\quad
     V &= [V_1,V_2,V_3] =\left[\begin{smallarray}{c||cc}
        2 & -1 & 0\\ 1 & 1 & 0\\ 0 & -1 & 1
     \end{smallarray}\right],
    \end{aligned}
\]
which results in the QPFF
\[
  [SET, S(AT+BF_P),SBV] = \left[\left[\begin{smallarray}{ccc|c|cc} 3 & 3 & -1 & -5 & -\frac{29}{2} & 33\\ 1 & 0 & -2 & 1 & \frac{5}{2} & 0\\\hline 0 & 0 & 0 & -1 & -\frac{3}{2} & 1\\\hline 0 & 0 & 0 & 0 & -5 & 15\\ 0 & 0 & 0 & 0 & -3 & 9\\ 0 & 0 & 0 & 0 & 0 & 0\\ 0 & 0 & 0 & 0 & 0 & 0 \end{smallarray}\right], \left[\begin{smallarray}{ccc|c|cc} 6 & 5 & 1 & -5 & 18 & -\frac{679}{6}\\ 4 & 3 & -3 & -1 & 6 & -\frac{47}{2}\\\hline 0 & 0 & 0 & -1 & -1 & \frac{5}{3}\\\hline 0 & 0 & 0 & 0 & 15 & -\frac{427}{6}\\ 0 & 0 & 0 & 0 & 7 & -\frac{239}{6}\\ 0 & 0 & 0 & 0 & 2 & -\frac{23}{6}\\ 0 & 0 & 0 & 0 & 1 & -\frac{5}{6}
 \end{smallarray}\right], \left[\begin{smallarray}{c||cc} 1 & 13 & -9\\ 0 & 0 & 0\\\hline 0 & 0 & 0\\\hline 0 & 8 & -5\\ 0 & 6 & -3\\ 0 & 0 & 0\\ 0 & 0 & 0  \end{smallarray}\right]\right].
\]
 In particular, we see that the original system can be decomposed into a completely controllable system in $\Sigma_{2,3,1}$, an uncontrollable ODE system in $\Sigma_{1,1,0}$ and a fully constrained system in $\Sigma_{4,2,2}$.
\end{Example}

\begin{Remark}[Numerical considerations]\label{rem:numerics}
   The calculation of the QPFF relies on obtaining (bases of) kernels and images of matrices, which is inherently numerically unstable because arbitrary small disturbances in the entries of any non-full rank matrix result generically in a full rank matrix. For moderately sized matrices this problem can be circumvented by carrying out the calculations with exact arithmetics (e.g.\ by working with type \texttt{sym} in Matlab as we actually did in Example~\ref{Ex:QPFF}). For large scale sparse systems it is possible to exploit the structural zeros in the calculation of the augmented Wong-sequences and the sparsity can be preserved in the choice of corresponding basis matrices; however, it is a topic of future research to explore whether our approach may be utilized for the analysis of realistic large scale control systems or whether some adjustments or specialized calculation methods are necessary.
\end{Remark}

\section{PD-feedback forms}\label{Ssec:PDF-form}

In this section, we investigate PD-feedback which allows for a simpler ``(quasi) canonical'' form compared to the (quasi) P-feedback form since the set of allowed transformations is larger. However, unlike the latter, the quasi PD-feedback form can be derived directly from the Kalman controllability decomposition presented in~\cite{BergTren14} by decomposing the first block row.

\subsection{PD-feedback equivalence}

The following concept of PD-feedback equivalence enlarges the transformation class associated with P-feedback equivalence as in Definition~\ref{Def:PF-equiv}.

\begin{Definition}[PD-feedback equivalence]\label{Def32:PD-equi}
Two systems
$[E_1, A_1, B_1], [E_2, A_2, B_2]  \in \Sigma_{\ell,n,m}$
are called
\emph{PD-feedback equivalent}, if
\begin{equation}\begin{aligned}
&\exists\, S\in \Gl_\ell(\R), T\in \Gl_n(\R), V\in \Gl_m(\R), F_P,F_D\in \R^{m\times n}: \\
&\begin{bmatrix} sE_1-A_1, & B_1\end{bmatrix}
=
S
\begin{bmatrix} sE_2-A_2, & B_2 \end{bmatrix}
\begin{bmatrix} T & 0 \\  sF_D-F_P & V \end{bmatrix}\,;
\end{aligned}\label{eq:PD-feedbequiv}\end{equation}
we write
\begin{align*}
     [E_1,A_1,B_1] \ {\cong_{PD}} \  [E_2, A_2, B_2] \qquad
    \text{or, if necessary,}\qquad  [E_1,A_1,B_1] \ \overset{S,T,V,F_P,F_D}{\cong_{PD}} \  [E_2, A_2, B_2]\,.
\end{align*}
\end{Definition}

\begin{Remark}
Similarly as for P-feedback equivalence, it can easily be shown that PD-feedback equivalence is reflexive, symmetric and transitive, so it is indeed an equivalence relation.
\end{Remark}

For later use we show how the augmented Wong sequences change under PD-feedback.

\begin{Lemma}[Augmented Wong sequences under PD-feedback]\label{lem:Wong-PDfb}
If the systems  $[E_1, A_1, B_1], [E_2, A_2, B_2] \in \Sigma_{\ell,n,m}$
are PD-feedback equivalent $[E_1, A_1, B_1] \ \overset{S,T,V,F_P,F_D}{\cong_{PD}} \  [E_2, A_2, B_2]$, then
\begin{equation*}\label{eq:rel-wong}
\begin{aligned}
    \forall\, i\in\N_0:\quad \cV_{[E_1,A_1,B_1]}^i = T^{-1} \cV_{[E_2,A_2,B_2]}^i \quad \text{and}\quad
      \cW_{[E_1,A_1,B_1]}^i = T^{-1} \cW_{[E_2,A_2,B_2]}^i.
    \end{aligned}
\end{equation*}
\end{Lemma}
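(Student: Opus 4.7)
The plan is to follow the same induction template used in the proof of Lemma~\ref{lem:Wong-Pfb}, adapted to absorb the new derivative-feedback term into the input. I would first unpack the matrix identity~\eqref{eq:PD-feedbequiv} block-by-block. Comparing coefficients of $s$ and of $s^0$ in $[sE_1-A_1, B_1] = S[sE_2-A_2,B_2]\begin{smallbmatrix} T & 0 \\ sF_D-F_P & V \end{smallbmatrix}$ yields the three identities
\[
   E_1 = S(E_2 T + B_2 F_D), \qquad A_1 = S(A_2 T + B_2 F_P), \qquad B_1 = S B_2 V,
\]
with $S,T,V$ invertible.

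Next, I would prove $\cV^i_{[E_1,A_1,B_1]} = T^{-1}\cV^i_{[E_2,A_2,B_2]}$ by induction on $i$. The base case $i=0$ is immediate since both sides equal $\R^n$. For the step, assume the equality at level $i$ and write
\begin{align*}
   \cV^{i+1}_{[E_1,A_1,B_1]}
   &= \setdef{x\in\R^n}{\exists\, y\in\cV^i_{[E_1,A_1,B_1]},\, u\in\R^m :\, A_1 x = E_1 y + B_1 u}\\
   &= \setdef{x\in\R^n}{\exists\, z\in\cV^i_{[E_2,A_2,B_2]},\, u\in\R^m :\, S(A_2 T x + B_2 F_P x) = S(E_2 z + B_2 F_D T^{-1}z) + S B_2 V u},
\end{align*}
where I have used the inductive hypothesis to substitute $y = T^{-1}z$ with $z\in\cV^i_{[E_2,A_2,B_2]}$. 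Cancelling the invertible~$S$ and rearranging gives $A_2(Tx) = E_2 z + B_2\bigl(V u + F_D T^{-1}z - F_P x\bigr)$. Since $V$ is invertible, as $u$ ranges over $\R^m$ the combined vector $v := V u + F_D T^{-1}z - F_P x$ also ranges over all of $\R^m$. Hence the set above coincides with $\setdef{x}{Tx\in A_2^{-1}(E_2\cV^i_{[E_2,A_2,B_2]} + \im B_2)} = T^{-1}\cV^{i+1}_{[E_2,A_2,B_2]}$, completing the induction.

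The argument for $\cW^i_{[E_1,A_1,B_1]} = T^{-1}\cW^i_{[E_2,A_2,B_2]}$ is entirely analogous: starting from the defining relation $E_1 x = A_1 y + B_1 u$ and substituting the PD-feedback identities, one collects the term $B_2 F_D x$ coming from~$E_1$ together with $B_2 F_P T^{-1}z$ and $B_2 V u$ into a single unconstrained vector $v\in\R^m$, again using invertibility of $V$. The only real point to check is that the derivative-feedback contribution $B_2 F_D x$, which did not appear in the P-feedback proof, lives in $\im B_2$ and can therefore be absorbed into the free input; this is the sole, but entirely routine, new element compared with Lemma~\ref{lem:Wong-Pfb}. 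No further obstacle arises, so the proof can be kept to a few lines in the spirit of the earlier one, with the second statement simply remarked as analogous.
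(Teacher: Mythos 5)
Your proposal is correct and follows essentially the same route as the paper: an induction on $i$ that unpacks the identities $E_1=S(E_2T+B_2F_D)$, $A_1=S(A_2T+B_2F_P)$, $B_1=SB_2V$ and absorbs all feedback terms (including the new derivative term $B_2F_Dx$, resp.\ $B_2F_D T^{-1}z$) into the free input via the invertibility of $V$, with the $\cW$-sequence treated analogously. No gap; your writeup just spells out the substitution $y=T^{-1}z$ and the surjectivity of $u\mapsto Vu+(\text{affine shift})$ a bit more explicitly than the paper does.
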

\begin{proof} We prove the statement by induction. It is clear that $\cV_{[E_1,A_1,B_1]}^0 = T^{-1} \cV_{[E_2,A_2,B_2]}^0$.
Assume that $\cV_{[E_1,A_1,B_1]}^i = T^{-1} \cV_{[E_2,A_2,B_2]}^i$ for some $i\geq 0$.
Then~\eqref{eq:PD-feedbequiv} yields
\begin{align*}
     \cV_{[E_1,A_1,B_1]}^{i+1}   &= A_1^{-1} (E_1 \cV_{[E_1,A_1,B_1]}^i + \im  B_1)\\
    &= \setdef{x\in\R^n}{ \begin{array}{l} \exists\, y\in\cV_{[E_1,A_1,B_1]}^i\ \exists\, u\in\R^m:\\[1.5mm] (SA_2 T + SB_2F_P) x = (SE_2T + SB_2F_D) y + SB_2V u\end{array}}\\
    &= \setdef{x\in\R^n}{ \exists\, z\in\cV_{[E_2,A_2,B_2]}^i\ \exists\, v\in\R^m:\ A_2 T x = E_2 z + B_2 v}\\
    &= T^{-1} \left( A_2^{-1} (E_2 \cV_{[E_2,A_2,B_2]}^i + \im  B_2)\right) = T^{-1} \cV_{[E_2,A_2,B_2]}^{i+1}.
\end{align*}
The proof of the statements  about~$\cW_{[E_1,A_1,B_1]}^i$ and~$\cW_{[E_2,A_2,B_2]}^i$ is similar and omitted.
\end{proof}

\subsection{PD-feedback form (PDFF)}

\begin{Definition}[PD-feedback form]\label{def:PDFF}
The system $[E,A,B]\in\Sigma_{\ell,n,m}$
 is said to be in
\textit{PD-feedback form} (PDFF), if
\begin{equation}
 \label{eq:PDform}
[E, A,B]
\ = \
\left[
  \begin{bmatrix} K_{\bsalpha} & 0 & 0 & 0\\ 0 & I_{n_{\overline{c}}} & 0
  & 0 \\ 0 & 0 & N_\bsbeta & 0  \\ 0 & 0 & 0 & K_{\bsgamma}^\top \\ 0&0&0&0
 \end{bmatrix},
  \begin{bmatrix}L_{\bsalpha} & 0 & 0 & 0\\ 0 & A_{\overline{c}} & 0 & 0
   \\ 0 & 0 & I_{|\bsbeta|} & 0 \\ 0 & 0 & 0 & L_{\bsgamma}^\top \\ 0&0&0&0 \end{bmatrix},
  \begin{bmatrix} 0&0  \\ 0&0 \\ 0&0\\ 0&0\\ 0 & I_{r} \end{bmatrix}
\right],
\end{equation}
where $r=\rk  B$,
$\bsalpha\in\N^{n_\bsalpha},\bsbeta\in\N^{n_\bsbeta},\bsgamma\in\N^{n_\bsgamma}$
denote multi-indices, and  $A_{\overline{c}}\in\R^{n_{\overline{c}}\times n_{\overline{c}}}$.
\end{Definition}

Two PD-feedback equivalent systems have the same~PDFF up to permutation of the entries of $\bsalpha, \bsbeta, \bsgamma$ and similarity of $A_{\overline{c}}$.

\begin{Proposition}[Uniqueness of indices for~PDFF]\label{Prop:Indices-uniqueness-PDFF}
Let $[E_i,A_i,B_i]\in\Sigma_{\ell,n,m}$, $i=1,2$,
be in~PDFF~\eqref{eq:PDform} with corresponding $r_i=\rk  B_i$, multi-indices
$\bsalpha_i\in\N^{n_{\bsalpha_i}},\bsbeta_i\in\N^{n_{\bsbeta_i}},\bsgamma_i\in\N^{n_{\bsgamma_i}}$, and  $A_{\overline{c},i}\in\R^{n_{\overline{c},i}\times n_{\overline{c},i}}$.
If $[E_1, A_1, B_1] \cong_{PD} [E_2, A_2, B_2]$, then
\[
    \bsalpha_1 =  P_\bsalpha \bsalpha_2,\quad \bsbeta_1 = P_\bsbeta  \bsbeta_2,\quad \bsgamma_1 =  P_\bsgamma \bsgamma_2,
\]
and
\[
    r_1 = r_2,\quad n_{\overline{c},1} = n_{\overline{c},2}, \quad A_{\overline{c},1} = H^{-1} A_{\overline{c},2} H,
\]
for permutation matrices $P_\bsalpha, P_\bsbeta, P_\bsgamma$    of appropriate sizes
and~$H\in\Gl_{n_{\overline{c},1}}(\R)$.
\end{Proposition}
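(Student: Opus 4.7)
The plan is to proceed exactly as in the proof of Proposition~\ref{Prop:Indices-uniqueness-PFF}, but with Lemma~\ref{lem:Wong-PDfb} replacing Lemma~\ref{lem:Wong-Pfb}. By Lemma~\ref{lem:Wong-PDfb}, under PD-feedback equivalence $[E_1,A_1,B_1] \overset{S,T,V,F_P,F_D}{\cong_{PD}} [E_2,A_2,B_2]$, the augmented Wong sequences transform only by the invertible $T^{-1}$, so their dimension sequences $(\dim \cV^k_{[E,A,B]})_k$ and $(\dim \cW^k_{[E,A,B]})_k$ are invariants of the PD-feedback equivalence class. Via Proposition~\ref{Prop:AWS_properties}(iii), these dimensions agree with the Wong-sequence dimensions of the augmented pencil $s[E,0]-[A,B]$.

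Next I would compute these dimensions explicitly for a system in PDFF~\eqref{eq:PDform}. The augmented pencil decomposes, after a suitable column permutation, into standard Kronecker blocks: underdetermined blocks indexed by $\bsalpha$, a finite regular block $sI_{n_{\overline{c}}}-A_{\overline{c}}$, infinite regular blocks indexed by $\bsbeta$, overdetermined blocks indexed by $\bsgamma$, and $r$ trivial blocks contributed by the input part of $B$. The classical uniqueness results~\cite[Thm.~2.2~\&~Prop.~2.3]{BergReis15a} — the same citation that drives the proof of Proposition~\ref{Prop:Indices-uniqueness-PFF} — then imply that the partitions $\bsalpha$, $\bsbeta$, $\bsgamma$ are determined up to permutation by these dimension sequences.

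For the remaining two invariants, $r=\rk B$ is immediate from $B_1=SB_2V$ with $S\in\Gl_\ell(\R)$ and $V\in\Gl_m(\R)$. The similarity of $A_{\overline{c},1}$ and $A_{\overline{c},2}$ follows because PD-feedback equivalence induces strict equivalence on the augmented pencil $s[E,0]-[A,B]$, under which the finite elementary divisors of the regular part are a complete invariant; since the finite regular part is carried entirely by the block $sI_{n_{\overline{c}}}-A_{\overline{c}}$, identical elementary divisors force $n_{\overline{c},1}=n_{\overline{c},2}$ and the existence of some $H\in\Gl_{n_{\overline{c},1}}(\R)$ with $A_{\overline{c},1}=H^{-1}A_{\overline{c},2}H$.

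The main obstacle I anticipate is the clean separation of the contributions from the four index groups and from the $A_{\overline{c}}$-block to the Wong-sequence dimensions — i.e., verifying that derivative feedback, which can modify $E$ via $E_1 = SE_2T + SB_2F_D$, does not mix the finite-frequency eigenstructure with the other Kronecker invariants. This separation is precisely what~\cite[Thm.~2.2~\&~Prop.~2.3]{BergReis15a} provides; beyond this bookkeeping the argument reduces, just as for the P-feedback case, to a direct citation of the uniqueness of the quasi-Kronecker form combined with Lemma~\ref{lem:Wong-PDfb}.
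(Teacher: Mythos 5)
Your overall strategy (invariance of the augmented Wong sequences under PD-feedback via Lemma~\ref{lem:Wong-PDfb}, then index formulas) is a viable alternative route in principle, but as written it has two genuine problems. First, the claim that PD-feedback equivalence induces \emph{strict} equivalence on the augmented pencil $s[E,0]-[A,B]$ is false: the right transformation factor in~\eqref{eq:PD-feedbequiv} contains the block $sF_D-F_P$ and is therefore $s$-dependent (unimodular, but not constant). Consequently the Kronecker structure of the augmented pencil is \emph{not} a PD-invariant. For instance, $[1,0,1]$ (i.e.\ $\dot x=u$) is PD-feedback equivalent to $[0,0,1]$ (i.e.\ $0=v$), and the augmented pencils $[s,-1]$ and $[0,-1]$ have different Kronecker forms (one column minimal index $1$ versus one column minimal index $0$ plus one infinite elementary divisor). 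Only the finite elementary divisors survive unimodular equivalence, so your conclusion about $A_{\overline{c}}$ can be salvaged, but any appeal to the augmented pencil's minimal indices or infinite structure --- which is exactly what you need for $\bsalpha,\bsbeta,\bsgamma$ --- breaks down. Second, \cite[Thm.~2.2~\&~Prop.~2.3]{BergReis15a} expresses the indices of the \emph{P-feedback form}~\eqref{eq:Pform} in terms of the Wong sequences; the PDFF indices are different objects (cf.\ Remark~\ref{rem:PFF->PDFF}: the PDFF $\bsalpha$ absorbs both the PFF $\bsalpha$ and the PFF $\bsbeta$, the PDFF $\bsbeta$ absorbs the PFF $\bsgamma$ and the shifted $\bskappa$), so the citation does not apply verbatim. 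You would have to derive the analogous formulas for the PDFF and verify that the invariant subspace data actually separates $\bsalpha$ from $\bsbeta$ and $\bsbeta$ from $\bsgamma$; note that the plain sequences $(\dim\cV^k)_k$ and $(\dim\cW^k)_k$ mix the $\bsalpha$- and $\bsbeta$-contributions to $\cW^k$ and the $\bsbeta$- and $\bsgamma$-contributions to $\cV^k$, so refined data such as $\dim(\cV^*\cap\cW^k)$ would be needed.

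The paper avoids all of this by a short direct argument: from $B_1=SB_2V$ with $B_i=\begin{smallbmatrix}0&0\\0&I_r\end{smallbmatrix}$ one gets $r_1=r_2$ and, partitioning $S$, forces $S_{12}=0$ and $S_{22}\in\Gl_r(\R)$, hence $S_{11}\in\Gl_{\ell-r}(\R)$; since $\im SB_2$ lies in the last $r$ rows, the feedback term $SB_2(sF_D-F_P)$ perturbs only those rows, and deleting them leaves a genuine \emph{strict} equivalence $S_{11}(\,\cdot\,)T$ between the two block-diagonal pencils built from $K_{\bsalpha}$, $A_{\overline{c}}$, $N_{\bsbeta}$, $K_{\bsgamma}^\top$, to which the uniqueness of the (quasi-)Kronecker form \cite[Rem.~2.8]{BergTren13} applies and yields all assertions at once. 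If you want to keep the Wong-sequence route, the cleanest repair is to mimic this reduction, or to establish PDFF-specific index formulas from Lemma~\ref{lem:Wong-PDfb} before invoking any uniqueness result.
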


\begin{Proof} It follows from~\eqref{eq:PD-feedbequiv} that $B_1 = SB_2V$ and hence $r_1 = r_2$. Furthermore, $sE_1-A_1=S(sE_2-A_2)T+SB_2(sF_D-F_P)$, which gives
\[
    \begin{smallbmatrix} sK_{\bsalpha_1}-L_{\bsalpha_1} & 0 & 0 & 0\\* 0 & sI_{n_{\overline{c},1}}- A_{\overline{c},1}  & 0
  & 0 \\ 0 & 0 & sN_{\bsbeta_1}-I_{|\bsbeta_1|} & 0  \\ 0 & 0 & 0 & sK_{\bsgamma_1}^\top-L_{\bsgamma_1}^\top \\ 0&0&0&0
 \end{smallbmatrix}
    =  S \begin{smallbmatrix} sK_{\bsalpha_2}-L_{\bsalpha_2} & 0 & 0 & 0\\* 0 & sI_{n_{\overline{c},2}}- A_{\overline{c},2}  & 0
  & 0 \\ 0 & 0 & sN_{\bsbeta_2}-I_{|\bsbeta_2|} & 0  \\ 0 & 0 & 0 & sK_{\bsgamma_2}^\top-L_{\bsgamma_2}^\top \\ sF_1-G_1 & sF_2-G_2 & sF_3-G_3 & sF_4-G_4
 \end{smallbmatrix} T
\]
for some matrices $F_i$ and~$G_i$ of appropriate sizes. Set $r:=r_1=r_2$ and write
\[
    S = \begin{bmatrix} S_{11} & S_{12}\\ S_{21} & S_{22}\end{bmatrix},\quad S_{22}\in\R^{r\times r},
\]
and $S_{11}, S_{12}, S_{21}$ of appropriate size. Then $B_1 = SB_2V$
yields
\[
    \begin{bmatrix} 0 & 0\\ 0 & I_r\end{bmatrix} = \begin{bmatrix} S_{11} & S_{12}\\ S_{21} & S_{22}\end{bmatrix} \begin{bmatrix} 0 & 0\\ 0 & I_r\end{bmatrix} V = \begin{bmatrix} 0 & S_{12}\\ 0 & S_{22}\end{bmatrix} V,
\]
and hence $S_{22}\in\Gl_r(\R)$ and $S_{12}=0$. Then again $S_{11}\in\Gl_{\ell-r}(\R)$ and we have
\[
    \begin{smallbmatrix} sK_{\bsalpha_1}-L_{\bsalpha_1} & 0 & 0 & 0\\* 0 & sI_{n_{\overline{c},1}}- A_{\overline{c},1} & 0
  & 0 \\ 0 & 0 & sN_{\bsbeta_1}-I_{|\bsbeta_1|} & 0 \\ 0 & 0 & 0 & sK_{\bsgamma_1}^\top-L_{\bsgamma_1}^\top \end{smallbmatrix}\\
    =  S_{11} \begin{smallbmatrix} sK_{\bsalpha_2}-L_{\bsalpha_2} & 0 & 0 & 0\\* 0 & sI_{n_{\overline{c},2}}- A_{\overline{c},2}  & 0
  & 0 \\ 0 & 0 & sN_{\bsbeta_2}-I_{|\bsbeta_2|} & 0  \\ 0 & 0 & 0 & sK_{\bsgamma_2}^\top-L_{\bsgamma_2}^\top
 \end{smallbmatrix} T,
\]
which in view of~\cite[Rem.~2.8]{BergTren13} implies the assertion.
\end{Proof}

We are now in the position to show that any $[E,A,B]\in\Sigma_{\ell,n,m}$ is PD-feedback equivalent to a system in~PDFF.
This result has already been observed in the seminal work~\cite{LoisOzca91} co-authored by Nicos Karcanias; it simply consists of a left transformation~$S$ together with an input space transformation~$V$ which puts the matrix~$B$ into the
form $SBV=\left[\begin{smallmatrix} 0&0\\ 0&I_r\end{smallmatrix}\right]$ and an additional transformation which puts the pencil $sNE-NA$,
where $N=[I_{l-r},0]W$, into Kronecker canonical form.

\begin{Theorem}[PDFF~\cite{LoisOzca91}]\label{Thm32:PDform}
For any system $[E,A,B]\in\Sigma_{\ell,n,m}$
  there exist $S\in \Gl_\ell(\R), T\in \Gl_n(\R), V\in \Gl_m(\R), F_P, F_D\in \R^{m\times n}$
such that
\[
    [SET+SBF_D,SAT+SBF_P,SBV]
\ \
\text{is in~PDFF~\eqref{eq:PDform}.}
\]
\end{Theorem}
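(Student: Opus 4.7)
The plan is to exploit the enlarged transformation class of PD-equivalence to decouple the normalisation of $B$ from the reduction of an $(\ell - r) \times n$ pencil to Kronecker canonical form, where $r := \rk B$. The crucial observation is that the $r$ rows of $E$ and $A$ lying ``opposite'' the identity block in $SBV$ can be prescribed arbitrarily by the feedback matrices $F_D$ and $F_P$, so only the remaining $\ell - r$ rows carry genuine pencil content that has to be reduced.

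As a first step, I would choose $V = [V_1, V_2] \in \Gl_m(\R)$ with $\im V_1 = \ker B$, so that $BV_2 \in \R^{\ell \times r}$ has full column rank, and then set $S_0 := [P_\perp, BV_2]^{-1}$ for any $P_\perp \in \R^{\ell \times (\ell - r)}$ whose image complements $\im B$; this yields $S_0 B V = \left[\begin{smallmatrix} 0 & 0 \\ 0 & I_r \end{smallmatrix}\right]$. Partition $S_0 E = \left[\begin{smallmatrix} E' \\ E'' \end{smallmatrix}\right]$ and $S_0 A = \left[\begin{smallmatrix} A' \\ A'' \end{smallmatrix}\right]$ with $E', A' \in \R^{(\ell - r) \times n}$. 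By the classical rectangular Kronecker canonical form theorem there exist $S_1 \in \Gl_{\ell - r}(\R)$, $T \in \Gl_n(\R)$, multi-indices $\bsalpha, \bsbeta, \bsgamma$ and a matrix $A_{\overline{c}} \in \R^{n_{\overline{c}} \times n_{\overline{c}}}$ such that
\[
S_1 (s E' - A') T = \diag\bigl(s K_\bsalpha - L_\bsalpha,\ s I_{n_{\overline{c}}} - A_{\overline{c}},\ s N_\bsbeta - I_{|\bsbeta|},\ s K_\bsgamma^\top - L_\bsgamma^\top\bigr).
\]
Defining $S := \left[\begin{smallmatrix} S_1 & 0 \\ 0 & I_r \end{smallmatrix}\right] S_0$, the block-triangular left factor does not disturb the bottom $r$ rows, so $SBV$ is preserved, and the top $\ell - r$ rows of $SET$ and $SAT$ already coincide with the pencil block in~\eqref{eq:PDform}.

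It remains to kill the bottom $r$ rows of $SET$ and $SAT$ using $F_D$ and $F_P$. From $SBV = \left[\begin{smallmatrix} 0 & 0 \\ 0 & I_r \end{smallmatrix}\right]$ one reads off $SB = \left[\begin{smallmatrix} 0 \\ W \end{smallmatrix}\right]$ with $W := [0, I_r] V^{-1} \in \R^{r \times m}$ of full row rank, so $SBF_D$ and $SBF_P$ vanish in their top $\ell - r$ rows and their bottom $r$ rows can be prescribed arbitrarily. Writing $SET = \left[\begin{smallmatrix} M_E \\ R_E \end{smallmatrix}\right]$ and $SAT = \left[\begin{smallmatrix} M_A \\ R_A \end{smallmatrix}\right]$ with $R_E, R_A \in \R^{r \times n}$, and using $WV = [0, I_r]$, the explicit choices $F_D := V \left[\begin{smallmatrix} 0 \\ -R_E \end{smallmatrix}\right]$ and $F_P := V \left[\begin{smallmatrix} 0 \\ -R_A \end{smallmatrix}\right]$ make the bottom blocks vanish, producing~\eqref{eq:PDform}. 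The only substantive ingredient is the Kronecker canonical form itself; everything else is bookkeeping to ensure that the block-triangular $S$ preserves the normalisation of $B$ and that the ``aligned'' rows are exactly the ones PD-feedback can annihilate. The degenerate cases $r = 0$ (where the theorem collapses to the plain KCF of $sE - A$) and $r = \ell$ (where no pencil reduction is needed) fit into this scheme without modification.
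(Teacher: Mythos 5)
Your proof is correct and follows essentially the same route the paper indicates: the paper omits a detailed proof, citing~\cite{LoisOzca91} and sketching exactly your strategy (normalise $B$ to $\left[\begin{smallmatrix}0&0\\0&I_r\end{smallmatrix}\right]$ via $S$ and $V$, reduce the top $\ell-r$ rows of the pencil to Kronecker canonical form, and use $F_P,F_D$ to annihilate the remaining $r$ rows). Your write-up merely fills in the bookkeeping — the block-triangular structure of $S$ and the explicit feedback matrices — that the paper leaves implicit.
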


\begin{Remark}[PDFF from PFF]\label{rem:PFF->PDFF}
We may directly derive the~PDFF~\eqref{eq:PDform} from the PFF~\eqref{eq:Pform}. To this end,
observe that the system $[I_{\beta_i},N_{\beta_i}^\top,e_{\beta_i}^{[\beta_i]}]$ can be written as
\[
    \ddt L_{\beta_i}  x_{c[i]}(t) = K_{\beta_i} x_{c[i]}(t),\quad \ddt x_{c[i],\beta_i}(t) = u_{c[i]}(t),
\]
and hence it is, after a permutation of the states and the equations, PD-feedback equivalent to the system
\[\left[ \begin{bmatrix} K_{\beta_i}\\ 0\end{bmatrix}, \begin{bmatrix} L_{\beta_i}\\ 0\end{bmatrix}, \begin{bmatrix} 0\\ 1\end{bmatrix}\right].\]
On the other hand, the system $[K_{\kappa_i}^\top,L_{\kappa_i}^\top,e_{\kappa_i}^{[\kappa_i]}]$ can be written as
\[
    \ddt N_{\kappa_i-1} x_{ob[i]}(t) = x_{ob[i]}(t),\quad \ddt x_{ob[i],\kappa_{i-1}}(t) = u_{ob[i]}(t),
\]
and hence it is PD-feedback equivalent to the system
\[\left[ \begin{bmatrix} N_{\kappa_i-1} \\ 0\end{bmatrix}, \begin{bmatrix} I_{\kappa_i-1}\\ 0\end{bmatrix}, \begin{bmatrix} 0\\ 1\end{bmatrix}\right].\]
It is now easy to see that we obtain a system in the form~\eqref{eq:PDform} after some block permutations.
\end{Remark}

\begin{Example}[PDFF]\label{Ex:PD-form}
We revisit Example~\ref{Ex:P-form} to illustrate Theorem~\ref{Thm32:PDform} and consider again the system $[E,A,B]\in\Sigma_{7,6,3}$. Utilizing Remark~\ref{rem:PFF->PDFF} we can easily chose
\[
   \begin{aligned}
        S &= \begin{smallbmatrix} -15 & 2 & 4 & 5 & -6 & -6 & 4\\ -3 & -1 & 0 & 3 & -2 & 0 & 0\\ 8 & 2 & 0 & -5 & 4 & 2 & -1\\ -1 & 0 & 0 & 1 & -1 & 0 & 0\\ -16 & -1 & 2 & 9 & -8 & -5 & 3\\ -6 & 0 & 1 & 3 & -3 & -2 & 1\\ -4 & 0 & 1 & 2 & -2 & -1 & 1
\end{smallbmatrix},&\quad
       T &= \begin{smallbmatrix}
         -17 & 10 & -13 & -3 & -8 & 6\\ 13 & -6 & 9 & 2 & 6 & -4\\ -7 & 4 & -5 & -1 & -3 & 2\\ 6 & -3 & 4 & 1 & 3 & -2\\ -5 & 2 & -3 & 0 & -2 & 1\\ 3 & -2 & 2 & 0 & 1 & -1
       \end{smallbmatrix},\\
       F_P &= \begin{smallbmatrix}
         3 & 3 & -2 & -2 & 0 & 2\\ -14 & 10 & -12 & -3 & -7 & 6\\ 7 & -4 & 6 & 3 & 4 & -3
       \end{smallbmatrix},&\quad
       F_D &= \begin{smallbmatrix}
          0 & 0 & -2 & 0 & 0 & 0\\ 0 & 0 & -1 & 0 & 0 & 0\\ 0 & 0 & 0 & 0 & 0 & 1
       \end{smallbmatrix},&\quad
       V &= \begin{smallbmatrix}
           2 & 0 & -1\\ 1 & 0 & -1\\ 0 & -1 & -1
       \end{smallbmatrix},
   \end{aligned}
\]
which results in
\[\begin{aligned}
    S(ET+BF_D) &= \diag(K_1,K_2,I_1,N_1,N_1,0_{3\times 0}),\\ S(AT+BF_P) &= \diag(L_1,L_2,A_{\overline{c}},I_1,I_1,0_{3\times 0}),\\
    SBV &= \diag(0_{4\times 0},I_3),
    \end{aligned}
\]
with $A_{\overline{c}}=[1]$. In particular, $[E,A,B]$ is PD-feedback equivalent to a PDFF \eqref{eq:PDform} with $\bsalpha = (1,2)$, $n_{\overline{c}}=1$, $\bsbeta=(1,1)$, $n_\bsgamma=0$, $r = 3$.
\end{Example}

\subsection{Quasi PD-feedback form (QPDFF)}

We will now weaken the PD-feedback form to a \textit{quasi} PD-feedback form, again exploiting the augmented Wong sequences as the crucial tool to achieve the necessary geometric insight.

\begin{Definition}\label{def:QPDFF}
  A system $[E,A,B]\in\Sigma_{\ell,n,m}$ is said to be in \emph{quasi PD-feedback form (QPDFF)}, if
  \begin{equation}\label{eq:QPDFF}
     [E,A,B] = \left[\begin{bmatrix} E_{11} & E_{12} & E_{13} \\ 0 & E_{22} & E_{23} \\ 0 & 0 & E_{33}\\ 0&0&0 \end{bmatrix}, \begin{bmatrix} A_{11} & A_{12} & A_{13} \\ 0 & A_{22} & A_{23} \\ 0 & 0 & A_{33}\\ 0&0&0  \end{bmatrix}, \begin{bmatrix} 0&0 \\ 0 & 0 \\ 0&0\\ 0 & \hat B \end{bmatrix}\right],
  \end{equation}
  where
  \begin{enumerate}[(i)]
     \item\label{item:QPDFF(i)} $E_{11}, A_{11}\in\R^{\ell_1\times n_{1}}$ with $\ell_1 < n_1$ and $\rk E_{11} = \rk_\C [\lambda E_{11} - A_{11}] = \ell_1$ for all $\lambda\in\C$,
     \item\label{item:QPDFF(ii)} $E_{22},A_{22}\in\R^{\ell_{2}\times n_{2}}$ with $\ell_2 = n_2$ and $E_{22}\in\Gl_{n_2}(\R)$,
     \item\label{item:QPDFF(iii)} $E_{33}, A_{33} \in\R^{\ell_{3}\times n_{3}}$ satisfy $\rk_\C (\lambda E_{33}-A_{33}) = n_{3}$ for all $\lambda\in\C$,
     \item $\hat B \in\Gl_{m_2}(\R)$ for $m_2=\rk B$
  \end{enumerate}
  and the remaining matrices have suitable sizes. Furthermore, we call a~QPDFF \emph{decoupled},  if all off-diagonal blocks are zero.
\end{Definition}

The control theoretic interpretation of the~QPDFF is that the system can be decomposed in four parts. The first three parts contain only state variables, they form a homogeneous~DAE. The first part consists of an underdetermined~DAE which is completely controllable\footnote{In this context, it may be better to speak of ``complete reachability'' instead of ``complete controllability'', because it seems rather unintuitive to call a system which is not affected by an input ``controllable''; nevertheless, this naming convention also occurs in the context of behavioral controllability, where the trivial behavior consisting of all trajectories is also called ``controllable''.}, the second part is actually an uncontrollable~ODE and the third part is a~DAE which has only the trivial solution (and is therefore trivially behaviorally controllable). The fourth part contains  only
the input~$u=(u_1,u_2)$, where the first component of the input is completely free (but does not influence the state) and the second input component is maximally constrained~$(0=u_2)$.

\begin{Remark}
   Utilizing Sylvester equations in a very similar way as in Proposition~\ref{Prop:decoupledQPFF} it can be shown that any~QPDFF is PD-feedback equivalent to a decoupled~QPDFF with identical diagonal blocks. Since the input and state are completely decoupled in the~QPDFF this decoupling is actually much easier to achieve than the decoupling in the~QPFF.
\end{Remark}

For later use we derive some properties of the augmented Wong limits for a system $[E,A,B]$ which is in decoupled QPDFF~\eqref{eq:QPDFF}.

\begin{Lemma}\label{Lem:Wong-QPDFF}
Assume $[E,A,B]\in\Sigma_{\ell,n,m}$ is in decoupled~QPDFF~\eqref{eq:QPDFF}, then
\begin{equation}\label{eq:QPDFF-Wong}
   \cV^*_{[E,A,B]}\cap\cW^*_{[E,A,B]} = \R^{n_1}\times\{0\}^{n_2+n_3},\quad \cV^*_{[E,A,B]} = \R^{n_1+n_2} \times \{0\}^{n_3}
\end{equation}
and
\[
\begin{array}{rcl}
\im B &=&              \{0\}^{\ell_1 + \ell_2+\ell_3} \times \R^{m_2},\\
   E(\cV^*_{[E,A,B]} \cap \cW^*_{[E,A,B]}) + \im B
   &=& \R^{\ell_1}\times \{0\}^{\ell_2+\ell_3} \times \R^{m_2},\\
   E\cV^*_{[E,A,B]} + \im B &= &\R^{\ell_1+\ell_2}\times\{0\}^{\ell_3} \times \R^{m_2}.
\end{array}
\]
In particular (in view of Lemma~\ref{lem:Wong-PDfb}), two~QPDFFs which are PD-feedback equivalent have the same block sizes in $E$, $A$ and $B$.
\end{Lemma}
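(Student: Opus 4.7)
My strategy is to exploit the fact that the system is in \emph{decoupled}~QPDFF, which reduces the augmented Wong recursions $\cV^{i+1}=A^{-1}(E\cV^i+\im B)$ and $\cW^{i+1}=E^{-1}(A\cW^i+\im B)$ to a Cartesian product of three independent recursions over the diagonal blocks $[E_{kk},A_{kk},0]$, $k=1,2,3$. The key observation is that $\hat B$ is square and invertible, so $\im B=\{0\}^{\ell_1+\ell_2+\ell_3}\times\R^{m_2}$, which immediately gives the third displayed identity; moreover, because the last block row of both $E$ and $A$ is zero, the constraint on the $m_2$ bottom coordinates contributed by $\im B$ is automatically satisfied in the recursion, so the input plays no role beyond this trivial bookkeeping and the factorisation across the three blocks goes through by a straightforward induction.

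Next I would compute the three component Wong limits from the hypotheses of Definition~\ref{def:QPDFF}. For the first block, full row rank of $E_{11}$ gives $\cV^1_{[E_{11},A_{11},0]}=A_{11}^{-1}(\R^{\ell_1})=\R^{n_1}$, so the $\cV$-sequence stabilises at step one; and the hypothesis $\rk_\C(\lambda E_{11}-A_{11})=\ell_1<n_1$ means the pencil $sE_{11}-A_{11}$ has only underdetermined Kronecker blocks, which yields $\cW^*_{[E_{11},A_{11},0]}=\R^{n_1}$ as well. Invertibility of $E_{22}$ gives $\cV^*_{[E_{22},A_{22},0]}=\R^{n_2}$ and $\cW^*_{[E_{22},A_{22},0]}=\{0\}$. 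The full column-rank hypothesis on $\lambda E_{33}-A_{33}$ forces its (quasi-)Kronecker form to consist only of overdetermined and nilpotent blocks, whence $\cV^*_{[E_{33},A_{33},0]}=\{0\}^{n_3}$. Taking Cartesian products and intersecting then yields the two subspace identities~\eqref{eq:QPDFF-Wong}.

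The two sum formulas are then immediate: applying the block-diagonal $E$ to the Cartesian products and using $\im E_{11}=\R^{\ell_1}$ together with $E_{22}\in\GL_{n_2}(\R)$ gives $E(\cV^*\cap\cW^*)=\R^{\ell_1}\times\{0\}^{\ell_2+\ell_3}\times\{0\}^{m_2}$ and $E\cV^*=\R^{\ell_1+\ell_2}\times\{0\}^{\ell_3}\times\{0\}^{m_2}$, and adding the already-identified $\im B$ completes both equalities. Finally, the invariance of block sizes under PD-feedback follows by combining Lemma~\ref{lem:Wong-PDfb} (the dimensions $n_1=\dim(\cV^*\cap\cW^*)$ and $n_1+n_2=\dim\cV^*$ are invariant, hence so is $n_3=n-n_1-n_2$) with invariance of $\rk B=m_2$ under~\eqref{eq:PD-feedbequiv}, and with the two sum formulas from which $\ell_1$ and $\ell_2=\dim(E\cV^*+\im B)-\ell_1-m_2$ can be read off as invariants; $\ell_3$ is then determined by $\ell-\ell_1-\ell_2-m_2$. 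The main obstacle I anticipate is the rigorous identification $\cV^*_{[E_{33},A_{33},0]}=\{0\}$ from the polynomial-rank hypothesis; this can be discharged either by a direct computation on overdetermined and nilpotent Kronecker canonical blocks, or by appealing to the quasi-Kronecker analysis of~\cite{BergTren12,BergTren13}.
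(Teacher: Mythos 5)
Your proposal is correct and follows essentially the same route as the paper: the paper's (omitted) proof likewise rests on the observation that the zero bottom block rows of $E$ and $A$ together with $\hat B\in\Gl_{m_2}(\R)$ make the augmented Wong sequences coincide with those of $[E,A,0]$, so that everything factorises over the diagonal blocks, whose limits are then computed exactly as you describe (full row rank of $E_{11}$ and the underdetermined structure of $sE_{11}-A_{11}$, invertibility of $E_{22}$, and the full-column-rank/quasi-Kronecker argument forcing $\cV^*_{[E_{33},A_{33},0]}=\{0\}$). Your derivation of the image formulas and of the block-size invariance via Lemma~\ref{lem:Wong-PDfb} and the invariance of $\rk B$ also matches the intended argument.
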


The proof  utilizes the observation that
\[
    \cV^*_{[E,A,B]} = \cV^*_{[E,A,0]} \quad \text{and}\quad \cW^*_{[E,A,B]} = \cW^*_{[E,A,0]}
\]
and  is very similar to the proof of Lemma~\ref{lem:Wong-decoupledQPFF}.
It is therefore omitted.


We will now show that any system $[E,A,B]$ is PD-feedback equivalent to a system in~QPDFF and that the transformation matrices can be obtained from the augmented Wong sequences; this provides some geometric insight in the decoupling.

\begin{Theorem}[Quasi PD-feedback form]\label{thm:QPDFF}
 Consider $[E,A,B]\in\Sigma_{\ell,n,m}$ with corresponding augmented Wong limits $\cV^*_{[E,A,B]}$ and $\cW^*_{[E,A,B]}$. Choose full column rank matrices $U_T\in\R^{n\times n_{1}}$, $R_T\in\R^{n\times n_{2}}$, $O_T\in\R^{n\times n_{3}}$, $Q_S\in\R^{\ell\times m_2}$, $U_S\in\R^{\ell\times \ell_{1}}$, $R_S\in\R^{\ell\times \ell_{2}}$, $O_S\in\R^{\ell\times \ell_{3}}$ such that
 \[\begin{aligned}
    \im U_T &= \cV^*_{[E,A,B]}\cap\cW^*_{[E,A,B]},\\
    \im R_T \oplus \im U_T &= \cV^*_{[E,A,B]}, \\
    \im O_T\oplus \im R_T \oplus \im U_T &= \R^n, \\[2ex]
    \im Q_S &= \im B,\\
    \im U_S \oplus \im Q_S &= E(\cV^*_{[E,A,B]} \cap \cW^*_{[E,A,B]})+\im B,\\
    \im R_S \oplus \im U_S \oplus \im Q_S &= E\cV^*_{[E,A,B]} + \im B,\\
    \im O_S \oplus \im R_S \oplus \im U_S \oplus \im Q_S &= \R^{\ell}.
    \end{aligned}
 \]
 Let $T:=[U_T,R_T,O_T]$, $S:=[U_S,R_S,O_S,Q_S]^{-1}$ and choose (not necessarily full rank) matrices $F_P, F_D\in\R^{m\times n}$ such that
 \[
    0 = [0, I_{m_2}] S (E T + B F_D),\quad
    0 = [0, I_{m_2}] S (A T + B F_P),
 \]
 and choose $V=[V_1,V_2]$ with full column rank matrices $V_{1}\in\R^{m\times m_1}$, $V_2\in\R^{m\times m_2}$ such that
 \[
     \im V_{1} = \ker B,\quad \im V_{1} \oplus \im V_2 = \R^m.
 \]
 Then $[S(ET+BF_D),S(AT+BF_P),SBV]$ is in~QPDFF~\eqref{eq:QPDFF}.
\end{Theorem}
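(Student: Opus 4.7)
My plan is to mirror the proof of Theorem~\ref{thm:QPFF}, exploiting that in the QPDFF the input is fully decoupled from the state equations (the first three block rows of $\widetilde{B} := SBV$ are zero), which makes the argument slightly easier than in the P-feedback case. Throughout, set $\widetilde{E} := S(ET+BF_D)$, $\widetilde{A} := S(AT+BF_P)$, so that by Lemma~\ref{lem:Wong-PDfb} the augmented Wong limits transform covariantly under $T$.

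First I would establish the block triangular form of $\widetilde{E}$ and $\widetilde{A}$. Combining the invariance relations of Proposition~\ref{Prop:AWS_properties} with the identity $A(\cV^*\cap\cW^*)+\im B = E(\cV^*\cap\cW^*)+\im B$ from~\eqref{eq:EVBcapAWB}, I obtain
\[
   EU_T,\ AU_T \subseteq E(\cV^*\cap\cW^*)+\im B = \im [U_S,Q_S],
\]
\[
   ER_T,\ AR_T \subseteq E\cV^*+\im B = \im [U_S,R_S,Q_S],
\]
and trivially $EO_T, AO_T \subseteq \R^\ell = \im [U_S,R_S,O_S,Q_S]$. Hence there exist matrices $E_{ij}, A_{ij}$ and spurious coefficient matrices $P_i^E, P_i^A$ such that $ET$ and $AT$ equal the desired block upper triangular expansion in the columns of $[U_S,R_S,O_S]$ plus a trailing $Q_S$-component. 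Since $\im B = \im Q_S$ and $Q_S$ has full column rank $m_2=\rk B$, the equations $[0,I_{m_2}]S(ET+BF_D)=0$ and $[0,I_{m_2}]S(AT+BF_P)=0$ are solvable for $F_D, F_P$, and any such solutions absorb the $Q_S$-components exactly, producing the block form~\eqref{eq:QPDFF} with vanishing fourth block row.

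The form of $\widetilde{B}$ and the invertibility of $\hat B$ are then immediate: $BV_1 = 0$ kills the first block column; $BV_2$ has image $\im B = \im Q_S$ and full column rank $m_2$ (since $\im V_2$ complements $\ker B$), so after $S$-multiplication it lands entirely in the fourth block row as an invertible $\hat B\in\Gl_{m_2}(\R)$. For $E_{22}\in\Gl_{n_2}(\R)$ I would adapt Step~3 of the proof of Theorem~\ref{thm:QPFF}: if $v\in\R^{n_2}$ satisfies $ER_T v\in E(\cV^*\cap\cW^*)+\im B$, pick $w\in\cV^*\cap\cW^*$ and $b\in\im B$ with $ER_T v = Ew + b$; then $R_T v - w \in E^{-1}(\im B) = \cW^1_{[E,A,B]} \subseteq \cW^*$, whence $R_T v \in \cW^* \cap \im R_T \subseteq \im U_T\cap\im R_T = \{0\}$ by the direct-sum choice of $R_T$, forcing $v=0$. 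Combining this injectivity with the surjectivity implied by $\cV^* = \im U_T + \im R_T$ gives $n_2 = \ell_2$ and full rank of $E_{22}$.

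The hard part is verifying the pencil conditions on the $[E_{11},A_{11}]$ and $[E_{33},A_{33}]$ blocks; I would handle these by directly adapting Steps~2b--2d and Step~4 of the proof of Theorem~\ref{thm:QPFF}. For $[E_{11},A_{11}]$, I would show inductively that $\cV^*_{[E_{11},A_{11},0]} \cap \cW^*_{[E_{11},A_{11},0]} = \R^{n_1}$, mimicking Step~2b but with the simplification that the $B_{11}$ and $B_{13}$ forcing terms present there now vanish; the quasi-Kronecker classification of~\cite{BergTren12} then forces the pencil $sE_{11}-A_{11}$ to consist only of underdetermined blocks, yielding $\ell_1<n_1$ and $\rk_\C[\lambda E_{11}-A_{11}]=\ell_1$ for every $\lambda\in\C$. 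For $[E_{33},A_{33}]$, I would assume $(\lambda E_{33}-A_{33})x_3=0$ for some $\lambda\in\C$ and $x_3\in\C^{n_3}$, split into real and imaginary parts $\overline{x}_3,\widehat{x}_3$, and prove inductively (again following Step~4, but now without the $B_{33}u_3$ contribution) that $O_T\overline{x}_3, O_T\widehat{x}_3 \in \cV^k_{[E,A,B]}$ for all $k$, which forces them into $\im O_T\cap\cV^* = \{0\}$ and hence $x_3=0$ by full column rank of $O_T$.
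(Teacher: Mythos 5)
Your overall strategy is exactly the paper's: establish the block structure from the invariance relations and the choice of $F_P,F_D$, read off the form of $SBV$ and the invertibility of $\hat B$, handle $E_{22}$ by the injectivity/dimension argument, and transfer the Wong-limit computations to the diagonal pencils $sE_{11}-A_{11}$ and $sE_{33}-A_{33}$. The block-structure argument, the $\hat B$ argument, the $E_{22}$ argument and the Step-4-style induction for $E_{33},A_{33}$ are all sound and match (or, in the case of Steps 3, supply more detail than) the paper's proof.

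There is, however, one genuine gap in your treatment of the $(1,1)$-block. You infer from $\cV^*_{[E_{11},A_{11},0]}\cap\cW^*_{[E_{11},A_{11},0]}=\R^{n_1}$ that the pencil $sE_{11}-A_{11}$ ``consists only of underdetermined blocks'', and from this you read off both $\ell_1<n_1$ and $\rk_\C(\lambda E_{11}-A_{11})=\ell_1$. That inference fails: the condition $\cV^*\cap\cW^*=\R^{n_1}$ does not exclude overdetermined Kronecker blocks with zero columns (i.e.\ pure zero rows), since these contribute no state variables. The $1\times 1$ zero pencil is a counterexample: its Wong limits satisfy $\cV^*\cap\cW^*=\R^1$, yet $\rk E=0\neq 1=\ell_1$ and its quasi-Kronecker form contains an overdetermined $1\times 0$ block. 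Definition~\ref{def:QPDFF}\,\ref{item:QPDFF(i)} also explicitly demands $\rk E_{11}=\ell_1$, which your argument never establishes. The paper closes exactly this hole with a separate step: from $\im EU_T+\im B=E(\cV^*_{[E,A,B]}\cap\cW^*_{[E,A,B]})+\im B=\im[U_S,Q_S]$ and $EU_T=U_SE_{11}+Q_SF_1^E$, $B=Q_S\bar B$, one gets that $\begin{smallbmatrix} E_{11}&0\\ F_1^E&\bar B\end{smallbmatrix}$ has full row rank, and the zero block in the top-right corner then forces $\rk E_{11}=\ell_1$. Only with this full row rank in hand does the quasi-Kronecker classification yield that the pencil is purely underdetermined, giving the remaining properties in Definition~\ref{def:QPDFF}\,\ref{item:QPDFF(i)}. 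You need to add this step; the rest of your proposal goes through.
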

\begin{Proof} \emph{Step 1}:  We show that the block structure of the~QPDFF~\eqref{eq:QPDFF} is achieved. The subspace inclusions~\eqref{eq:AWS_invariance} and the equalities~\eqref{eq:EVBcapAWB} imply that
\[
  \begin{aligned}
     \im E U_T &\subseteq \im [U_S,Q_S],&\quad \im A U_T &\subseteq \im [U_S,Q_S],\\
     \im E R_T &\subseteq \im [U_S,R_S,Q_S],&\quad \im A R_T &\subseteq \im  [U_S,R_S,Q_S],\\
     \im E O_T &\subseteq \im [U_S,R_S,O_S,Q_S]=\R^n,&\quad \im A O_T &\subseteq \im [U_S,R_S,O_S,Q_S]=\R^\ell,
  \end{aligned}
\]
hence there exists matrices $E_{11}, E_{12}, E_{13}, E_{22}, E_{23}, E_{33}, A_{11}, A_{12}, A_{13}, A_{22}, A_{23}, A_{33}$, $F_{1}^E$, $F_{2}^E$, $F_3^E$, $F_{1}^A$, $F_{2}^A$, $F_3^A$ such that
\[\begin{aligned}
   E U_T &= U_S E_{11} + Q_S F_1^E, &\quad A U_T &= U_S A_{11}  + Q_S F_1^A,\\
   E R_T &= U_S E_{12} + R_S E_{22} +  Q_S F_2^E,& A R_T &= U_S A_{12} + R_S A_{22}  + Q_S F_2^A,\\
   E O_T &= U_S E_{13} + R_S E_{23} + O_S E_{33}  + Q_S F_3^E, &    A O_T &= U_S A_{13} + R_S A_{23} + O_S A_{33} + Q_S F_3^A.
   \end{aligned}
\]
Furthermore, $B = Q_S \bar B$ for some $\bar B \in\R^{m_2\times m}$ and $\hat B := \bar B V_2 \in\R^{m_2\times m_2}$ since $\rk V_2 = m - \dim \ker B = \rk B = m_2$. Then $0 = \hat B x = \bar B V_2 x$
yields
 $V_2 x \in \ker \bar B \cap \im V_2 = \ker B \cap \im V_2 = \{0\}$,
 thus $\hat B \in\Gl_{m_2}(\R)$. Therefore,
\[
    SBV = S [0, BV_2] = S [U_S,R_S,O_S,Q_S] \begin{bmatrix} 0&0\\ 0&0\\ 0&0\\ 0&\bar B V_2\end{bmatrix} = \begin{bmatrix} 0&0\\ 0&0\\ 0&0\\ 0&\hat B\end{bmatrix}
\]
has the block structure as in~\eqref{eq:QPDFF}. Set $F^E := [F_1^E, F_2^E, F_3^E]$ and $F^A :=[F_1^A, F_2^A, F_3^A]$, then
$\im [0, I_{m_2}] S B = \im [0, \hat B] = \R^{m_2}$ and hence the equations
\begin{align*}
  F^E  + [0, I_{m_2}] S B F_D = 0,\quad F^A  + [0, I_{m_2}] S B F_P = 0
\end{align*}
have solutions $F_D, F_P\in\R^{m\times n}$ which satisfy
\[
    [0, I_{m_2}] S (E T + B F_D) = F^E + [0, \hat B] F_D = 0,\quad [0, I_{m_2}] S (A T + B F_P) = F^A + [0, \hat B] F_P = 0.
\]
This proves that $S (E T + B F_D)$ and $S (A T + B F_P)$ have the block structure as in~\eqref{eq:QPDFF}.
\\

\noindent
\emph{Step 2}: We show that $E_{11}, A_{11}$ satisfy Definition~\ref{def:QPDFF}\,\ref{item:QPDFF(i)}.
\\
 Denote by $\cV^i_{[E_{11},A_{11},0]}$, $\cW^i_{[E_{11},A_{11},0]}$, $\cV^*_{[E_{11},A_{11},0]}$, $\cW^*_{[E_{11},A_{11},0]}$ the Wong sequences and Wong limits corresponding to the matrix pencil ${sE_{11}-A_{11}}$. By choice of~$U_T$ we have
\[
   \cV^*_{[E,A,B]}\cap \cW^*_{[E,A,B]} = \im U_T = T (\R^{n_1}\times \{0\}^{n_2+n_3}).
\]
It follows from Lemma~\ref{lem:Wong-PDfb} that for $[\tilde E, \tilde A, \tilde B] := [S (E T + B F_D),  S (A T + B F_P), SBV]$ we have
\[
    \cV^*_{[E,A,B]} = T\cV^*_{[\tilde E, \tilde A, \tilde B]},\quad \cW^*_{[E,A,B]} = T\cW^*_{[\tilde E,\tilde A, \tilde B]},
\]
hence
\[
    \cV^*_{[\tilde E, \tilde A, \tilde B]} \cap \cW^*_{[\tilde E,\tilde A, \tilde B]} = \R^{n_1}\times \{0\}^{n_2+n_3}.
\]
Now let $x\in \cV^*_{[\tilde E, \tilde A, \tilde B]} \cap \cW^*_{[\tilde E, \tilde A, \tilde B]}$.
Then $x= (x_1^\top, 0, 0)^\top$ for some~$x_1\in\R^{n_1}$ and
\begin{align*}
    \tilde A  x \in \tilde A\big( \cV^*_{[\tilde E, \tilde A, \tilde B]} \cap \cW^*_{[\tilde E, \tilde A, \tilde B]}\big) + \im \tilde B
    \overset{\eqref{eq:EVBcapAWB}}{=} \tilde E\big( \cV^*_{[\tilde E, \tilde A, \tilde B]} \cap \cW^*_{[\tilde E, \tilde A, \tilde B]}\big) + \im \tilde B,
\end{align*}
and thus there exist $y = (y_1^\top, 0, 0)^\top\in \cV^*_{[\tilde E, \tilde A, \tilde B]} \cap \cW^*_{[\tilde E, \tilde A, \tilde B]}$ and $u_2\in\R^{m_2}$ such that $A_{11} x_1 = E_{11} y_1$ and $0 = \hat B u_2$, thus $u_2 =0$. This implies $x_1\in A_{11}^{-1}(E_{11} \{y_1\}) \subseteq \cV^1_{[E_{11},A_{11},0]}$.
A similar reasoning  yields
$y_1\in\cV^1_{[E_{11},A_{11},0]}$,
and therefore
\[
    x_1\in A_{11}^{-1}(E_{11} \{y_1\}) \subseteq  A_{11}^{-1}(E_{11} \cV^1_{[E_{11},A_{11},0]}) \subseteq \cV^2_{[E_{11},A_{11},0]}.
\]
Again, we conclude similarly that
 $y_1\in \cV^2_{[E_{11},A_{11},0]}$ and thus $x_1\in\cV^3_{[E_{11},A_{11},0]}$. Proceeding in this way we obtain $x_1\in\cV^*_{[E_{11},A_{11},0]}$.

Now let $j^*,r^*\in\N$ be such that $\cW^*_{[\tilde E, \tilde A, \tilde B]} = \cW^{j^*}_{[\tilde E, \tilde A, \tilde B]}$ and
$\cW^*_{[E_{11},A_{11},0]} = \cW^{r^*}_{[E_{11},A_{11},0]}$ and set
$q^* := \max\{j^*,r^*\}$.
Since $x \in \cW^*_{[\tilde E, \tilde A, \tilde B]} = \cW^{q^*}_{[\tilde E, \tilde A, \tilde B]}$, it follows from Definition~\ref{Def:AWS} that there exist $y_k\in\cW^{k}_{[\tilde E, \tilde A, \tilde B]}$ and $u_k\in\R^m$, $k=0,\ldots,q^*-1$,
 such that $\tilde Ex = \tilde A y_{q^*-1} + \tilde B u_{q^*-1}$
 and $\tilde Ey_k = \tilde Ay_{k-1} + \tilde B u_{k-1}$ for all $k=0,\ldots,q^*-1$.
 We find that
\begin{align*}
    \tilde A y_{q^*-1} = \tilde E x - \tilde B u_{q^*-1} \in \tilde A \big( \cV^*_{[\tilde E, \tilde A, \tilde B]} \cap \cW^*_{[\tilde E, \tilde A, \tilde B]}\big) + \im \tilde B
    \overset{\eqref{eq:EVBcapAWB}}{=} \tilde A\big( \cV^*_{[\tilde E, \tilde A, \tilde B]} \cap \cW^*_{[\tilde E, \tilde A, \tilde B]}\big) + \im \tilde B,
\end{align*}
thus
\begin{align*}
    y_{q^*-1} \in \tilde A^{-1}\Big(\tilde A\big( \cV^*_{[\tilde E, \tilde A, \tilde B]} \cap \cW^*_{[\tilde E, \tilde A, \tilde B]}\big) + \im \tilde B\Big)
    = \cV^*_{[\tilde E, \tilde A, \tilde B]} \cap \cW^*_{[\tilde E, \tilde A, \tilde B]} + \tilde A^{-1}(\im \tilde B) \subseteq \cV^*_{[\tilde E, \tilde A, \tilde B]}.
\end{align*}
Therefore,
\[
  y_{q^*-1} \in \cV^*_{[\tilde E, \tilde A, \tilde B]} \cap \cW^{q^*-1}_{[\tilde E, \tilde A, \tilde B]}.
\]
Analogously, we may show that
\[
  y_{k} \in \cV^*_{[\tilde E, \tilde A, \tilde B]}\cap \cW^{k}_{[\tilde E, \tilde A, \tilde B]},\quad k=0,\ldots,q^*-2.
\]
This implies that $y_k = (y_{k,1}^\top, 0, 0)^\top$ for some $y_{k,1}\in\R^{n_1}$, $k=0,\ldots,q^*-1$, and hence, in particular,
\[
    E_{11} y_{1,1} = 0,\ E_{11} y_{2,1} = A_{11} y_{1,1},\ \ldots,\ E_{11} y_{q^*-1,1} = A_{11} y_{q^*-2,1},\ E_{11} x_1 = A_{11} y_{q^*-1,1}.
\]
Therefore, we obtain $x_1\in\cW^*_{[E_{11},A_{11},0]}$. This proves
\begin{align*}
    \big(\cV^*_{[E_{11},A_{11},0]}\cap \cW^*_{[E_{11},A_{11},0]}\big) \times \{0\}^{n_2+n_3} &=  \cV^*_{[\tilde E, \tilde A, \tilde B]} \cap \cW^*_{[\tilde E, \tilde A, \tilde B]}
    = \R^{n_1}\times \{0\}^{n_2+n_3}.
\end{align*}

To conclude Step~2, it remains to show that $\rk E_{11} = \ell_1$, then the assertion follows from~\cite[Thm.~2.3]{BergTren12}. To this end, observe that $U_S E_{11} + Q_S F_1^E = E U_T$ and $Q_S \bar B = B$ imply that
\[
    \im [U_S, Q_S] \begin{bmatrix} E_{11} & 0\\ F_1^E & \bar B\end{bmatrix} = \im E U_T + \im B = E(\cV^*_{[E,A,B]} \cap \cW^*_{[E,A,B]})+\im B = \im [U_S, Q_S].
\]
As a consequence, full column rank of $[U_S, Q_S]$ gives that $\begin{smallbmatrix} E_{11} & 0\\ F_1^E & \bar B\end{smallbmatrix}$ has full row rank, by which $\rk E_{11} = \ell_1$.
\\

\noindent
\emph{Step 3}: The proof of $\ell_2=n_2$, invertibility of~$E_{22}$, and the property $\rk_\C \lambda E_{33} - A_{33} = n_3$ for all $\lambda\in\C$ is similar to the proof of~\cite[Thm.~3.3]{BergTren14} and omitted.
\end{Proof}

We revisit again Example~\ref{Ex:P-form} to illustrate how Theorem~\ref{thm:QPDFF} can be utilized to obtain a QPDFF.

\begin{Example}[Example~\ref{Ex:P-form} revisited]
  Consider again $[E,A,B]\in\Sigma_{7,6,3}$ from Example~\ref{Ex:P-form}. The augmented Wong-sequences have been calculated in Example~\ref{Ex:QPFF} and we can choose $T$ and $S$ as
  \[ T = [U_T,R_T,O_T] = \left[\begin{smallarray}{ccc|c|cc} -8 & -5 & 2 & 7 & -1 & 1\\ 4 & 1 & -2 & -5 & -1 & 1\\ -2 & -1 & 0 & 1 & 1 & -1\\ 1 & 0 & 0 & 0 & 0 & -1\\ 0 & 1 & 0 & 1 & 0 & 0\\ 0 & 0 & 1 & 1 & 1 & 0
  \end{smallarray}\right],\quad
     S^{-1} = [U_S,R_S,O_S,Q_S] = \left[\begin{smallarray}{c|c|cc|ccc}
        0 & 1 & -1 & -1 & 1 & 0 & 0\\ 1 & -1 & 0 & 1 & 0 & 1 & 0\\ 0 & 0 & 1 & 0 & 0 & 0 & 1\\ 1 & 1 & 1 & 1 & 1 & 1 & 0\\ 1 & 0 & 0 & 0 & 0 & 1 & 0\\ 0 & 1 & 1 & 1 & -1 & -\frac{5}{2} & -2\\ 1 & 3 & 1 & 1 & 1 & -4 & -4
     \end{smallarray}\right].
  \]
  Based on this choice for $T$ and $S$, we can furthermore choose
  \[
     F_P = \begin{smallbmatrix}
        -7 & -9 & 0 & 5 & -\frac{37}{2} & \frac{37}{2}\\ -\frac{42}{5} & -\frac{34}{5} & \frac{4}{5} & \frac{28}{5} & -9 & 9\\ \frac{22}{5} & \frac{14}{5} & -\frac{9}{5} & -\frac{13}{5} & -1 & -2
     \end{smallbmatrix},\quad
     F_D = \begin{smallbmatrix}
         -7 & -6 & 4 & 16 & -10 & 9\\ -\frac{18}{5} & -3 & \frac{11}{5} & \frac{36}{5} & -\frac{13}{5} & 2\\ -\frac{2}{5} & 0 & \frac{4}{5} & \frac{4}{5} & -\frac{7}{5} & -1
     \end{smallbmatrix},\quad
     V = I_3.
     \]
  This results in the following QPDFF
  \[
      [S(ET+BF_D),S(AT+BF_P),SBV] = \left[
         \left[\begin{smallarray}{ccc|c|cc}
            \frac{1}{5} & 0 & -\frac{2}{5} & \frac{8}{5} & -\frac{24}{5} & 5\\\hline
            0 & 0 & 0 & 2 & -4 & 4\\ \hline
            0 & 0 & 0 & 0 & 0 & 0\\
            0 & 0 & 0 & 0 & 0 & 0\\\hline
            0 & 0 & 0 & 0 & 0 & 0\\ 0 & 0 & 0 & 0 & 0 & 0\\ 0 & 0 & 0 & 0 & 0 & 0
         \end{smallarray}\right],
         \left[\begin{smallarray}{ccc|c|cc}
            \frac{4}{5} & \frac{3}{5} & -\frac{3}{5} & \frac{4}{5} & 0 & -1\\\hline
            0 & 0 & 0 & 2 & -3 & 1\\\hline
            0 & 0 & 0 & 0 & \frac{13}{2} & \frac{1}{2}\\
            0 & 0 & 0 & 0 & -8 & 0\\\hline
            0 & 0 & 0 & 0 & 0 & 0\\ 0 & 0 & 0 & 0 & 0 & 0\\ 0 & 0 & 0 & 0 & 0 & 0
         \end{smallarray}
         \right],
         \begin{smallbmatrix}
            0 & 0 & 0\\\hline
            0 & 0 & 0\\\hline
            0 & 0 & 0\\ 0 & 0 & 0\\\hline
            1 & -1 & -1\\ 0 & 0 & 2\\ -1 & 2 & -3
         \end{smallbmatrix}
      \right].
  \]
\end{Example}

\begin{Proposition}[Uniqueness of~QPDFF]\label{Prop:QPDFF-unique}
Let $[E,A,B]\in\Sigma_{\ell,n,m}$ and $S_1,S_2\in\Gl_{\ell}(\R)$, $T_1,T_2\in\Gl_{n}(\R)$, $V_1, V_2\in\Gl_m(\R)$, $F_P^1, F_P^2, F_D^1, F_D^2\in\R^{m\times n}$ be such that, for $i=1,2$,
\[
   [E, A , B] \overset{S_i,T_i,V_i,F_P^i,F_D^i}{\cong_{PD}} [E_i, A_i, B_i] = \left[\begin{bmatrix} E_{11,i} & E_{12,i} & E_{13,i} \\ 0 & E_{22,i} & E_{23,i} \\ 0 & 0 & E_{33,i} \\ 0&0&0\end{bmatrix}, \begin{bmatrix} A_{11,i} & A_{12,i} & A_{13,i} \\ 0 & A_{22,i} & A_{23,i} \\ 0 & 0 & A_{33,i} \\ 0&0&0\end{bmatrix}, \begin{bmatrix} 0&0\\ 0&0\\ 0&0\\ 0&\hat B_i \end{bmatrix}\right],
\]
where $[E_i,A_i,B_i]$ is in~QPDFF~\eqref{eq:QPDFF}. Then the corresponding diagonal blocks (which have matching sizes according to Lemma~\ref{Lem:Wong-QPDFF}) are equivalent in the sense that there exist invertible matrices~$S_{ii}$, $i=1,2,3,4$, $T_{ii}$, $i=1,2,3$, and $V_{22}$ such that
\[
    E_{ii,1} = S_{ii} E_{ii,2} T_{ii},\quad i=1,2,3, \quad\text{and}\quad \widehat{B}_1 = S_{44} \widehat{B}_2 V_{22}.
\]
\end{Proposition}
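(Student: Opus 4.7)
The plan is to follow the same strategy as in the proof of Proposition~\ref{Prop:QPFF-uniqueness}: transitivity of $\cong_{PD}$ immediately yields $S\in\GL_\ell(\R)$, $T\in\GL_n(\R)$, $V\in\GL_m(\R)$, $F_P,F_D\in\R^{m\times n}$ with $[E_1,A_1,B_1]\overset{S,T,V,F_P,F_D}{\cong_{PD}}[E_2,A_2,B_2]$, and the whole task reduces to showing that $S$, $T$, $V$ must respect the QPDFF block partitioning, after which block-matrix multiplication reads off the desired equivalences. Since decoupling preserves the diagonal blocks and only modifies off-diagonal entries via a block upper triangular transformation with identity diagonal blocks, I would assume without loss of generality that both QPDFFs are decoupled, so that the explicit descriptions of the augmented Wong limits and images from Lemma~\ref{Lem:Wong-QPDFF} apply directly.

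First, I would derive the block structure of $T$. By Lemma~\ref{lem:Wong-PDfb}, $T$ maps $\cV^*_{[E_1,A_1,B_1]}\cap\cW^*_{[E_1,A_1,B_1]}$ bijectively onto $\cV^*_{[E_2,A_2,B_2]}\cap\cW^*_{[E_2,A_2,B_2]}$ and similarly for $\cV^*$; by Lemma~\ref{Lem:Wong-QPDFF}, these subspaces coincide on both sides with $\R^{n_1}\times\{0\}^{n_2+n_3}$ and $\R^{n_1+n_2}\times\{0\}^{n_3}$ respectively. Consequently $T$ preserves each of these two nested coordinate subspaces, hence is block upper triangular for the partitioning $n=n_1+n_2+n_3$, with invertible diagonal blocks $T_{11}\in\GL_{n_1}(\R)$, $T_{22}\in\GL_{n_2}(\R)$, $T_{33}\in\GL_{n_3}(\R)$.

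Next I would pin down the block structure of $S$. Combining $E_1=SE_2T+SB_2F_D$ and $B_1=SB_2V$ with the above Wong-subspace identities yields
\[
    S\bigl(E_2\bigl(\cV^*_{[E_2,A_2,B_2]}\cap\cW^*_{[E_2,A_2,B_2]}\bigr)+\im B_2\bigr)=E_1\bigl(\cV^*_{[E_1,A_1,B_1]}\cap\cW^*_{[E_1,A_1,B_1]}\bigr)+\im B_1,
\]
as well as the analogous identity with $\cV^*$ in place of $\cV^*\cap\cW^*$ and $S\im B_2=\im B_1$. By Lemma~\ref{Lem:Wong-QPDFF} these three subspaces are the nested coordinate subspaces
\[
    \{0\}^{\ell_1+\ell_2+\ell_3}\times\R^{m_2}\ \subsetneq\ \R^{\ell_1}\times\{0\}^{\ell_2+\ell_3}\times\R^{m_2}\ \subsetneq\ \R^{\ell_1+\ell_2}\times\{0\}^{\ell_3}\times\R^{m_2},
\]
each of which must thus be preserved by $S$. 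A direct block calculation forces $S$ into the shape
\[
    S=\begin{bmatrix} S_{11} & S_{12} & S_{13} & 0 \\ 0 & S_{22} & S_{23} & 0 \\ 0 & 0 & S_{33} & 0 \\ S_{41} & S_{42} & S_{43} & S_{44}\end{bmatrix},
\]
and invertibility of $S$ makes the diagonal blocks $S_{11}$, $S_{22}$, $S_{33}$ and $S_{44}$ all invertible.

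Finally, the claimed equivalences follow by routine block multiplication. Since $B_2$ has only its last row block nonzero, the same is true of $SB_2F_D$, so for $i=1,2,3$ the $(i,i)$ diagonal block of $E_1=SE_2T+SB_2F_D$ comes entirely from $SE_2T$, and block upper triangularity of $S$, $E_2$, and $T$ yields $E_{ii,1}=S_{ii}E_{ii,2}T_{ii}$. For the input, partitioning $V=\begin{smallbmatrix}V_{11}&V_{12}\\V_{21}&V_{22}\end{smallbmatrix}$ according to $m=m_1+m_2$, the identity $B_1=SB_2V$ combined with the structures of $S$ and $B_2$ yields $S_{44}\hat B_2 V_{21}=0$, hence $V_{21}=0$, and $\widehat B_1=S_{44}\widehat B_2 V_{22}$, with $V_{22}$ invertible by invertibility of $V$. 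The main technical obstacle is the clean derivation of the three $S$-invariant subspaces from the PD-feedback equivalence relation; all subsequent steps are then straightforward block-matrix bookkeeping.
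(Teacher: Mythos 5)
Your proof is correct and follows essentially the same route as the paper's: reduce to a single PD-equivalence between the two forms, use Lemma~\ref{lem:Wong-PDfb} together with Lemma~\ref{Lem:Wong-QPDFF} to force the block-triangular structures of $T$, $S$ and $V$, and then read off the diagonal-block relations by block multiplication. The only cosmetic difference is that you obtain the zero blocks of $S$ from its invariance of the nested subspaces $\im B\subseteq E(\cV^*\cap\cW^*)+\im B\subseteq E\cV^*+\im B$, whereas the paper extracts them directly from the matrix equations for $B$ and $E$ using full row rank of $E_{11,1}$ and invertibility of $E_{22,1}$ and $T_{22}$; both arguments are sound.
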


\begin{proof}
Without loss of generality we assume that $S_1=I_\ell$, $T_1=I_n$, $V_1 = I_m$ and $F_P^1 = F_D^1 = 0$.

\noindent
\emph{Step 1}: By Lemma~\ref{lem:Wong-PDfb} we have
\[
    \cV^*_{[E_1,A_1,B_1]} = T_2 \cV^*_{[E_2,A_2,B_2]},\quad \cW^*_{[E_1,A_1,B_1]} = T_2 \cW^*_{[E_2,A_2,B_2]},
\]
and from Lemma~\ref{Lem:Wong-QPDFF} we obtain
\[
    \cV^*_{[E_{i}, A_{i}, B_{i}]}\cap \cW^*_{[E_{i}, A_{i}, B_{i}]} = \R^{n_{1,i}}\times \{0\}^{n_{2,i}+n_{3,i}}
    \qquad
    \text{for $i=1,2$.}
\]
This implies $n_{1,1}=n_{1,2}$  and
\[
    T_2 = \begin{bmatrix} T_{11} &T_{12} &T_{13}\\ 0 & T_{22}&T_{23}\\ 0&T_{32}&T_{33}\end{bmatrix}\quad\text{for}\ T_{11}\in\Gl_{n_{1,1}}(\R), T_{22}\in\R^{n_{2,1}\times n_{2,2}}, T_{33}\in\R^{n_{3,1}\times n_{3,2}}
\]
and $T_{12}, T_{13}, T_{23}, T_{32}$ of appropriate sizes. Furthermore, Lemma~\ref{Lem:Wong-QPDFF} gives
\[
    \R^{n_{1,1}+n_{2,1}}\times\{0\}^{n_{3,1}} = \cV^*_{[E_1,A_1,B_1]} = T_2 \cV^*_{[E_2,A_2,B_2]} = T_2 (\R^{n_{1,2}+n_{2,2}}\times \{0\}^{n_{3,2}}),
\]
which, together with $n_{1,1}=n_{1,2}$, yields
$n_{2,1}=n_{2,2}$, $n_{3,1}=n_{3,2}$ and
\[
    T_{32}=0, \quad T_{22}\in\Gl_{n_{2,1}}(\R),\quad T_{33}\in\Gl_{n_{3,1}}(\R).
\]

\noindent
\emph{Step 2}: Partitioning
\[
    S_2 = \begin{bmatrix} S_{11} & S_{12} & S_{13} & S_{14}\\ S_{21} & S_{22} & S_{23} & S_{24}\\ S_{31} &S_{32}& S_{33} & S_{34}\\S_{41}&S_{42}&S_{43}&S_{44}\end{bmatrix}\quad\text{for}\ S_{11}\in\R^{\ell_{1,2}\times \ell_{1,1}}, S_{22}\in\R^{\ell_{2,2}\times \ell_{2,1}}, S_{33}\in\R^{\ell_{3,2}\times \ell_{3,1}}, S_{44}\in\R^{m_{2,2}\times m_{2,1}}
\]
and
\[
    V_2 = \begin{bmatrix} V_{11} & V_{12}\\ V_{21}&V_{22}\end{bmatrix}\quad\text{for}\ V_{11}\in\R^{m_{1,2}\times m_{1,1}}, V_{22}\in\R^{m_{2,2}\times m_{2,1}}
\]
and off-diagonal block matrices of appropriate sizes, the equation
 $S_2B_1 V_2=B_2$ in conjunction with
 $\hat S_4 := [S_{14}^\top, S_{24}^\top, S_{34}^\top]^\top$
 gives
\[
    \hat S_4 \hat B_1 V_{21} = 0,\ \hat S_4 \hat B_1 V_{22} = 0,\ S_{44} \hat B_1 V_{21} = 0,\ S_{44} \hat B_1 V_{22} = \hat B_2.
\]
Since $\hat B_1 [V_{21}, V_{22}]$ has full row rank, it follows that $S_{14}=0, S_{24}=0$ and $S_{34}=0$.  By Definition~\ref{def:QPDFF} we have $m_{2,1} = \rk B = m_{2,2}$ and hence also $m_{1,1} = m_{1,2}$. Since $S_2$ is invertible, this implies that $S_{44}\in\Gl_{m_{2,1}}(\R)$, thus $V_{21} = 0$, $V_{22}\in\Gl_{m_{2,1}}(\R)$ and, since $V_2$ is invertible, $V_{11}\in\Gl_{m_{1,1}}(\R)$.

The equation $S_2(E_1T_2 + B_1 F_P^2)=E_2$ yields
$\begin{smallbmatrix} S_{21}\\ S_{31}\end{smallbmatrix} E_{11,1} T_{11} = 0$ and the full row rank of~$E_{11,1}$ implies~$S_{21}=0$ and~$S_{31}=0$.
Since~$S_2$ is invertible, it follows that $\ell_{1,1}\le \ell_{1,2}$.
Reversing the roles of~$[E_1,A_1,B_1]$ and~~
$[E_2,A_2,B_2]$ gives $\ell_{1,1}\geq \ell_{1,2}$, whence~$\ell_{1,1}=\ell_{1,2}$. We further have the equation
\[
    S_{32} E_{22,1} T_{22} = 0
\]
which by invertibility of~$T_{22}$ and~$E_{22,1}$ gives that~$S_{32}=0$.
This finally implies $\ell_{2,1}=\ell_{2,2}=n_{2,1}=n_{2,2}$, $\ell_{3,1}=\ell_{3,2}$, $S_{22}\in\Gl_{\ell_{2,1}}(\R)$,  $S_{33}\in\Gl_{\ell_{3,1}}(\R)$,
and  hence the proof of the proposition is complete.
\end{proof}

\begin{Remark}[Geometric interpretation of QPDFF]
   Since the state-space transformations $T$ for the QPDFF and the QPFF can be chosen to be identical (the same subspaces are used to define it), it follows that the corresponding augmented Wong limits have the same control-theoretic geometric interpretation. The key geometric difference between the QPDFF and the QPFF is the reinterpretations of states in the QPFF which are completely controllable due to the external input (represented by the subspace $\im B$) as underdetermined (and hence completely controllable) variables in the QPDFF.
\end{Remark}

\section{Conclusion}

We have presented the novel concepts of quasi P-feedback and quasi PD-feedback forms for~DAE control systems which reveal the key structural properties of the control system under P(D)-feedback transformations. Furthermore, the forms are easily obtained via the augmented Wong-sequences, which additionally provides a geometric insight.

\printbibliography[notcategory=bibexclude]

\appendix

\section{Linear matrix equations}\label{app:ProofDecoupledQPFF}

In order to prove the claim of Proposition~\ref{Prop:decoupledQPFF} we recall two important results concerning the solvability of linear matrix equations.

\begin{Lemma}[{\cite[Lem.~4.14]{BergTren12}}, c.f.~\cite{Chu87}]\label{lem:twoEqs2genSylvester}
   Consider the the two matrix equations
   \begin{equation}\label{eq:twoMatrixEqs}
   \begin{aligned}
       0 &= \mathrm{E} + \mathrm{A} Y + Z \mathrm{D} \\
       0 &= \mathrm{F} + \mathrm{C} Y + Z \mathrm{B}
     \end{aligned}
   \end{equation}
   for some $\mathrm{A},\mathrm{C}\in\R^{m\times n}$, $\mathrm{B},\mathrm{D}\in\R^{p\times q}$, $\mathrm{E},\mathrm{F}\in\R^{m\times q}$. Assume that the pencil $s\mathrm{B}-\mathrm{D}$ has full polynomial column rank, i.e.\ there exists $\lambda\in\R$ such that $(\lambda \mathrm{B} - \mathrm{D})$ has a left inverse $(\lambda \mathrm{B} - \mathrm{D})^\dagger$. Then~\eqref{eq:twoMatrixEqs} is solvable, if the generalized Sylvester equation
   \[
       \mathrm{A} X \mathrm{B} - \mathrm{C} X \mathrm{D} = - \mathrm{E} + (\lambda \mathrm{E} - \mathrm{F}) (\lambda \mathrm{B} - \mathrm{D})^\dagger \mathrm{D}
   \]
   is solvable. \qed
\end{Lemma}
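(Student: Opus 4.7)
My plan is to establish sufficiency by constructing $(Y, Z)$ explicitly from a given solution $X$ of the generalized Sylvester equation. The first step is to reduce~\eqref{eq:twoMatrixEqs} to the equivalent pair consisting of its first equation together with the combination $\lambda\cdot(\text{second})-(\text{first})$, namely
\[
    (\lambda \mathrm{C} - \mathrm{A}) Y + Z(\lambda \mathrm{B} - \mathrm{D}) = \mathrm{E} - \lambda \mathrm{F}.
\]
Using the left-inverse relation $L(\lambda \mathrm{B} - \mathrm{D}) = I$ with $L := (\lambda \mathrm{B} - \mathrm{D})^\dagger$, this combined equation can be solved for $Z$ in terms of $Y$ up to a correction $Z_0$ satisfying $Z_0(\lambda \mathrm{B} - \mathrm{D}) = 0$. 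The key algebraic identity throughout is $\lambda L \mathrm{B} = I + L \mathrm{D}$, obtained by expanding $L(\lambda \mathrm{B} - \mathrm{D}) = I$.

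I then adopt the ansatz $Y = X\mathrm{B}$ and $Z = -\mathrm{C}X - (\lambda \mathrm{E} - \mathrm{F})L + N$, with a correction $N \in \R^{m\times p}$ to be determined. Substituting into the first equation and invoking the Sylvester hypothesis $\mathrm{A}X\mathrm{B} - \mathrm{C}X\mathrm{D} = -\mathrm{E} + (\lambda \mathrm{E} - \mathrm{F})L\mathrm{D}$ collapses the $X$-dependent terms and reduces that equation to $N\mathrm{D} = 0$. Substituting the same ansatz into the second equation then imposes $N\mathrm{B} = -\mathrm{F} + (\lambda \mathrm{E}-\mathrm{F})L\mathrm{B}$; the problem thus reduces to exhibiting an $N$ with
\[
  N[\mathrm{D},\,\mathrm{B}] = \bigl[0,\,-\mathrm{F} + (\lambda \mathrm{E}-\mathrm{F})L\mathrm{B}\bigr].
\]

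The main obstacle is verifying the solvability of this last matrix equation: the map $N \mapsto N[\mathrm{D},\,\mathrm{B}]$ need not be surjective, so compatibility of the prescribed data on $\im\mathrm{D}\cap\im\mathrm{B}$ must be established. My expectation is that the full-polynomial-column-rank assumption on $s\mathrm{B}-\mathrm{D}$, which forces $\rk[\mathrm{D},\,\mathrm{B}]\geq q$, together with the Sylvester equation, supplies exactly the compatibility data needed. Should the direct algebraic approach prove unwieldy, an alternative strategy is to first transform the pencil $s\mathrm{B}-\mathrm{D}$ into Kronecker canonical form---consisting only of underdetermined K-blocks by the full-column-rank hypothesis---so that~\eqref{eq:twoMatrixEqs} decouples into block-wise equations, each solvable explicitly from the corresponding block of the Sylvester equation. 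In either case, a final direct computation using the Sylvester relation at the crucial step will confirm that the constructed $(Y, Z)$ solves~\eqref{eq:twoMatrixEqs}.
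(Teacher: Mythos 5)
The paper does not actually prove this lemma---it is imported from \cite{BergTren12} (note that the statement carries its own end-of-proof symbol), so there is no in-paper argument to compare against; your attempt must stand on its own, and it does not close. You correctly reduce everything to the existence of an $N$ with $N\mathrm{D}=0$ and $N\mathrm{B}=G$, where $G:=-\mathrm{F}+(\lambda\mathrm{E}-\mathrm{F})L\mathrm{B}$ and $L:=(\lambda\mathrm{B}-\mathrm{D})^\dagger$, and you flag this as the remaining obstacle---but the obstacle is not removable with your ansatz. The equation $N[\mathrm{D},\mathrm{B}]=[0,G]$ is solvable iff $Gv_2=0$ whenever $\mathrm{D}v_1+\mathrm{B}v_2=0$, and this compatibility genuinely fails: take $\mathrm{B}=(1,0)^\top$, $\mathrm{D}=2\mathrm{B}$ and $\lambda=0$, so that $L\mathrm{B}=-\tfrac12$ and $G=-\tfrac12\mathrm{F}\neq 0$ for $\mathrm{F}\neq 0$, while $N\mathrm{D}=0$ forces $N\mathrm{B}=\tfrac12 N\mathrm{D}=0$. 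Hence the rigid choice $Y=X\mathrm{B}$ cannot work in general; a correction term in $Y$ is needed as well, and the compatibility analysis must then be redone from scratch. (Your fallback sketch also misdescribes the Kronecker form: full polynomial column rank \emph{excludes} the underdetermined blocks; what can remain are finite and infinite Jordan blocks together with the overdetermined blocks.)

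Worse, the same configuration shows that the statement as printed cannot be proved at all. Take $m=n=1$, $\mathrm{A}=2$, $\mathrm{C}=1$, $\mathrm{B}=(1,0)^\top$, $\mathrm{D}=(2,0)^\top$, $\mathrm{E}=\mathrm{F}=1$, $\lambda=0$, $L=(-\tfrac12,0)$. All hypotheses hold, the generalized Sylvester equation reads $0=-1+(-1)(-1)=0$ and is trivially solvable, yet subtracting twice the second equation of~\eqref{eq:twoMatrixEqs} from the first yields $0=-1$, so~\eqref{eq:twoMatrixEqs} is inconsistent. Carrying out the elimination honestly (form $\lambda\cdot(\text{second})-(\text{first})$, solve for $Z$ via $L$ up to a term annihilating $\lambda\mathrm{B}-\mathrm{D}$, substitute into the \emph{second} equation and use $\lambda L\mathrm{B}=I+L\mathrm{D}$) produces the right-hand side $-\mathrm{F}-(\mathrm{E}-\lambda\mathrm{F})L\mathrm{B}$ with the substitution $X=YL$, not $-\mathrm{E}+(\lambda\mathrm{E}-\mathrm{F})L\mathrm{D}$; in the example this gives the unsolvable equation $0=-\tfrac12$, consistent with the inconsistency of the pair. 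So the right-hand side in the statement appears to be mistranscribed from \cite[Lem.~4.14]{BergTren12}, and you should verify it against the source before proceeding. Even with the corrected right-hand side a nontrivial step remains that your plan does not address: a solution $X$ of the Sylvester equation need not lie in the range of $Y\mapsto YL$ when $p>q$, and bridging that gap is precisely where the correction terms you anticipate must enter.
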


\begin{Remark}\label{rem:transposeTwoMatrixEqs}
   By considering the transposed version of~\eqref{eq:twoMatrixEqs},
    the same solvability reduction to a generalized Sylvester equation is feasible, provided that the matrix pencil $s\mathrm{C}-\mathrm{A}$ has full polynomial row rank. In fact, in this case~\eqref{eq:twoMatrixEqs} is solvable, if
   \[
       \mathrm{A} X \mathrm{B} - \mathrm{C} X \mathrm{D} = - \mathrm{F} + \mathrm{C} (\lambda \mathrm{C} - \mathrm{A})^\dagger (\lambda \mathrm{F} - \mathrm{E})
   \]
   is solvable.
\end{Remark}

\begin{Lemma}[{\cite[Thm.~2]{HernGass89}}]\label{lem:Sylvester-equation}
   Consider the generalized Sylvester equation
   \begin{equation}\label{eq:genSylvester}
       \mathrm{A} X \mathrm{B} - \mathrm{C} X \mathrm{D} = \mathrm{E}
   \end{equation}
   for some matrices $\mathrm{A},\mathrm{C}\in\R^{m\times n}$, $\mathrm{B},\mathrm{D}\in\R^{p\times q}$, $\mathrm{E}\in\R^{m\times q}$. Assume that the matrix pencils $s\mathrm{C}-\mathrm{A}$ and $s\mathrm{B}-\mathrm{D}$ have full polynomial rank and assume furthermore that there is no $\lambda\in\C\cup\{\infty\}$ such that both matrices $\lambda \mathrm{C} - \mathrm{A}$ and $\lambda \mathrm{B} - \mathrm{D}$ have a simultaneous rank drop\footnote{Here we use the convention that $\rank(\infty \mathrm{M} - \mathrm{N}) = \rank\mathrm{M}.$}. Then the generalized Sylvester equation~\eqref{eq:genSylvester} has a solution $X\in\R^{n\times p}.$\qed
\end{Lemma}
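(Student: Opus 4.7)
The plan is to reduce the equation to block form via the (quasi-)Kronecker canonical form of both pencils, solve each resulting block equation separately, and assemble a global solution. First I would pick invertible matrices $P_1,Q_1,P_2,Q_2$ such that $P_1(s\mathrm{C}-\mathrm{A})Q_1$ and $P_2(s\mathrm{B}-\mathrm{D})Q_2$ are in Kronecker canonical form. Under the substitution $X = Q_1 \widetilde X P_2^{-1}$, $\widetilde{\mathrm{E}} = P_1 \mathrm{E} Q_2$, the equation $\mathrm{A} X \mathrm{B} - \mathrm{C} X \mathrm{D} = \mathrm{E}$ is equivalent to $\widetilde{\mathrm{A}} \widetilde{X} \widetilde{\mathrm{B}} - \widetilde{\mathrm{C}} \widetilde{X} \widetilde{\mathrm{D}} = \widetilde{\mathrm{E}}$ for the transformed (block-diagonal) pencils. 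The full-polynomial-rank hypotheses rule out the ``wrong-direction'' singular blocks: $s\mathrm{C}-\mathrm{A}$ contains no block of type $L_k^\top$ (if it is tall) respectively no $L_k$-block (if wide), and analogously for $s\mathrm{B}-\mathrm{D}$. Hence only those block types compatible with full polynomial rank occur.

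Second, the block-diagonal structure of the two pencils decomposes $\widetilde X$ and $\widetilde{\mathrm{E}}$ into blocks $\widetilde X_{ij}$, $\widetilde{\mathrm{E}}_{ij}$, and the matrix equation splits into an independent generalized Sylvester equation for each pair $(i,j)$ of KCF blocks. For each such pair I would establish solvability as follows: when both blocks are regular (Jordan blocks at finite eigenvalues or nilpotent blocks at infinity), the no-common-rank-drop assumption translates precisely into disjointness of the spectra $\{\lambda \in \C\cup\{\infty\} : \text{rank drops}\}$, and the classical Sylvester/Stein theorem yields a (unique) solution block-wise. When one block is regular and the other is a singular $\epsilon$- or $\eta$-block, only finitely many rank drops come from the regular side, while the singular block has rank drops at every $\lambda \in \C\cup\{\infty\}$; here the assumption forces the regular side to be absent or trivial, or one solves the equation directly using the shift structure of $K_k$ and $L_k$.

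Third, the genuinely singular-singular case (an $\epsilon$-block paired with an $\eta$-block, or two $\epsilon$-blocks) is expected to be the main obstacle, because both pencils drop rank at every $\lambda$ and the spectral separation argument is unavailable. I would handle this by exploiting the explicit shift structure of $K_k$ and $L_k$: writing the equation entry-wise gives a triangular recurrence in the entries of $\widetilde X_{ij}$ whose solvability follows by induction, using that in the admissible combinations one of the shift operators is injective on each row/column. In particular, one can verify that the condition ``no simultaneous rank drop at any $\lambda\in\C\cup\{\infty\}$'' is vacuously satisfied between blocks of matching singular types only when the involved dimensions allow a solution to be constructed directly.

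Finally, gathering the per-block solutions $\widetilde X_{ij}$ into a block matrix $\widetilde X$ and setting $X = Q_1 \widetilde X P_2^{-1}$ produces a solution of the original equation. An alternative route, which avoids the case analysis but relies on heavier machinery, would be to prove the Roth-type criterion that~\eqref{eq:genSylvester} is solvable iff the augmented pencils
\[
   \begin{bmatrix} s\mathrm{C}-\mathrm{A} & \mathrm{E} \\ 0 & s\mathrm{B}-\mathrm{D} \end{bmatrix} \ \text{and}\ \begin{bmatrix} s\mathrm{C}-\mathrm{A} & 0 \\ 0 & s\mathrm{B}-\mathrm{D} \end{bmatrix}
\]
are strictly equivalent, and then verify this equivalence under the hypotheses; the no-common-rank-drop assumption is exactly what ensures the Kronecker invariants of the two augmented pencils coincide, giving the desired equivalence.
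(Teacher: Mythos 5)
The paper offers no proof of this lemma at all---it is quoted from~\cite{HernGass89} and closed immediately---so the only thing to measure your argument against is the statement itself. Your overall plan (bring both pencils to Kronecker canonical form, split the equation into independent block equations, solve pair by pair) is the natural one, and your treatment of the regular--regular pairs (disjoint spectra, classical generalized Sylvester theory) and of the mixed regular--singular pairs (triangular recursions along the shift structure of $K_k$ and $L_k$) can indeed be carried out. But the step you yourself flag as ``the main obstacle''---the singular--singular pairs---is a genuine gap, and it cannot be closed as you propose: the no-simultaneous-rank-drop hypothesis is \emph{vacuous} for such pairs, since the blocks $sK_k-L_k$ and $sK_k^\top-L_k^\top$ have full row (resp.\ column) rank for every $\lambda\in\C\cup\{\infty\}$, so the hypothesis places no restriction whatsoever on how two singular blocks may be paired, contrary to your claim that it ``forces'' the dimensions to be compatible. (You also state the exclusion backwards: a \emph{tall} pencil of full polynomial rank excludes the $L_k$-blocks and may still contain $L_k^\top$-blocks, and vice versa for a wide pencil.)

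In fact the statement as printed is false for certain singular--singular pairings, so no proof along your lines (or any other) can succeed without additional shape hypotheses. Take $\mathrm{A}=L_3=\begin{smallbmatrix}1&0&0\\0&1&0\end{smallbmatrix}$, $\mathrm{C}=K_3=\begin{smallbmatrix}0&1&0\\0&0&1\end{smallbmatrix}$, $\mathrm{B}=K_2=[0,\,1]$, $\mathrm{D}=L_2=[1,\,0]$. Both pencils have full polynomial rank, and $\rank(\lambda\mathrm{C}-\mathrm{A})=2$, $\rank(\lambda\mathrm{B}-\mathrm{D})=1$ for every $\lambda\in\C\cup\{\infty\}$, so there is no rank drop at all, let alone a simultaneous one. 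Yet for $X=(x_1,x_2,x_3)^\top\in\R^{3\times1}$ one computes $\mathrm{A}X\mathrm{B}-\mathrm{C}X\mathrm{D}=\begin{smallbmatrix}-x_2 & x_1\\ -x_3 & x_2\end{smallbmatrix}$, whose $(1,1)$ and $(2,2)$ entries always sum to zero; hence $\mathrm{E}=\begin{smallbmatrix}0&0\\0&1\end{smallbmatrix}$ is not attainable. The cited theorem of Hern\'andez and Gass\'o must therefore carry restrictions beyond those reproduced here; note that in every application inside the proof of Proposition~\ref{Prop:decoupledQPFF} only the benign combinations occur (a wide pencil with no rank drops anywhere paired against a regular or tall full-column-rank one), and for exactly these your block recursions do go through. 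The same counterexample also defeats your alternative Roth-type route, since there the two augmented pencils are not strictly equivalent.
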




\end{document}